\newcommand{\calV}{\mathcal{V}}
\newcommand{\calH}{\mathcal{H}}
\newcommand{\calM}{\mathcal{M}}
\newcommand{\rank}{\mathrm{rank}\:}
\newcommand{\Gr}{\mathrm{Gr}}
\newcommand{\PP}{\mathbb{P}}
\newcommand{\RR}{\mathbb{R}}
\newcommand{\CC}{\mathbb{C}}
\newcommand{\NN}{\mathbb{N}}
\newcommand{\ZZ}{\mathbb{Z}}
\newcommand{\Ca}{\textit{\textbf{C}}}
\newcommand{\Pck}{\mathcal{P}_{\textit{\textbf c},k}}
\newcommand{\Mck}{\mathcal{M}_{\textit{\textbf c},k}}
\newcommand{\MCk}{\mathcal{M}_{\Ca,k}}
\newcommand{\Vck}{\mathcal{V}_{\textit{\textbf c},k}}
\newcommand{\Hck}{\mathcal{H}_{\textit{\textbf c},k}}
\newcommand{\Gck}
{\overline{\Gamma}_{\textit{\textbf c},k}}
\newcommand{\Phick}{\Phi_{\textit{\textbf c},k}}
\newcommand{\lck}{\ell_{\textit{\textbf c},k}}
\newcommand{\uck}{\upsilon_{\textit{\textbf c},k}}
\newcommand{\ca}{\textit{\textbf c}}
\theoremstyle{plain}
\newtheorem{theorem}{Theorem}[section] 
\newtheorem{proposition}[theorem]{Proposition}
\newtheorem{lemma}[theorem]{Lemma}
\newtheorem{corollary}[theorem]{Corollary}
\theoremstyle{definition}
\newtheorem{definition}[theorem]{Definition} 
\theoremstyle{plain}
\newtheorem{example}[theorem]{Example}
\newtheorem{remark}[theorem]{Remark}
\newtheorem{introtheorem}{Theorem} 
\newtheorem{introproposition}[introtheorem]{Proposition}
\begin{document}

\title[Projections of Higher Dimensional Subspaces]{Projections of Higher Dimensional Subspaces and Generalized Multiview Varieties}

\thispagestyle{empty}
\date{}

\author{Felix Rydell}

\begin{abstract} We present a generalization of multiview varieties as closures of images obtained by projecting subspaces of a given dimension onto several views, from the photographic and geometric points of view. Motivated by applications in Computer Vision for triangulation of world features, we investigate when the associated projection map is generically injective; an essential requirement for successful triangulation. We give a complete characterization of this property by determining two formulae for the dimensions of these varieties. Similarly, we describe for which center arrangements calibration of camera parameters is possible. We explore when the multiview variety is naturally isomorphic to its associated blowup. In the case of generic centers, we give a precise formula for when this occurs. 
\end{abstract}

\maketitle

\setcounter{tocdepth}{2}

\pagenumbering{arabic}


\section*{Introduction} 
Computer Vision is the study of how computers can understand images and videos as well as or better than humans do. A fundamental problem within Computer Vision is Structure-from-Motion, which aims to create 3D computer models from 2D images captured by cameras with unknown parameters. This technique finds applications in various domains, including autonomous vehicles \cite{song2015high}, city modelling \cite{agarwal2010reconstructing}, geosciences \cite{carrivick2016structure} and object and human modeling for movies and video games. Commonly, given a set of $n$ 2D images, this process starts by identifying matching features, called \textit{correspondences}, across the images that are recognizable as originating from the same 3D feature. These correspondences are then utilized for camera \textit{calibration}, estimating the camera parameters, and subsequently, the original 3D features are obtained through \textit{triangulation}, which finds the 3D feature that minimizes the reprojection error. See the textbooks \cite{Hartley2004,szeliski2022computer} for more details.

The interplay between Computer Vision and Algebraic Geometry, known as \textit{Algebraic Vision}, arises naturally as a consequence of the fact that cameras are represented as rational maps between complex projective spaces and image correspondences typically involve algebraic objects such as points, lines, and conics. Central to the discussion are \textit{varieties}, which in this paper refers to Zariski closed sets. The study of multiview varieties, which form a key component of Algebraic Vision, dates back to 1997 with the pioneering work by Heyden and Åström \cite{heyden1997algebraic}. Given a camera arrangement $\textit{\textbf{C}}$ comprising of full rank $3\times 4$ matrices $C_1, \ldots, C_n$, that work introduced the \textit{joint image} map
\begin{align}\label{eq: Map Point}\begin{aligned}
    \Phi_{\Ca}: \PP^3&\dashrightarrow (\PP^2)^n,\\
    X& \mapsto (C_1X,\ldots,C_nX).
\end{aligned}
\end{align}
In the context of this paper, a \textit{camera matrix} is simply a full rank matrix. The Zariski closure of the image of $\Phi_{\Ca}$ is referred to as the \textit{multiview variety}, denoted $\mathcal{M}_{\textit{\textbf{C}}}$.

A plethora of literature has explored various generalizations of linear projections of points from projective spaces, and we present a selection of notable contributions in this paragraph. Shashua and Wolf \cite{wolf2002projection} examine projections $\mathbb{P}^N \dashrightarrow \mathbb{P}^2$ for values of $N$ ranging from 3 to 6, focusing on the analysis of dynamic scenes. Work by Hartley and Vidal \cite{hartley2008perspective} models non-rigid structure and motion under perspectitve projection as projects $\PP^{3k}\dashrightarrow \PP^2$. Rydell et al. \cite{rydell2023theoretical}, in their investigation of triangulation that preserves incidence relations, consider projections $\mathbb{P}^2 \dashrightarrow \mathbb{P}^1$ and $\mathbb{P}^1 \dashrightarrow \mathbb{P}^1$, which also appear in other work \cite{quan1997affine,faugeras1998self}. In addition, $\PP^3\dashrightarrow\PP^1$ is commonly used to model radial cameras \cite{thirthala2005multi,thirthala2005radial,hruby2023four}. Studies on linear rational maps from $\mathbb{P}^N$, where $N$ is any natural number, have been conducted by Li \cite{li2018images}, and Bertolini et al. \cite{bertolini2022smooth}, who study associated critical loci, i.e. where projective reconstruction fails. Further, in this general setting, constraints for camera calibration can be obtained from Grassmann tensors as introduced by 
Hartley and Schaffalitzky \cite{hartley2009reconstruction}, whose work was recently seen in a new, more algebraic light \cite{ito2020projective}. Regarding the projection of lines, previous work \cite{faugeras1995geometry,Kileel_Thesis} studied three-view geometry. The general framework was presented in \cite{breiding2023line,breiding2023LMI}. For a line $L$ spanned by $X_0$ and $X_1$ in $\PP^3$, define the projection $C_i\cdot L$ as the line spanned by $C_iX_0$ and $C_iX_1$ in $\PP^2$. Denoting the Grassmannian of projective $k$-dimensional (projective) subspaces of $\PP^N$ by $\Gr(k,\PP^N)$, we get
\begin{align}
\begin{aligned}\label{eq: Map Line}
\Upsilon_{\Ca}: \Gr(1,\PP^3)&\dashrightarrow \Gr(1,\PP^2)^n,\\
L& \mapsto (C_1\cdot L,\ldots,C_n\cdot L).
\end{aligned}
\end{align}
The Zariski closure of the image of this map is referred to as the \textit{line multiview variety}, denoted $\mathcal{L}_{\Ca}$. 

This paper extends point and line multiview varieties by considering camera arrangements $\Ca$ consisting of $(h_i+1)\times (N+1)$ camera matrices $C_i$. In \Cref{s: GMV}, we define given a camera matrix $C$ with center $c$ a rational map $P\mapsto C\cdot P$ sending a $k$-dimensional subspace $P$ spanned by $X_0,\ldots,X_k$ to the $k$-dimensional subspace spanned by $CX_0,\ldots,CX_k$. The maps \eqref{eq: Map Point} and \eqref{eq: Map Line} naturally generalize to
\begin{align}
\begin{aligned}\label{eq: Map Joint}
\Phi_{\Ca,k}: \Gr(k,\PP^{N})&\dashrightarrow \Gr(k,\PP^{h_1})\times \cdots \times \Gr(k,\PP^{h_m}),\\
P& \mapsto (C_1\cdot P,\ldots,C_n\cdot P).
\end{aligned}
\end{align}
We call the Zariski closure of this map the \textit{(photographic) multiview variety} $\mathcal M_{\Ca,k}$. We may informally think of the multiview variety as the set of all possible simultaneous pictures taken of $k$-planes by $\Ca$. For many purposes, one can equivalently study \textit{geometric} multiview varieties, as opposed to photographic ones. These describe the geometry of the light rays, without specifying how those rays are captured by a camera into an image. A \textit{geometric camera} is a subspace $c$ in $\PP^N$, which defines for appropriate $k$ a rational map $\Gr(k,\PP^N)\dashrightarrow \Gr(\dim c+k+1,\PP^N)$ via $P\mapsto c\vee P$, where $\vee$ denotes the join of subspaces. Given a \textit{center arrangement} $\ca$ consisting of subspaces $c_i$ of $\PP^N$ with $\dim c_i\le N-k-1$, the map \eqref{eq: Map Joint} can, from the geometric point of view, be rewritten as 
\begin{align}
\begin{aligned}
\Phi_{\textit{\textbf c},k}: \Gr(k,\PP^N)&\dashrightarrow \Gr(\dim c_1+k+1,\PP^{N})\times \cdots \times \Gr(\dim c_n+k+1,\PP^{N}),\\
P& \mapsto (c_1\vee P,\ldots,c_n\vee P).
\end{aligned}
\end{align}
We call the Zariski closure of the image of $\Phi_{\textit{\textbf c},k}$ the \textit{(geometric) multiview variety} $\mathcal M_{\textit{\textbf c},k}$. If $\ca$ is the center arrangement of $\Ca$, then $\mathcal{M}_{\Ca,k}$ is linearly isomorphic to $\mathcal{M}_{\textit{\textbf c},k}$. This is stated in \Cref{le: lin isomo} and in the proof we construct an explicit isomorphism. In \Cref{s: Geo Int Joins}, we state and prove Linear Algebra results on intersections and joint of projective subspaces for the convenience of the reader. The main objective of this paper can be  formulated as the study of the \textit{fiber indices} 
\begin{align}\begin{aligned}
   \ell_{\textit{\textbf c},k}&:=\min_{(H_1,\ldots,H_n)\in \mathcal M_{\textit{\textbf c},k}} \dim H_{[n]},\\
     \upsilon_{\textit{\textbf c},k}&:=\max_{(H_1,\ldots,H_n)\in \mathcal M_{\textit{\textbf c},k}} \dim H_{[n]}.
\end{aligned}
\end{align}
In \Cref{s: Tri}, we determine $\ell_{\ca,k}$ for any camera arrangement $\ca$. In order to motivate this problem, let us first recall the problem of triangulation. Given a tuple of (noisy) data of an image tuple $\widetilde{p}=(\widetilde{p}_1,\ldots,\widetilde{p}_n)$, where $\widetilde{p}_i\in \Gr(k,\PP^{h_i})$, the \textit{triangulation problem} is to find the unique $k$-dimensional subspace $P$ that best describes the image tuple. In practice, this is done minimzing the so-called reprojection error in a choice of affine patch; see \cite{beardsley1994navigation,beardsley1997sequential,stewenius2005hard}. To do this, we find an approximation $p$ of $\widetilde{p}$ that lies on the multiview variety $\mathcal M_{\Ca,k}$, and then find a unique $k$-plane in the intersection of the back-projected planes of $p$. The algebraic complexity of triangulation has been studied in many different settings \cite{harris2018chern,EDDegree_point,rydell2023theoretical,connelly2023algebra,duff2024metric}. For the second part to be possible, $\Phi_{\Ca,k}$ must be generically injective. This occurs if and only if the dimension of $\mathcal M_{\Ca,k}$ equals the dimension of the domain, i.e. $(k+1)(N-k)$. We observe in \Cref{prop: dim} that $\dim \mathcal M_{\ca,k}=(k+1)(N-\lck)$. We therefore say that $\Mck$ is \textit{triangulable} if $\lck=k$. To state our results, we need more notation. By $c_I$, we denote the intersection of centers $c_i$ for $i\in I$. Write $\lambda\vdash n$ for a partition $\lambda$ of $[n]:=\{1,\ldots,n\}$. Define also
\begin{align}\begin{aligned}
    M_{\ca,k}(h):=\{m\in \NN^n:& \sum_{i=1}^n m_i=h, \textnormal{ and for each } I\subseteq [n],\\
    &\sum_{i\in I}m_i\le (k+1)(N-\dim c_I-k-1)\},
\end{aligned}
\end{align}
where $\NN=\ZZ_{\ge 0}$ denotes the non-negative integers. Our main results on dimensions, \Cref{cor: dim final,thm: dim other}, give the following characterizations.
\begin{introtheorem}\label{introthm: dim} Let $\ca$ be any center arrangement. Then 
    \begin{align}\begin{aligned}
    \dim \Mck=(k+1)\min_{\lambda\vdash n}\Big\{\sum_{I\in \lambda}\big(N-\dim c_I-k-1\big)\Big\}=\max_{h:\; M_{\ca,k}(h)\neq \emptyset}\;h.
\end{aligned}
\end{align}
\end{introtheorem}
If $\ca$ is the center arrangement of $\Ca$, then $\dim \MCk=\dim \Mck$. Therefore this formula also computes dimensions of photographic multiview varieties. Since determining $\dim \Mck$ and $\lck$ are equivalent problems, this theorem also computes $\lck$. The dimension formula in \Cref{introthm: dim} that uses $M_{\ca,k}$ is a generalization of the work by Li \cite{li2018images}, who stated this description in the case $k=0$. In \Cref{s: Pseudo-Dis}, we introduce \textit{pseudo-disjoint} center arrangements, whose intersections of centers do not imply constraints on the multiview variety. This property relies on specifying a value of $k$. In terms of equations, we characterize this property in \Cref{prop: Pseudo-Disjoint}.
\begin{introproposition}
The center arrangement $\ca$ is pseudo-disjoint with respect to $k$ if and only if for each $\emptyset \neq I\subseteq [n]$ such that $c_I\neq \emptyset $, we have
    \begin{align}
        \sum_{i\in I} \dim c_i\ge (|I|-1)(N-k-1) + \dim c_{I}.
    \end{align}
If $\textit{\textbf c}$ is generic, then this condition is satisfied for every $k\ge 0$.
\end{introproposition}

\Cref{introthm: dim} simplifies greatly under the assumption that $\ca$ is pseudo-disjoint with respect to $k$. Indeed, by \Cref{cor: dim final}, the dimension becomes the expected number:
\begin{introtheorem} If $\ca$ is pseudo-disjoint with respect to $k$, then
    \begin{align}
    \dim \Mck=(k+1)\min\Big\{(N-k),\sum_{i=1}^n\big(h_i-k\big)\Big\}.
\end{align}
\end{introtheorem}

In \Cref{s: Prop}, we explore proper multiview varieties. These are varieties for which calibration is possible. In symbols, $\MCk$ is \textit{proper} if
\begin{align}
    \mathcal M_{\Ca,k}\subsetneq\Gr(k,\PP^{h_1})\times \cdots \times \Gr(k,\PP^{h_n}).
\end{align}
If $\MCk$ is non-proper, then any data of an image correspondence $\widetilde{p}$ implies no conditions of $\Ca$, because set-theoretically, there are no constraints on $\mathcal M_{\Ca,k}$. This property is described as follows in \Cref{prop: Spec}.
\begin{introproposition} $\MCk$ is non-proper if and only if the center arrangement $\ca$ is pseudo-disjoint with respect to $k$ and
\begin{align}
    N-\sum_{i=1}^n(h_i-k)\ge k.
\end{align}    
\end{introproposition}
In \Cref{s: Super-Tri}, we introduce and study \textit{super-triangulability} for multiview varieties, which refers to the property that every tuple $H\in \mathcal M_{\textit{\textbf c},k}$ has that $H_i$ meet precisely in $k$ dimensions. In other words, $\uck=k$. By \Cref{prop: uck isom}, this implies that the multiview variety is isomorphic to its associated multiview blowup. This property is often used in theoretical studies of Euclidean distance degrees \cite{EDDegree_point,rydell2023theoretical}. We investigate this property for generic center arrangements $\ca$. Write
\begin{align}\begin{aligned}
  \chi_{\ca,k} &:= \frac{N-1-\sum_{i=1}^n(h_i-k)}{2},\\
    \tau_{\ca,k} &:= \Big\lfloor \chi_{\ca,k} +\sqrt{(k+1)(N-k)+(\chi_{\ca,k}-k)^2}\Big\rfloor,
\end{aligned}
\end{align}
where, $\lfloor \alpha \rfloor$ is the largest integer smaller than or equal to $\alpha$. \Cref{thm: uck} describes super-triangulability by the formula below. 
\begin{introtheorem} Let $\ca$ be a generic center arrangement. Then 
\begin{align}
     \upsilon_{\ca,k}=\min\Big\{\tau_{\ca,k},\dim c_1+k+1,\ldots,\dim c_n+k+1\Big\}. 
\end{align}
\end{introtheorem}

\bigskip

\paragraph{\textbf{Acknowledgements.}} The author was supported by the Knut and Alice Wallenberg Foundation within their WASP (Wallenberg AI, Autonomous
Systems and Software Program) AI/Math initiative. The author would like to thank Kathlén Kohn for countless helpful comments and discussions.


\section{Generalized Multiview Varieties}\label{s: GMV} \setcounter{equation}{0}
\numberwithin{equation}{section} This section is divided into two parts. In \Cref{ss: GeoMV}, we define geometric multiview varieties in full generality, which are the main focus of the paper. In \Cref{ss: PhoMV}, we define photographic multiview varieties in full generality, and in particular show that they are naturally linearly isomorphic to corresponding geometric multiview varieties. 

Throughout this paper, we use the following notation. Fix $N\in \NN:=\{0,1,2,\ldots\}$. A \textit{$k$-plane} is a $k$-dimensional subspace of $\PP^N$, the $N$-dimensional complex projective space. We write $\Gr(k,\PP^N)$ for the Grassmannian; the set of $k$-planes in $\PP^N$. $0$-planes are therefore points, $1$-planes are lines and $2$-planes are planes. Let $V$ and $W$ be two subspaces of $\PP^N$. $V\vee W$ denotes their \textit{join}, i.e. the subspace spanned by the vectors of $V$ and $W$, and $V\wedge W$ denotes their intersection. Let $V_1,\ldots,V_n$ be a subspaces of $\PP^N$. We adopt the notation
\begin{align}
    V_I:=\wedge_{i\in I}V_i.
\end{align}
In particular, writing $[n]=\{1,\ldots,n\}$, $V_{[n]}$ is the intersection of all $V_i$. We use the convention that $\dim \emptyset =-1$. 

\subsection{Geometric Multiview Varieties}\label{ss: GeoMV} The geometric approach to defining multiview varieties is based on back-projected planes \cite{ponce2017congruences}, referred to as \textit{lines of sight} in \cite{wolf2002projection}. Assume $N,h\in \NN$ and $N\ge h$. A full rank matrix $C$ is called a \textit{camera matrix}. A $(h+1)\times (N+1)$ camera matrix $C$ induces a rational map $C:\PP^N\dashrightarrow\PP^h$ sending $X$ to $CX$. For $k$-planes $P$ spanned by $X_0,\ldots,X_k$, we define $C\cdot P$ to be the $k$-plane spanned by $CX_0,\ldots,CX_k$ in $\PP^h$. This map is independent of the choice of spanning vectors and is well-defined precisely for $k$-planes $P$ that do not meet $c:=\ker\;C$, the \textit{center} of $C$. Therefore, throughout this paper, we assume that $\dim c\le N-k-1$, for any center $c$. Let $p$ be a $k$-plane in $\PP^h$ spanned by $x_0,\ldots,x_k$. The \textit{back-projected plane} $H$ of $p$ is the smallest subspace containing all $k$-planes in $\PP^N$ whose projection by $C$ is $p$. Denoting the Zariski closure of a set $U$ by $\overline{U}$, we give two alternative descriptions as follows. 

\begin{lemma}\label{le: back-proj} In the setting above,
\begin{align}\begin{aligned}\label{eq: back-proj}
H=&\overline{\{X\in \PP^N: X\textnormal{ does not meet }c, \textnormal{ and }  C X\in p\}}\\
=&\{X\in \PP^N: CX,x_0,\ldots,x_k \textnormal{ are linearly dependent}\}.
\end{aligned}
\end{align}
\end{lemma}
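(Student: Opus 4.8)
The plan is to lift everything to the underlying vector spaces and recognize all three sets as one and the same projective linear subspace. Write $\widetilde C\colon\CC^{N+1}\to\CC^{h+1}$ for the surjective linear map represented by $C$, so that $K:=\ker\widetilde C$ has $\dim K=N-h$ and $c=\Ps(K)$; since $x_0,\dots,x_k$ span the $k$-plane $p$, the subspace $\widetilde p:=\SPAN(x_0,\dots,x_k)\subseteq\CC^{h+1}$ has dimension $k+1$ and $x_0,\dots,x_k$ are linearly independent. Set $\widetilde H:=\widetilde C^{-1}(\widetilde p)$ and $H_0:=\Ps(\widetilde H)$. Applying rank--nullity to the surjection $\widetilde H\twoheadrightarrow\widetilde p$, whose kernel is $K$, gives $\dim\widetilde H=(k+1)+(N-h)$, so $H_0$ is a $(k+N-h)$-plane, and since $K\subseteq\widetilde H$ it contains $c$.

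First I would show that the third expression in \eqref{eq: back-proj} equals $H_0$. If $X\in c$ then $CX=0$, so $CX,x_0,\dots,x_k$ are trivially linearly dependent; if $X\notin c$ then $CX\neq 0$, and because $x_0,\dots,x_k$ are linearly independent, $CX,x_0,\dots,x_k$ are linearly dependent precisely when $CX\in\widetilde p$, i.e.\ when $X\in H_0$. Hence the third set is $H_0$. Next I would note that the set defining the closure in the middle expression is exactly $H_0\setminus c$: requiring that $X$ not meet $c$ and that $CX\in p$ says precisely that a lift of $X$ lies in $\widetilde C^{-1}(\widetilde p)=\widetilde H$ but not in $K$. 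Since $\dim c=N-h-1<k+N-h=\dim H_0$, the center $c$ is a proper linear subspace of the irreducible variety $H_0$, so $H_0\setminus c$ is nonempty and Zariski dense in $H_0$; its closure is therefore $H_0$, which settles the equality of the first two expressions.

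It remains to identify $H$ --- the smallest subspace containing every $k$-plane $P$ with $C\cdot P=p$ --- with $H_0$. The key point is a description of the admissible $k$-planes: a $k$-plane $P=\Ps(\widetilde P)$ satisfies $C\cdot P=p$ (which in particular forces $P$ not to meet $c$, i.e.\ $\widetilde P\cap K=0$) if and only if $\widetilde P$ is a $(k+1)$-dimensional complement of $K$ inside $\widetilde H$. Indeed, $C\cdot P=p$ means $\widetilde C(\widetilde P)=\widetilde p$, which forces $\widetilde P\subseteq\widetilde H$; conversely, if $\widetilde P\subseteq\widetilde H$ has dimension $k+1$ and meets $K$ trivially, then $\widetilde C$ is injective on $\widetilde P$, so $\widetilde C(\widetilde P)$ is a $(k+1)$-dimensional subspace of $\widetilde C(\widetilde H)=\widetilde p$, hence equals $\widetilde p$. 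Such complements exist, so admissible $P$ exist, and each lies in $H_0$; thus $H\subseteq H_0$. For the reverse inclusion I would check that the union of these complements spans $\widetilde H$: every vector of $\widetilde H\setminus K$ lies in some complement of $K$ in $\widetilde H$ (extend its span to such a complement), and $\widetilde H\setminus K$ spans $\widetilde H$ because a vector space over an infinite field is not the union of two proper subspaces. Hence the join of all admissible $P$ is $H_0$, so $H_0\subseteq H$, and equality follows.

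I expect the last step to be the only genuine obstacle: one must ensure the admissible $k$-planes both exist and are numerous enough to span $H_0$ rather than being confined to a proper subspace of it, which is exactly where the ``complement of $K$ in $\widetilde H$'' picture is needed. Everything else is rank--nullity bookkeeping together with the definition of linear (in)dependence, and the standing hypothesis $\dim c\le N-k-1$ is what keeps all the dimension counts nonnegative and guarantees $c\subsetneq H_0$.
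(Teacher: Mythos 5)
Your proof is correct. It reaches the same conclusion as the paper's but is organized differently: the paper proves the two equalities pairwise, handling the closure in the middle expression by an explicit Euclidean limit argument (for $X\in c$ it exhibits a sequence $[\lambda Y+\mu Y_0]\to X$ with $\mu\neq 0$), and identifies $H$ with the other two sets by observing that $H=c\vee P$ for one admissible $P$ (this is carried out in the proof of the subsequent lemma on the multiview identities). You instead introduce the linear-algebraic anchor $H_0=\Ps(\widetilde C^{-1}(\widetilde p))$ and show all three descriptions coincide with it: the closure is handled abstractly (the complement of the proper subvariety $c$ in the irreducible $H_0$ is dense), and the equality $H=H_0$ is obtained by characterizing the admissible $k$-planes as projectivized complements of $\ker\widetilde C$ in $\widetilde H$ and checking that these complements jointly span $\widetilde H$ via the fact that a vector space is not a union of two proper subspaces. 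Your route buys a cleaner treatment of the closure and makes the spanning step fully explicit, where the paper's argument for the inclusion of the first set in the second is terser (it does not spell out that a given point $X\notin c$ with $CX\in p$ actually lies on a $k$-plane projecting onto $p$, only that it lies on some $k$-plane avoiding $c$); the paper's version is shorter and sets up the identity $H=c\vee P$ that it reuses immediately afterwards.
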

\begin{proof} Denote by \textit{(i)} the definition of $H$, \textit{(ii)} the first row of \eqref{eq: back-proj}, \textit{(iii)} the second row of \eqref{eq: back-proj}.

\textit{(i)=(ii).} Let $P$ be a $k$-plane not meeting $c$ such that $p=C\cdot P$. If $X\in P$, then $CX\in p$, showing inclusion $\subseteq$. For the other inclusion $\supseteq$, let $X$ be a point not meeting $c$. Then, since $\dim c\le N-k-1$, there is a $k$-plane $P$ containing $X$ not meeting $c$.

\textit{(ii)=(iii).} By $CX\in p$, we mean precisely that $CX$ lies in the span of $x_0,\ldots,x_k$, which shows inclusion $\subseteq$. For the other inclusion $\supseteq$, let $X\in (iii)$. If $CX\neq 0$, then $X\not\in c$, and $X\in (ii)$. If $CX=0$, meaning $X\in c$, then let $Y\in \CC^{N+1}$ be an affine representative of $X$. Since $C$ is surjective, there is a point $X_0\in \PP^N$ with $x_0=CX_0$. Let $Y_0\in \CC^{N+1}$ be an affine representative of $X_0$. If $[Z]\in \PP^N$ denotes the projectivization of $Z\in\CC^{N+1}$, then $x_0=C[\lambda Y+\mu Y_0]$ for each $\mu\neq 0$. It follows that $[\lambda Y+\mu Y_0]\in (ii)$ for each $\mu\neq 0$, and since $(ii)$ is Euclidean closed, we must have that $X=[Y]\in (ii)$, finishing the proof. 
\end{proof}

Note that the back-projected plane $H$ depends on both $p$ and $C$, but these are usually understood from the context and not specified via subscripts. If $p$ is not specified, then a back-projected plane refers to any $\dim c+k+1$-plane through $c$.  Importantly, Linear Algebra provides the following \textit{multiview identities}:

\begin{lemma}\label{le: multiview id} In the setting above,    \begin{align}\begin{aligned}\label{eq: multiid}
    \dim c&=N-h-1,\\
    \dim H&=N-h+k,\\
    \dim H&=\dim c+k+1.  
\end{aligned}
\end{align} 
\end{lemma}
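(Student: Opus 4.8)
The plan is to obtain all three equalities of \Cref{eq: multiid} from the rank--nullity theorem, feeding in the second (purely algebraic) description of the back-projected plane $H$ supplied by \Cref{le: back-proj}.

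First I would compute $\dim c$. Regarding $C$ as a linear map $\CC^{N+1}\to\CC^{h+1}$, the full-rank hypothesis says it is surjective, so $\ker C$ is a linear subspace of $\CC^{N+1}$ of dimension $(N+1)-(h+1)=N-h$; passing to the projectivization $c$ lowers the dimension by one, giving $\dim c = N-h-1$, the first identity.

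Next I would establish $\dim H = N-h+k$. Let $x_0,\dots,x_k\in\CC^{h+1}$ be a basis of the linear span of the $k$-plane $p$ (these are linearly independent precisely because $p$ is genuinely $k$-dimensional, which in particular forces $k\le h$), and let $\pi\colon\CC^{h+1}\to\CC^{h+1}/\Span(x_0,\dots,x_k)$ be the quotient map, whose target has dimension $h-k$. By the second line of \Cref{eq: back-proj}, a point $[Y]\in\PP^N$ lies on $H$ exactly when $CY,x_0,\dots,x_k$ are linearly dependent, i.e. (since the $x_i$ are already independent) when $CY\in\Span(x_0,\dots,x_k)$, i.e. when $Y\in\ker(\pi\circ C)$. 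Thus the affine cone over $H$ equals $\ker(\pi\circ C)$. Since $C$ is surjective and $\pi$ is surjective, so is $\pi\circ C$, and rank--nullity gives $\dim\ker(\pi\circ C)=(N+1)-(h-k)=N-h+k+1$; projectivizing yields $\dim H = N-h+k$. (Alternatively, one could fix a $k$-plane $P$ with $C\cdot P=p$ not meeting $c$, use \Cref{le: back-proj} to identify $H=c\vee P$, and apply the dimension formula for joins together with $\dim(c\wedge P)=-1$; but the computation above avoids having to verify the existence and properties of such a $P$.)

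Finally, the third identity is immediate from the first two, since $\dim c+k+1=(N-h-1)+k+1=N-h+k=\dim H$. The only place the hypotheses genuinely enter is the surjectivity of $\pi\circ C$: surjectivity of $C$ is the full-rank assumption on the camera matrix, and surjectivity of $\pi$ encodes that $p$ really has dimension $k$. I do not anticipate any real obstacle here; once \Cref{le: back-proj} is available, the statement is elementary linear algebra.
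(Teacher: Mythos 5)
Your proof is correct, but it runs in the opposite direction from the paper's. The paper first shows $\dim H=\dim c+k+1$ by exhibiting $H$ as the join $c\vee P$ of the center with a $k$-plane $P$ disjoint from it (so that $\dim c\vee P=\dim c+\dim P+1$), and then deduces $\dim H=N-h+k$ from the other two identities. You instead prove $\dim H=N-h+k$ directly, by reading off from the second description in \Cref{le: back-proj} that the affine cone over $H$ is $\ker(\pi\circ C)$ for the quotient map $\pi\colon\CC^{h+1}\to\CC^{h+1}/\Span(x_0,\dots,x_k)$, and applying rank--nullity to the surjection $\pi\circ C$; the third identity then follows arithmetically. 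Your route is arguably cleaner as a pure dimension count, since it sidesteps producing a $k$-plane $P$ disjoint from $c$ with $C\cdot P=p$. What the paper's route buys, however, is the structural identity $H=c\vee P$ itself: the very next statement, \Cref{le: CP c vee P}, is declared to be ``a direct consequence of the proof'' of this lemma, and your argument does not produce it. So if your proof were substituted in, \Cref{le: CP c vee P} would need its own short argument (essentially the alternative you mention in your parenthetical). The only hypotheses you use --- surjectivity of $C$ and linear independence of $x_0,\dots,x_k$ --- are exactly the ones in force, so there is no gap.
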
 

\begin{proof} Since $C$ is full rank of size $(h+1)\times (N+1)$, its kernel $c$ is of dimension $N-h-1$. We prove $\dim H=\dim c+k+1$ and note that the second equality of \eqref{eq: multiid} follows from the first and the third. By \Cref{le: back-proj}, it is clear that $H$ is non-empty. Then, by definition of $H$, we have that $H$ contains $c$ and a $k$-plane $P$ not meeting $c$ such that $C\cdot P=p$. Let $X_0,\ldots,X_k$ span $P$. If $CX_0,\ldots,CX_k$ are linearly dependent, then this means precisely that $P$ meets $c$. Therefore $CX_0,\ldots,CX_k$ span a $k$-plane, which must be equal to $p$. Then $CX$ being linearly dependent of $CX_0,\ldots,CX_k$ is equivalent to $X$ being a linear combination of $X_i$ and $c$. In other words, $H=c\vee P$. Since $c$ and $P$ are disjoint, we have $\dim c\vee P=\dim c+\dim P+1$, which is the statement.   
\end{proof}

The following statement is a direct consequence of the proof of \Cref{le: multiview id}

\begin{lemma}\label{le: CP c vee P} Let $C$ be a camera matrix with center $c$. If $P$ is a $k$-plane that does not meet $c$, then the back-projected plane of $C\cdot P$ is $c\vee P$. 
\end{lemma}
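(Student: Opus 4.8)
The plan is to read off the claim from the computation already carried out inside the proof of \Cref{le: multiview id}, simply reorganizing it so that $P$ is fixed first and $p$ is defined from it. Concretely, since $P$ does not meet $c$, the $k$-plane $p:=C\cdot P$ is well-defined by the discussion preceding \Cref{le: back-proj}; choosing points $X_0,\ldots,X_k$ spanning $P$, their images $CX_0,\ldots,CX_k$ are linearly independent (otherwise some nontrivial combination of the $X_i$ would lie in $c=\ker C$, forcing $P$ to meet $c$) and hence span $p$. I would then apply the second description in \Cref{le: back-proj} with the spanning set $x_i:=CX_i$ of $p$, so that the back-projected plane of $p$ is $H=\{X\in\PP^N: CX, CX_0,\ldots,CX_k \text{ are linearly dependent}\}$.

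Next I would unwind this linear-dependence condition on affine representatives: $CX$ lies in the span of $CX_0,\ldots,CX_k$ exactly when $C\bigl(X-\sum_i\lambda_iX_i\bigr)=0$ for suitable scalars $\lambda_i$, i.e.\ when $X$ lies in $\Span(X_0,\ldots,X_k)+\ker C$. Projectivizing, this says precisely that $X\in c\vee P$, and therefore $H=c\vee P$, which is the assertion. Since $c\subseteq c\vee P$, the degenerate case $CX=0$, i.e.\ $X\in c$, causes no trouble; alternatively it is handled exactly as in the $(ii)=(iii)$ step of \Cref{le: back-proj}.

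There is essentially no obstacle here: the only point requiring a little care is the routine translation between the projective incidence statements and the affine linear-algebra statements about representatives, which is already dealt with in the proof of \Cref{le: back-proj}. Accordingly I would present the argument as a short paragraph that extracts the identity $H=c\vee P$ from the proof of \Cref{le: multiview id}, rather than repeating the computation in full.
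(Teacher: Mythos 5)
Your proposal is correct and matches the paper, which proves this lemma by exactly the observation you make: the identity $H=c\vee P$ is already established inside the proof of \Cref{le: multiview id} (via the second description in \Cref{le: back-proj} and the equivalence of $CX\in\Span(CX_0,\ldots,CX_k)$ with $X\in\Span(X_0,\ldots,X_k)\vee c$), and the paper simply cites that proof. Your reorganization of starting from $P$ and defining $p:=C\cdot P$ is immaterial, so nothing further is needed.
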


Note that the map $P\mapsto c\vee P$ is well-defined precisely for $k$-planes $P$ that do not intersect the center $c$. If a camera matrix $C$ is not specified, then a \textit{center} $c$ in this paper is any subspace of $\PP^N$. A \textit{center arrangement} $\textit{\textbf c}$ is a list $(c_1,\ldots,c_n)$ of centers in $\PP^N$, at least one in number. With \Cref{le: CP c vee P} in mind, given a center arrangement $\ca$, we define 
\begin{align}\begin{aligned}\label{eq: jointmap}
    \Phi_{\textit{\textbf c},k}: \Gr(k,\PP^N)&\dashrightarrow \Gr(\dim c_1+k+1,\PP^{N})\times \cdots\times\Gr(\dim c_n+k+1,\PP^{N}),\\
    P&\mapsto \;\:(c_1\vee P,\ldots,c_n\vee P).
    \end{aligned}
\end{align}
We denote by $\mathcal{M}_{\textit{\textbf{c}},k}$ the Zariski closure of the image $\mathrm{Im}\; \Phi_{\textit{\textbf c},k}$, and we call it the \textit{(geometric) multiview variety}.

Vital to our work are the three varieties $\Pck,\Vck$ and $\Hck$ associated to centers arrangements $\ca$ and integers $k$. These encode constraints that must hold for elements in the multiview variety $\Mck$, by which we mean that they contain $\Mck$. Below, all tuples $H=(H_1,\ldots,H_n)$ are such that $H_i\in \Gr(\dim c_i+k+1,\PP^N)$ for each $i\in [n]$.

\begin{enumerate}[label=(\roman*)]
    \item\label{enum: Pck} $\mathcal M_{\textit{\textbf c},k}\subseteq \mathcal P_{\textit{\textbf c},k}$, where 
    \begin{align}\begin{aligned}
    \mathcal{P}_{\textit{\textbf c},k}:=\{H:c_i\subseteq  H_i \textnormal{ for all } i\in [n]\}.
\end{aligned}
\end{align}
This inclusion holds, because by construction, every tuple $H$ in the image of $\Phi_{\ca,k}$ is of the form $H_i=c_i\vee P$ for some $k$-plane $P$ not meeting any center. Then $c_i\subseteq H_i$ for each $i$ also in the closure.

\item\label{enum: Hck} $\mathcal M_{\textit{\textbf c},k}\subseteq \mathcal H_{\textit{\textbf c},k}$, where 
\begin{align}\begin{aligned}
    \calH_{\textit{\textbf c},k}:=\{H:\dim H_{[n]}\ge k \}.
\end{aligned}
\end{align}
By the same reasoning as above, for $H\in \mathrm{Im}\; \Phi_{\textit{\textit{\textbf c}},k}$ there exists a $k$-plane $P$ contained in each $H_i$. $\calH_{\textit{\textbf c},k}$ generalizes the concurrent lines variety from \cite{ponce2017congruences}.  

\item\label{enum: Vck} $\mathcal M_{\textit{\textbf c},k}\subseteq \mathcal V_{\textit{\textbf c},k}$, where \begin{align}\begin{aligned}
    \calV_{\textit{\textbf c},k}:=\{H:\dim H_I\ge \dim c_I+k+1 \textnormal{ for all }I\subseteq [n] \textnormal{ with }|I|>1 \textnormal{ and }c_I\neq \emptyset \}.
\end{aligned}
\end{align}
These equations arise from the intersections of centers. To see this, let $P$ be a $k$-plane that meets no centers. If $c_I\neq \emptyset$ for some $I\subseteq [n]$, then each back-projected plane $c_i\vee P,i\in I,$ contains the $\dim c_I+k+1$-plane $c_I\vee P$. Therefore, given $H\in \mathrm{Im}\;\Phi_{\textit{\textbf c},k}$, we have that $\dim H_I\ge\dim c_I+k+1$.
\end{enumerate}
In particular, 
\begin{align}
    \Mck\subseteq \Pck\cap \Hck\cap \Vck.
\end{align}
For $k=0$, this inclusion is in many cases known to be an equality. For $k=1$ it is not always an equality, which was a main insight of \cite{breiding2023line}. In \Cref{s: Prop}, we give a condition for when $\Mck$ is an irreducible component of the right-hand side.


\subsection{Photographic Multiview Varieties}\label{ss: PhoMV} This paper focuses on geometric multiview varieties, because they are more convenient to work with. Nevertheless, it is important to explain their relation to photographic multiview varieties in detail, which we do in \Cref{le: lin isomo}. This is because photographic multiview varieties appear in Computer Vision applications.

We define a \textit{camera arrangement} to be a list $\Ca=(C_1,\ldots,C_n)$ of camera matrices $C_i$ of sizes $(h_i+1)\times (N+1)$, at least one in number. A camera arrangement $\Ca$ defines a \textit{joint image} map as follows:
\begin{align}\begin{aligned}
    \Phi_{\Ca,k}: \Gr(k,\PP^N)&\dashrightarrow \Gr(k,\PP^{h_1})\times \cdots\times\Gr(k,\PP^{h_n}),\\
    P&\mapsto \;\:(C_1\cdot P,\ldots,C_n\cdot P).
\end{aligned}
\end{align}
The \textit{(photographic) multiview variety} $\mathcal{M}_{\Ca,k}$ is the Zariski closure of the image of $\Phi_{\Ca,k}$. 

We note that as an analogue to \Cref{ss: GeoMV}, there are varieties $ \mathcal H_{\Ca,k}$ and $ \mathcal V_{\Ca,k} $ that encode constraints that must hold for elements in $\calM_{\Ca,k}$ using \Cref{le: CP c vee P}. Here, $ \mathcal H_{\Ca,k}$ is the set of image tuples $p$ whose tuple of back-projected planes $H$ lies in $\Hck$, and $ \mathcal V_{\Ca,k} $ is defined analogously. The next result motivates our subsequent focus on geometric multiview varieties. 

\begin{lemma}\label{le: lin isomo} If $\ca$ is the center arrangement of $\Ca$, then $\mathcal{M}_{\Ca,k}$ is linearly isomorphic to $\mathcal{M}_{\textit{\textbf c},k}$.
\end{lemma}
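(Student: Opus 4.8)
The plan is to construct the linear isomorphism explicitly from the structure of a camera matrix and its center, and then check that it intertwines the two joint-image maps. For a single camera matrix $C$ of size $(h+1)\times(N+1)$ with center $c=\ker C$, the induced map $\PP^N\dashrightarrow\PP^h$ factors through the quotient $\PP^N\dashrightarrow \PP(\CC^{N+1}/\widetilde c)$, where $\widetilde c\subseteq\CC^{N+1}$ is the $(N-h)$-dimensional linear subspace whose projectivization is $c$; this quotient is an isomorphism $\PP(\CC^{N+1}/\widetilde c)\xrightarrow{\sim}\PP^h$ induced by $C$. Now a $k$-plane $p$ in $\PP^h$ corresponds under this isomorphism to a $k$-plane in $\PP(\CC^{N+1}/\widetilde c)$, and its preimage in $\PP^N$ under the quotient map is exactly a $(\dim c+k+1)$-plane containing $c$ — which, by \Cref{le: back-proj} and \Cref{le: CP c vee P}, is precisely the back-projected plane $H(p)$. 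So the assignment $p\mapsto H(p)$ is a bijection $\Gr(k,\PP^{h_i})\to \{H\in\Gr(\dim c_i+k+1,\PP^N): c_i\subseteq H\}$, and I would argue it is an isomorphism of varieties (indeed induced by a linear map on Plücker coordinates, once one fixes complements to $\widetilde c_i$) by exhibiting it and its inverse as restrictions of maps between the ambient Grassmannians coming from the linear maps $C_i$ and a chosen section.

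The key steps in order: (1) Fix, for each $i$, a linear splitting $\CC^{N+1}=\widetilde c_i\oplus W_i$ with $\dim W_i=h_i+1$, so that $C_i|_{W_i}:W_i\to\CC^{h_i+1}$ is an isomorphism; this gives a linear section $s_i:\CC^{h_i+1}\to\CC^{N+1}$ of $C_i$. (2) Define $\psi_i:\Gr(k,\PP^{h_i})\to\Gr(\dim c_i+k+1,\PP^N)$ by $\psi_i(p)=c_i\vee s_i(p)$ where $s_i(p)$ is the image $k$-plane; this is a morphism (it is linear on Plücker coordinates, being built from the linear map $s_i$ and the fixed subspace $c_i$), and by \Cref{le: CP c vee P} its image lands in $\Pck$-type slices and $\psi_i(C_i\cdot P)=c_i\vee P$ whenever $P$ does not meet $c_i$. (3) Check that $\psi:=\psi_1\times\cdots\times\psi_n$ is injective with morphism inverse given by projecting each $H_i$ through $C_i$ (equivalently, $H_i\mapsto C_i\cdot(H_i\wedge W_i)$), so $\psi$ is a linear isomorphism onto its image. (4) Conclude from $\psi_i(C_i\cdot P)=c_i\vee P$ that $\psi\circ\Phi_{\Ca,k}=\Phi_{\ca,k}$ as rational maps on the dense domain where all projections are defined, hence $\psi$ restricts to a linear isomorphism $\mathcal M_{\Ca,k}\to\mathcal M_{\ca,k}$ on Zariski closures (using that $\psi$ is a closed immersion, or at least a linear iso onto its image, so it carries Zariski closure to Zariski closure).

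The main obstacle I anticipate is step (2)–(3): verifying that $p\mapsto c_i\vee s_i(p)$ and its inverse are genuinely \emph{linear} (not merely polynomial or rational) maps in Plücker coordinates, so that the word "linearly isomorphic" in the statement is justified. Concretely, one must show that taking the join with the fixed subspace $c_i$ is a linear operation on the relevant Plücker coordinates — this follows because $c_i$ is fixed, so $c_i\vee(-)$ is induced by a fixed linear inclusion $\CC^{N+1}/\widetilde c_i\hookrightarrow$ a suitable exterior power, but spelling this out cleanly requires a careful choice of bases and Plücker embeddings on both sides. A secondary, more minor point is checking that the domains of definition match up: $\Phi_{\Ca,k}$ and $\Phi_{\ca,k}$ are both defined exactly on $k$-planes meeting no center, and $\psi$ is defined everywhere, so the identity $\psi\circ\Phi_{\Ca,k}=\Phi_{\ca,k}$ holds on a common dense open set and therefore passes to the closures.
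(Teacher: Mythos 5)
Your proposal is correct and follows essentially the same route as the paper: your splitting $\CC^{N+1}=\widetilde c_i\oplus W_i$ with section $s_i$ is exactly the paper's choice of right pseudo-inverse $C_i^\dagger$, your map $\psi_i(p)=c_i\vee s_i(p)$ is the paper's $c_i\vee(\wedge^{k+1}C_i^\dagger)$, and both arguments conclude by showing this coordinatewise back-projection map is a linear isomorphism onto its image and hence carries the closure $\mathcal M_{\Ca,k}$ to $\mathcal M_{\ca,k}$. The only cosmetic difference is that you exhibit the inverse directly as $H_i\mapsto C_i\cdot(H_i\wedge W_i)$, whereas the paper establishes full rank of the matrix $A_C$ and takes a left pseudo-inverse.
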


The explicit linear isomorphism from the statement is described in the proof. This isomorphism allows us to express the set-theoretic equations defining the photographic multiview varieties in terms of the geometric ones.

\begin{proof} Let $C:\PP^N\dashrightarrow\PP^h$ be a camera matrix. It suffices to show that the map $p\mapsto H$, sending a $k$-plane in $\PP^h$ to its back-projected plane in $\PP^N$ is a linear isomorphism. Below we explicitely describe $p\mapsto H$ and its inverse as linear maps.  

Recall that we assume $h\le N$. Denote by $C^\dagger$ any choice of $(N+1)\times (h+1)$ right pseudo-inverse of $C$, meaning a matrix such that $CC^\dagger$ is the $(h+1)\times (h+1)$ identity matrix. The linear map 
\begin{align}\begin{aligned}
    \Gr(k,\PP^h)\to \Gr(k,\PP^N), \quad  p\mapsto  \wedge^{k+1}C^\dagger p, 
\end{aligned}
\end{align}
sends $p$ to a $k$-plane in $\PP^N$ that lies in the image of $C^\dagger$, and is therefore disjoint from the center $c$ by the fact that $\mathrm{Im}\; C^\dagger $ affinely intersects $\mathrm{ker}\; C$ only in the point $0$. Observe that $\wedge^{k+1}C^\dagger$ is a linear surjection from $\Gr(k,\PP^N)$ to the $k$-planes in $\PP^N$ that lie in $\mathrm{Im}\; C^\dagger$, since $C^\dagger$ is full rank and $\dim \mathrm{Im}\; C^\dagger=h$. There is a matrix $\hat{C}$ such that
\begin{align}\begin{aligned}
\Gr(k,\PP^h)\to \Gr(\dim c+k+1,\PP^N), \quad   p \mapsto \hat{C}p= c\vee (\wedge^{k+1}C^\dagger p).
\end{aligned}
\end{align}
By the multiview identities, $\hat{C}$ is a size
\begin{align}\label{eq: A}
   {N+1\choose N-\dim c-k}\times {N-\dim c\choose N-\dim c-k-1}
\end{align}
matrix. In particular, it has at least as many rows as columns. We claim that $\hat{C}$ is full rank and that therefore the left pseudo-inverse $\hat{C}^\dagger$ is its inverse morphism. 

One can construct a full rank matrix $A$ of size $ {h+1\choose k+1}\times {N+1\choose \dim c+k+1}$ that is injective on $\mathrm{Im}\;\hat{C}$. Since both $A$ and $\hat{C}$ are injective, their product is injective from $\Gr(k,\PP^{h})$ to itself. This means that the dimension of a fiber of $A\hat{C}$ is 0-dimensional, and by the fiber dimension theorem \cite[Chapter 1, \S 8, Corollary 1]{mumford1999red}, $A\hat{C}$ is dominant onto its image. Then $A\hat{C}$ must be invertible, as $\Gr(k,\PP^{h})$ spans $\PP^{{h+1\choose k+1}-1}$, and $\hat{C}$ must be full rank.
\end{proof}

From the applied point of the view, the image of $\Phi_{\Ca,k}$ is in some sense more important than its closure, the multiview variety. The next proposition describes the image in terms of $\mathcal M_{\Ca,k}$, generalizing a result of \cite{trager2015joint}.

\begin{proposition}\label{prop: image} The image of $\Phi_{\Ca,k}$ is described via the multiview variety as
\begin{align}
    \mathrm{Im}\,\Phi_{\Ca,k}= \calM_{\Ca,k}\setminus \mathcal{T}, 
\end{align} 
where
\begin{align}\mathcal{T}:=\Big\{p\in \calM_{\Ca,k}: \textnormal{ there exists }i \textnormal{ such that }\dim c_i\wedge H_{[n]} +k\ge \dim H_{[n]}\Big\}.\end{align}
\end{proposition}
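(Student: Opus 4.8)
The plan is to show the two inclusions separately, using the concrete linear-algebra description of back-projected planes from \Cref{le: back-proj} and the multiview identities of \Cref{le: multiview id}. First I would reduce the statement to a single camera: for a fixed camera matrix $C$ with center $c$, a tuple $p$ lies in the image of $\Phi_{\calC,k}$ exactly when there is a $k$-plane $P$ not meeting any center with $C_i\cdot P = p_i$ for all $i$; the candidate $P$ is forced to be $H_{[n]}$, or rather a $k$-plane inside $H_{[n]}$, once we know $\dim H_{[n]} = k$. So the key geometric fact to isolate is: given $p\in \calM_{\calC,k}$, when does there exist a $k$-plane $P\subseteq H_{[n]}$ with $C_i\cdot P = p_i$ for every $i$, and when is such a $P$ necessarily disjoint from all centers $c_i$?

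For the inclusion $\mathrm{Im}\,\Phi_{\calC,k}\subseteq \calM_{\calC,k}\setminus\mathcal T$, I would take $p = \Phi_{\calC,k}(P)$ with $P$ a $k$-plane meeting no center. Then by \Cref{le: CP c vee P} the back-projected plane $H_i$ equals $c_i\vee P$, so $P\subseteq H_{[n]}$ and hence $\dim H_{[n]}\ge k$; combined with the reverse inequality coming from $\calM_{\calC,k}\subseteq \calH_{\calC,k}$ (constraint \ref{enum: Hck 1}), we get $\dim H_{[n]} = k$ and $P = H_{[n]}$. Now suppose for contradiction that $p\in\mathcal T$, i.e. $\dim(c_i\wedge H_{[n]}) + k \ge \dim H_{[n]} = k$ for some $i$, which forces $c_i\wedge H_{[n]}\neq\emptyset$, i.e. $c_i\wedge P \neq\emptyset$. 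That contradicts $P$ not meeting $c_i$. Hence $p\notin\mathcal T$.

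For the reverse inclusion $\calM_{\calC,k}\setminus\mathcal T\subseteq \mathrm{Im}\,\Phi_{\calC,k}$, take $p\in\calM_{\calC,k}$ with $p\notin\mathcal T$. Since $p\in\calH_{\calC,k}$ we have $\dim H_{[n]}\ge k$; I claim it equals $k$. Indeed $H_{[n]}\subseteq H_i$ and $c_i\subseteq H_i$; if $\dim H_{[n]} > k$ then a dimension count inside $H_i$ (using $\dim H_i = \dim c_i + k + 1$ from \Cref{le: multiview id}) shows $\dim(c_i \wedge H_{[n]}) \ge \dim c_i + \dim H_{[n]} + 1 - \dim H_i = \dim H_{[n]} - k > 0$, in particular $\dim(c_i\wedge H_{[n]}) + k \ge \dim H_{[n]}$, so $p\in\mathcal T$ — a contradiction. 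Hence $\dim H_{[n]} = k$; set $P := H_{[n]}$, which is then a $k$-plane. The condition $p\notin\mathcal T$ reads $\dim(c_i\wedge H_{[n]}) + k < \dim H_{[n]} = k$, i.e. $\dim(c_i\wedge P) < 0$, i.e. $c_i\wedge P = \emptyset$, for every $i$. So $P$ meets no center and $\Phi_{\calC,k}(P)$ is defined; it remains to check $C_i\cdot P = p_i$, equivalently that the back-projected plane of $C_i\cdot P$ equals $H_i$ (this suffices because $p\mapsto H(p)$ is a linear isomorphism by \Cref{le: lin isomo}). By \Cref{le: CP c vee P} the back-projected plane of $C_i\cdot P$ is $c_i\vee P\subseteq H_i$, and a final dimension count gives $\dim(c_i\vee P) = \dim c_i + \dim P + 1 = \dim c_i + k + 1 = \dim H_i$ (the join being direct since $c_i\wedge P = \emptyset$), forcing equality $c_i\vee P = H_i$. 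Hence $p = \Phi_{\calC,k}(P)\in\mathrm{Im}\,\Phi_{\calC,k}$.

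I expect the main obstacle to be the passage from "$p\in\calM_{\calC,k}$, a limit point of the image" to the existence of an honest $k$-plane $P$ witnessing $p$: a priori $H_{[n]}$ could have dimension larger than $k$, and the whole point is that the non-membership in $\mathcal T$ is precisely the dimension bookkeeping that rules this out and simultaneously guarantees disjointness from the centers. The dimension counts must be done carefully using $\dim(V\wedge W) \ge \dim V + \dim W - \dim(V\vee W)$ and the multiview identity $\dim H_i = \dim c_i + k + 1$, and one must be careful with the convention $\dim\emptyset = -1$ so that the inequality $\dim(c_i\wedge H_{[n]}) + k \ge \dim H_{[n]}$ correctly encodes "$c_i$ and a $k$-plane in $H_{[n]}$ are forced to intersect." A minor additional point is checking that the equality $C_i\cdot P = p_i$, and not merely an equality of back-projected planes up to the isomorphism of \Cref{le: lin isomo}, is what we need — but since that map is injective, equality of back-projected planes is equivalent to equality of image $k$-planes.
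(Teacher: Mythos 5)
There is a genuine error: both of your inclusions run through the claim that $p\notin\mathcal T$ forces $\dim H_{[n]}=k$, and that claim is false. In the forward direction you invoke a ``reverse inequality coming from $\calM_{\calC,k}\subseteq\calH_{\calC,k}$'', but that constraint only gives $\dim H_{[n]}\ge k$. In the reverse direction your dimension count is off by one: \Cref{le: int + join} gives $\dim (c_i\wedge H_{[n]})\ge \dim c_i+\dim H_{[n]}-\dim(c_i\vee H_{[n]})\ge \dim c_i+\dim H_{[n]}-\dim H_i=\dim H_{[n]}-k-1$, not $\dim H_{[n]}-k$; so from $\dim H_{[n]}>k$ you only obtain $\dim(c_i\wedge H_{[n]})+k\ge\dim H_{[n]}-1$, which does not place $p$ in $\mathcal T$. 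Concretely, for $n=1$ with a nonempty center one has $\mathcal T=\emptyset$ and $\dim H_{[n]}=\dim c_1+k+1>k$ for every $p$; and for two cameras in $\PP^3$ with distinct line centers and $k=0$ (where the paper notes $\ell_{\ca,0}=1$), a generic point of the image has $\dim H_{[n]}=1>0$ yet lies outside $\mathcal T$. Your argument would wrongly conclude that such $p$ lie in $\mathcal T$, hence outside the image.

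The fix, which is the paper's actual (very short) proof, is to drop the insistence that $P=H_{[n]}$: a tuple $p\in\calM_{\calC,k}$ lies in the image iff there is a $k$-plane $P\subseteq H_{[n]}$ meeting no center, since such a $P$ satisfies $c_i\vee P=H_i$ by exactly the dimension count you give at the end (that part of your write-up, together with the reductions via \Cref{le: lin isomo} and \Cref{le: CP c vee P}, is correct). Now a $k$-plane inside $H_{[n]}$ disjoint from the subspace $c_i\wedge H_{[n]}$ exists iff $\dim(c_i\wedge H_{[n]})+k+1\le\dim H_{[n]}$, and conversely any $k$-plane $P\subseteq H_{[n]}$ disjoint from $c_i$ is disjoint from $c_i\wedge H_{[n]}$, forcing $\dim(c_i\wedge H_{[n]})+k+1\le\dim H_{[n]}$. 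The simultaneous validity of these inequalities for all $i$ is precisely the negation of membership in $\mathcal T$, which yields both inclusions without ever determining $\dim H_{[n]}$.
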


\begin{proof} Let $p=(p_1,\ldots,p_n)\in \mathcal M_{\Ca,k}$ and write $(H_1,\ldots,H_n)$ for its tuple of back-projected planes. $p$ lies in the image of $\Phi_{\Ca,k}$ if and only if there is a $k$-plane $P$ inside the intersection $H_{[n]}$ meeting no center. This happens if and only if each center inside $H_{[n]}$ satisfies 
\begin{align}
    \dim c_i\wedge H_{[n]}+k+1\le \dim H_{[n]}, 
\end{align}
which is the statement.
\end{proof}

\begin{remark} There are two natural extensions of the setting described in this section. We could allow for matrices $C:\PP^N\dashrightarrow\PP^h$ that are not full rank and we could allow for $h>N$. However, the image of a linear map that is not full rank is isomorphic to the image of the linear map we get by removing a minimal number of rows of the corresponding matrix, making it full rank. We have a similar situation if $h>N$. In either case, for the purposes of this paper, no interesting geometry arises from these extensions.
\end{remark}

\section{Geometry of Intersections and Joins}\label{s: Geo Int Joins} For the sake of completeness, we include basic results on the intersections and joins of subspaces of projective spaces that are helpful throughout the article. As we saw in \Cref{ss: GeoMV},
\begin{align}
    \Mck\subseteq \Pck\cap \Hck\cap \Vck,
\end{align}
and both $\Vck$ and $\Hck$ are defined via intersections of subspaces, which motivates this study. Note that the proofs we provide are nothing but exercises in Linear Algebra. The reader may skip this section at first and only come back to it whenever it is referenced in proofs in later sections.

To begin, we state the relation between the dimensions of intersections and joins. 

\begin{lemma}\label{le: int + join} Let $H_1,H_2$ be subspaces of $\PP^N$. We have
    \begin{align}
        \dim H_1\wedge H_2+\dim H_1\vee H_2=\dim H_1+\dim H_2.    \end{align}
\end{lemma}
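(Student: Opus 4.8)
The statement is the classical dimension formula for projective subspaces, and the natural route is to pass to the underlying vector subspaces of $\CC^{N+1}$. Write $\widetilde{H}_i$ for the linear subspace of $\CC^{N+1}$ with $\PP(\widetilde{H}_i)=H_i$, so that $\dim_{\PP} H_i = \dim_{\CC}\widetilde{H}_i - 1$. The key translation is that $\widetilde{H_1\vee H_2} = \widetilde{H}_1 + \widetilde{H}_2$ (the linear span of the union) and $\widetilde{H_1\wedge H_2} = \widetilde{H}_1 \cap \widetilde{H}_2$; both identities are immediate from the definitions of join and intersection of projective subspaces, together with the convention $\dim\emptyset = -1$, which corresponds exactly to the zero vector space having linear dimension $0$.

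With this dictionary in place, the plan is simply to invoke the Grassmann dimension formula from linear algebra,
\begin{align}
    \dim_{\CC}(\widetilde{H}_1 + \widetilde{H}_2) + \dim_{\CC}(\widetilde{H}_1 \cap \widetilde{H}_2) = \dim_{\CC}\widetilde{H}_1 + \dim_{\CC}\widetilde{H}_2,
\end{align}
and then subtract $2$ from both sides. The left-hand side becomes $(\dim_{\PP}(H_1\vee H_2) + 1) + (\dim_{\PP}(H_1\wedge H_2) + 1) - 2 = \dim_{\PP}(H_1\vee H_2) + \dim_{\PP}(H_1\wedge H_2)$, and the right-hand side becomes $\dim_{\PP} H_1 + \dim_{\PP} H_2$, which is exactly the claimed identity. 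If one prefers a self-contained argument for the Grassmann formula, extend a basis of $\widetilde{H}_1 \cap \widetilde{H}_2$ to a basis of $\widetilde{H}_1$ and, separately, to a basis of $\widetilde{H}_2$, and check that the combined list is a basis of $\widetilde{H}_1 + \widetilde{H}_2$; counting elements gives the identity.

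There is essentially no obstacle here: the only point requiring a word of care is the boundary case where $H_1$ and $H_2$ are disjoint as projective subspaces, i.e. $H_1 \wedge H_2 = \emptyset$. Then $\widetilde{H}_1 \cap \widetilde{H}_2 = \{0\}$ has linear dimension $0$, and the convention $\dim\emptyset = -1$ makes the formula come out correctly; I would state this explicitly so the reader sees that the convention adopted in \Cref{s: GMV} is precisely what is needed. One should also note the trivial edge cases ($H_1$ or $H_2$ empty) are handled by the same convention. Beyond that, the proof is a one-line consequence of the linear-algebraic Grassmann identity.
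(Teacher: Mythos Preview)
Your proposal is correct. You pass to the affine cones $\widetilde{H}_i\subseteq\CC^{N+1}$, invoke the Grassmann dimension formula $\dim(\widetilde{H}_1+\widetilde{H}_2)+\dim(\widetilde{H}_1\cap\widetilde{H}_2)=\dim\widetilde{H}_1+\dim\widetilde{H}_2$, and subtract $2$. The boundary case $H_1\wedge H_2=\emptyset$ is handled by the convention $\dim\emptyset=-1$, as you note.

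The paper takes a slightly different route. Rather than citing or proving the Grassmann formula via basis extension, it forms the concatenated matrix $M=\begin{bmatrix}v_0&\cdots&v_r&w_0&\cdots&w_s\end{bmatrix}$ from spanning sets of $H_1$ and $H_2$, applies rank--nullity to get $\rank M+\dim\ker M=\dim H_1+\dim H_2+2$, and then identifies $\rank M=\dim H_1\vee H_2+1$ and $\dim\ker M=\dim H_1\wedge H_2+1$. The latter identification is argued by writing $M=[M_v\;M_w]$ and observing that $\lambda=[\lambda_v;-\lambda_w]\in\ker M$ if and only if $M_v\lambda_v=M_w\lambda_w$ represents a point of $H_1\wedge H_2$. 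Your approach is cleaner and more conceptual; the paper's is more hands-on and arguably more self-contained, since rank--nullity is the only ingredient beyond bookkeeping. Both are standard and equally valid here.
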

\begin{proof} Let $v_0,\ldots,v_r\in \CC^{N+1}$ be linearly indepent vectors that span $H_1$ and $w_0,\ldots,w_s\in \CC^{N+1}$ linearly independent vectors that span $H_2$. Consider the matrix
\begin{align}
    M=\begin{bmatrix}
        v_0& \ldots & v_r & w_0 & \ldots & w_s
    \end{bmatrix}.
\end{align}
By the dimensions theorem in Linear Algebra, $\rank M+\dim \ker M=r+s+2$, which equals $\dim H_1+\dim H_2+2$. It is clear that $\rank M=\dim H_1\vee H_2+1$. We are done if we can show that $\dim \ker M =\dim H_1\wedge H_2+1$. To do this, we first write $M=\begin{bmatrix}M_v & M_w
\end{bmatrix}$, where $M_v$ is the matrix of column vectors $v_i$ and $M_w$ is the matrix of column vectors $w_i$. Similarly, given $\lambda\in \CC^{r+s+2}$, write $\lambda=[\lambda_v;\; -\lambda_w]$ for the column vector comprised of $\lambda_v\in \CC^{r+1}$ and $\lambda_w\in \CC^{s+1}$. We have that $X\in H_1\wedge H_2$ if and only if there exists a $\lambda$ such that $Y=M_v\lambda_v=M_w\lambda_w$, where $Y\in \CC^{N+1}$ is an affine representative of $X\in \PP^N$. However, by construction, $M_v\lambda_v=M_w\lambda_w$ if and only if $\lambda\in \ker M$. Further, each $\lambda\in \ker M$ gives a unique element $M_v\lambda_v=M_w\lambda_w$ (up to scaling) of $H_1\wedge H_2$ by the fact that $M_v$ and $M_w$ are full rank. This shows that the cone over $H_1\wedge H_2$ is equals the kernel of $M$, viewed as a projective space.
\end{proof}

The next lemma gives a condition that guarantees that a tuple of subspaces meet in a specificed dimension. 

\begin{lemma}\label{le: intDim} Let $H_i$ be subspaces of $\PP^N$ such that  
\begin{align}
  \sum_{i=1}^n \dim H_i\ge (n-1)N+k.  
\end{align}
Then $\dim H_{[n]}\ge k$.\end{lemma}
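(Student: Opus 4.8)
The natural approach is induction on $n$, using \Cref{le: int + join} as the key engine. For the base case $n=1$, the hypothesis reads $\dim H_1 \ge k$, which is exactly the conclusion $\dim H_{[1]} = \dim H_1 \ge k$. For the inductive step, suppose the statement holds for $n-1$ subspaces, and let $H_1,\ldots,H_n$ satisfy $\sum_{i=1}^n \dim H_i \ge (n-1)N + k$. The idea is to split off $H_n$: set $W = H_{[n-1]} = \wedge_{i=1}^{n-1} H_i$ and apply \Cref{le: int + join} to $W$ and $H_n$, giving
\begin{align}
\dim H_{[n]} = \dim(W \wedge H_n) = \dim W + \dim H_n - \dim(W \vee H_n) \ge \dim W + \dim H_n - N,
\end{align}
since $W \vee H_n \subseteq \PP^N$.

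\textbf{Bookkeeping of the dimension count.} The remaining task is to produce a good lower bound on $\dim W = \dim H_{[n-1]}$ from the inductive hypothesis. Here one has to be slightly careful: the hypothesis for $n-1$ subspaces requires $\sum_{i=1}^{n-1} \dim H_i \ge (n-2)N + k'$ in order to conclude $\dim H_{[n-1]} \ge k'$. So I would set $k' := \big(\sum_{i=1}^{n-1} \dim H_i\big) - (n-2)N$; as long as $k' \le N$ (which holds because each $\dim H_i \le N$, so in fact $\sum_{i=1}^{n-1}\dim H_i \le (n-1)N$, giving $k' \le N$) the inductive hypothesis applies and yields $\dim H_{[n-1]} \ge k'$. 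Substituting into the displayed inequality,
\begin{align}
\dim H_{[n]} \ge k' + \dim H_n - N = \Big(\sum_{i=1}^{n-1}\dim H_i\Big) - (n-2)N + \dim H_n - N = \Big(\sum_{i=1}^n \dim H_i\Big) - (n-1)N \ge k,
\end{align}
where the last inequality is exactly the hypothesis.

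\textbf{Main obstacle.} The only subtlety — and the thing to get right — is the validity of applying the inductive hypothesis, i.e. checking that the chosen $k'$ is a legitimate target dimension. A cleaner alternative that sidesteps choosing $k'$ is to strengthen the induction to the statement: for every $m \le n$ and every sub-selection, $\dim H_{[m]} \ge \big(\sum_{i} \dim H_i\big) - (m-1)N$ whenever the right-hand side makes the quantity meaningful; but really the argument above is self-contained once one notes $\dim H_i \le N$ for all $i$. I expect no genuine difficulty beyond this careful accounting; the geometric content is entirely carried by \Cref{le: int + join}, and everything else is arithmetic with the inequality $\sum \dim H_i \ge (n-1)N + k$.
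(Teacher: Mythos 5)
Your proof is correct and is essentially the paper's own argument: both proceed by induction on $n$, peel off the last subspace, and use \Cref{le: int + join} (equivalently the two-subspace case) together with $\dim(W\vee H_n)\le N$ to close the induction. The only difference is bookkeeping — you set the intermediate target $k'=\sum_{i<n}\dim H_i-(n-2)N$, while the paper writes $\dim H_{n+1}=N-j$ and inducts with target $k+j$ — and your check that the inductive hypothesis is legitimately applicable is sound.
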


\begin{proof} The statement clearly holds for $n=1$. Further, $\dim H_1+\dim H_2\ge N+k$ implies by \Cref{le: int + join} that $\dim H_1\wedge H_2\ge k$, since $\dim H_1\wedge H_2\le N$. Now assume by induction that the statement holds for some $n\ge 2$. Let $\dim H_{n+1}=N-j$ for some integer $j$ and
\begin{align}
  \dim H_1+\cdots +\dim H_{n+1}  \ge nN+k.  
\end{align}
We can rewrite this inequality to
\begin{align}
  \dim H_1+\cdots +\dim H_{n}  \ge (n-1)N+k+j.  
\end{align}
By induction, $\dim H_{[n]}\ge k+j$ and it follows that 
\begin{align}
    \dim H_{[n]}+\dim H_{n+1}\ge N+k,
\end{align}
and we are done by the base case $n=2$. 
\end{proof}

We give provide a statement on the manipulation of intersections with joins, and note that it does not hold without the assumption that $V\subseteq W$.

\begin{lemma}\label{le: uvw} Let $U,V$ and $W$ be subspaces of $\PP^N$. If $V\subseteq W$, then 
\begin{align}
    W\wedge (V\vee U)=V\vee (W\wedge U).
\end{align}
\end{lemma}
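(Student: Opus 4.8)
## Proof plan for the modular law $W\wedge(V\vee U)=V\vee(W\wedge U)$ when $V\subseteq W$

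This is the classical \emph{modular law} for the lattice of subspaces, and the cleanest route is a direct double inclusion at the level of vector spaces (cones over the projective subspaces), so I will work with linear subspaces $U,V,W\subseteq\CC^{N+1}$ with $V\subseteq W$ and show $W\cap(V+U)=V+(W\cap U)$.

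The inclusion $\supseteq$ is the easy one and requires no hypothesis: $V\subseteq W$ gives $V\subseteq W$, and $W\cap U\subseteq W$, so $V+(W\cap U)\subseteq W$; also $V\subseteq V+U$ and $W\cap U\subseteq U\subseteq V+U$, so $V+(W\cap U)\subseteq V+U$. Intersecting, $V+(W\cap U)\subseteq W\cap(V+U)$.

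For the nontrivial inclusion $\subseteq$, I would take an arbitrary $x\in W\cap(V+U)$ and write $x=v+u$ with $v\in V$, $u\in U$. Then $u=x-v$; since $x\in W$ and $v\in V\subseteq W$, we get $u\in W$, hence $u\in W\cap U$. Therefore $x=v+u\in V+(W\cap U)$, which is exactly what is needed. This shows $W\cap(V+U)\subseteq V+(W\cap U)$, and combined with the previous paragraph gives equality of vector spaces, hence equality of the corresponding projective subspaces. I would also remark, as the statement suggests, that the hypothesis $V\subseteq W$ is essential precisely at the step ``$v\in V\subseteq W$'': without it, $u=x-v$ need not lie in $W$, and a concrete small counterexample (e.g. three generic lines through the origin in $\CC^2$, or three concurrent-but-not-nested lines) shows the identity fails.

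There is no real obstacle here — the only thing to be careful about is the translation between projective subspaces (as in the statement) and their affine cones in $\CC^{N+1}$, where $\vee$ becomes $+$ and $\wedge$ becomes $\cap$; once that dictionary is in place the argument is the three-line computation above. One could alternatively phrase it via dimension count using \Cref{le: int + join}, but the elementwise argument is shorter and also pins down exactly where $V\subseteq W$ is used.
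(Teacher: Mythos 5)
Your proof is correct and follows essentially the same route as the paper's: pass to affine cones in $\CC^{N+1}$ and argue by double inclusion, with the key step being that $u=x-v\in W$ because $x\in W$ and $v\in V\subseteq W$. The only cosmetic difference is that you absorb the scalars into $v$ and $u$, which cleanly avoids the paper's case split on whether the $U$-coefficient vanishes.
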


\begin{proof} We perform the proof by identifying $U,V$ and $W$ with their affine cones in $\CC^{N+1}$. 

$\subseteq)$ Let $x\in W\wedge (V\vee U)$. Then $x\in W$ and $x\in V\vee U$, and we can write $x=\lambda v+ \mu u$ for $v\in V, u\in U$ and two constants $\lambda,\mu$. If $\mu=0$, then $x\in V$ and in particular $x\in V\vee (W\wedge U) $. Otherwise, recall that $v\in W$, since $V\subseteq W$. This implies that $u=(x-\lambda v)/\mu\in W$. Then by definition, $x=\lambda v +\mu u\in V\vee (W\wedge U) $.

$\supseteq)$ Take $x\in V\vee (W\wedge U)$. Then $x=\lambda v+\mu u$, where $v\in V\subseteq W$ and $u\in W\wedge U$. This implies that $x\in W$ and $x\in V\vee U$. 
\end{proof}

For the next result we introduce new notation. Given a subspace $V$ of $\PP^N$, we let a ``dual'' $V^*$ denote a subspace of dimension $N-\dim V-1$ that is disjoint from $V$.

\begin{lemma}\label{le: U vee V*} Let $V\subseteq W$ be an inclusion subspaces of $\PP^N$. Then 
\begin{align}
    \dim W\wedge V^*=\dim W-\dim V-1.
\end{align}
\end{lemma}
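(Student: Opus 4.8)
The plan is to reduce the statement to \Cref{le: int + join} by rewriting $W \wedge V^*$ in a way that lets us compute both its join partner and the ambient join. Since $V \subseteq W$, \Cref{le: uvw} applies with the roles $U = V^*$, and the inclusion $V \subseteq W$ giving
\begin{align}
W \wedge (V \vee V^*) = V \vee (W \wedge V^*).
\end{align}
Now $V \vee V^* = \PP^N$ because $V^*$ has dimension $N - \dim V - 1$ and is disjoint from $V$, so $\dim(V \vee V^*) = \dim V + (N - \dim V - 1) + 1 = N$. Hence the left-hand side is just $W$, and we obtain $W = V \vee (W \wedge V^*)$.

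The next step is to apply \Cref{le: int + join} to the two subspaces $V$ and $W \wedge V^*$. Their join is $W$, which we just established, so
\begin{align}
\dim V \wedge (W \wedge V^*) + \dim W = \dim V + \dim (W \wedge V^*).
\end{align}
It remains to observe that $V \wedge (W \wedge V^*) = (V \wedge W) \wedge V^* = V \wedge V^* = \emptyset$, using $V \subseteq W$ in the middle equality and disjointness of $V$ and $V^*$ at the end; with the convention $\dim \emptyset = -1$ this gives $-1 + \dim W = \dim V + \dim(W \wedge V^*)$, i.e. $\dim (W \wedge V^*) = \dim W - \dim V - 1$, as claimed.

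I do not expect a serious obstacle here; the argument is a short chain of the already-established lemmas together with the bookkeeping $\dim(V \vee V^*) = N$. The only point deserving a line of care is the verification that $V$ and $W \wedge V^*$ are genuinely the right inputs for \Cref{le: int + join} — in particular that their join really is all of $W$ and not something larger contained in $W$ — which is exactly what the displayed identity $W = V \vee (W \wedge V^*)$ records, so no gap arises. One should also note in passing that the statement is independent of the particular choice of dual $V^*$, which is automatic since the dimension formula only involves $\dim V^* = N - \dim V - 1$.
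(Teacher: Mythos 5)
Your argument is correct. It takes a mildly different route from the paper's: the paper applies \Cref{le: int + join} directly to the pair $(W, V^*)$, observing that $V\subseteq W$ forces $W\vee V^*\supseteq V\vee V^*=\PP^N$, and then reads off $\dim W\wedge V^* = \dim W + \dim V^* - N$. You instead first invoke \Cref{le: uvw} to establish the decomposition $W = V\vee(W\wedge V^*)$ and then apply \Cref{le: int + join} to the pair $(V, W\wedge V^*)$, using that their intersection is empty. Both proofs are one application of the dimension formula plus the observation that a certain join fills the ambient space; the paper's is marginally shorter, while yours proves the slightly stronger structural fact that $W\wedge V^*$ is a complement of $V$ inside $W$ (disjoint from $V$ and spanning $W$ together with it), of which the dimension count is a corollary. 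No gap in either direction.
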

\begin{proof} First note that $\PP^N=V\vee V^*\subseteq W\vee V^*$, implying that $W\vee V^*=\PP^N$. Then by \Cref{le: int + join},
\begin{align}
    \dim W\wedge V^*=\dim W+\dim V^*-\dim W\vee V^*,
\end{align}
which simplifies to the statement.
\end{proof}

The next lemma is a simple observation that we use to work inside a space of lower dimension; allowing for induction arguments over $N$. We leave the straight-forward proof to the reader.

\begin{lemma}\label{le: isomo onto *} Let $c_0\subseteq c$ be an inclusion of subspaces of $\PP^N$, and let $d$ be an integer $\dim c\le d\le N$. There is an isomorphism between 
\begin{enumerate}
    \item $d$-planes $H$ through $c$, and 
    \item $(d-\dim c_0-1)$-planes inside $c_0^*$ through $c':=c\wedge c_0^*$,
\end{enumerate}
via the map $H\mapsto H':=H\wedge c_0^*$, with inverse $H'\mapsto H=c_0\vee H'$.
\end{lemma}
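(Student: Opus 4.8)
The plan is to verify that the two maps $H \mapsto H' := H \wedge c_0^*$ and $H' \mapsto c_0 \vee H'$ are mutually inverse bijections between the set of $d$-planes through $c$ and the set of $(d - \dim c_0 - 1)$-planes inside $c_0^*$ through $c' := c \wedge c_0^*$. Since both maps are visibly restrictions of the (Plücker-)linear join and intersection operations, establishing the bijection on points will give the isomorphism. First I would check that the maps are well-defined: if $H$ is a $d$-plane through $c$, then $c_0 \subseteq c \subseteq H$, so $H \wedge c_0^* = H'$ has dimension $\dim H - \dim c_0 - 1 = d - \dim c_0 - 1$ by \Cref{le: U vee V*} (applied with $V = c_0$, $W = H$), and $H'$ contains $c \wedge c_0^* = c'$ since $c \subseteq H$. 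Conversely, if $H'$ is a $(d - \dim c_0 - 1)$-plane inside $c_0^*$ containing $c'$, then $c_0 \vee H'$ has dimension $\dim c_0 + (d - \dim c_0 - 1) + 1 = d$ provided $c_0$ and $H'$ are disjoint; this disjointness holds because $H' \subseteq c_0^*$ and $c_0 \wedge c_0^* = \emptyset$ by definition of the dual. Moreover $c = c_0 \vee c' \subseteq c_0 \vee H'$, using that $c \subseteq c_0 \vee (c \wedge c_0^*)$, which follows from \Cref{le: uvw} with $U = c$, $V = c_0$, $W = c_0 \vee c_0^* = \PP^N$: indeed $c = \PP^N \wedge (c_0 \vee c) = c_0 \vee (\PP^N \wedge c) $ — wait, more carefully one wants $c \subseteq c_0 \vee (c \wedge c_0^*)$, which is an instance of the modular-law inclusion since $c_0 \subseteq c$.

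Next I would verify the two composites are the identity. For $H$ a $d$-plane through $c$: we must show $c_0 \vee (H \wedge c_0^*) = H$. Since $c_0 \subseteq H$, \Cref{le: uvw} (with $V = c_0 \subseteq W = H$, $U = c_0^*$) gives $H \wedge (c_0 \vee c_0^*) = c_0 \vee (H \wedge c_0^*)$, and the left side is $H \wedge \PP^N = H$. For $H'$ a $(d-\dim c_0 - 1)$-plane in $c_0^*$ through $c'$: we must show $(c_0 \vee H') \wedge c_0^* = H'$. Again by \Cref{le: uvw} with $V = c_0 \subseteq W := c_0^* \vee H' = c_0^*$ (since $H' \subseteq c_0^*$) — hmm, this needs $c_0 \subseteq c_0^*$, which is false. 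Instead I would argue directly: clearly $H' \subseteq (c_0 \vee H') \wedge c_0^*$. For the reverse, both sides lie in $c_0^*$, and $\dim (c_0 \vee H') \wedge c_0^* = \dim(c_0 \vee H') - \dim c_0 - 1 = d - \dim c_0 - 1 = \dim H'$ by \Cref{le: U vee V*} (with $V = c_0 \subseteq W = c_0 \vee H'$), so the inclusion of equidimensional subspaces is an equality.

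Finally, since the bijection is realized by the linear maps $\wedge^{\bullet}$ of joining with $c_0$ and intersecting with $c_0^*$ (both linear in Plücker coordinates, as in \Cref{le: lin isomo}), it is an isomorphism of varieties, and restricting to the relevant sub-Grassmannians gives the claimed isomorphism.

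The main obstacle I anticipate is the bookkeeping around \Cref{le: uvw}: that lemma requires one subspace to be contained in another, and in the second composite the naive application fails because $c_0 \not\subseteq c_0^*$. The fix is to replace that step by a dimension count using \Cref{le: U vee V*} together with the trivial inclusion $H' \subseteq (c_0 \vee H') \wedge c_0^*$, so the argument stays entirely within linear algebra. A secondary point requiring care is confirming the two target sets are exactly $\{H' : c' \subseteq H' \subseteq c_0^*,\ \dim H' = d - \dim c_0 - 1\}$ and $\{H : c \subseteq H,\ \dim H = d\}$, and that the maps land where claimed — handled by the well-definedness checks above.
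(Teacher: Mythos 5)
Your proof is correct and follows essentially the same route as the paper's: the key step in both is the modular-law identity of \Cref{le: uvw} giving $H = c_0 \vee (H\wedge c_0^*)$ and $c = c_0\vee c'$ (your wobble there resolves cleanly by applying \Cref{le: uvw} with $V=c_0$, $W=c$, $U=c_0^*$), combined with the dimension counts from \Cref{le: int + join}/\Cref{le: U vee V*}. Your explicit verification that $(c_0\vee H')\wedge c_0^* = H'$ via equality of dimensions is a small completeness bonus that the paper leaves implicit.
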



Intersections and joins behave as expected with respect to this isomorphism of \Cref{le: isomo onto *}:

\begin{lemma}\label{le: int join *}
Let $\textit{\textbf c}$ be a center arrangement. Let $H_i$ denote subspaces through $c_i$, and write $H_i':=H_i\wedge c_{[n]}^*$. For a set of indices $I,J\subseteq [n]$, we have
\begin{align}
\begin{aligned}\label{eq: int and join}
   H_I\wedge H_J&=c_{[n]}\vee (H_I'\wedge H_J'),\\
   H_I\vee H_J&= c_{[n]}\vee (H_I'\vee H_J').   
\end{aligned}
\end{align}
\end{lemma}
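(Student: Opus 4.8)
The statement concerns the isomorphism $H \mapsto H' := H \wedge c_{[n]}^*$ of \Cref{le: isomo onto *}, applied with $c_0 = c_{[n]}$ (so that $c_0^* = c_{[n]}^*$). Note that each $c_i$ contains $c_{[n]}$, hence each $H_i \supseteq c_i \supseteq c_{[n]}$, so the hypotheses of \Cref{le: isomo onto *} are met for every $H_i$. Moreover, for any $I \subseteq [n]$ the intersection $H_I = \wedge_{i \in I} H_i$ also contains $c_{[n]}$, so $H_I$ too is a subspace ``through $c_{[n]}$'' and the notation $H_I' := H_I \wedge c_{[n]}^*$ is consistent with applying the isomorphism to $H_I$ itself. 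The plan is to prove the two identities in \eqref{eq: int and join} essentially by reducing everything to the single key fact $H = c_{[n]} \vee H'$ (valid for any subspace $H$ between $c_{[n]}$ and $\PP^N$, by \Cref{le: isomo onto *} or directly \Cref{le: uvw}), together with \Cref{le: uvw} used to commute $c_{[n]}^* \wedge (-)$ past joins with subspaces of $c_{[n]}^*$.

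For the join identity, I would argue: since $H_I' = H_I \wedge c_{[n]}^* \subseteq c_{[n]}^*$ and likewise $H_J' \subseteq c_{[n]}^*$, the join $H_I' \vee H_J'$ lies inside $c_{[n]}^*$; hence $c_{[n]}^* \vee (H_I' \vee H_J') = c_{[n]}^* \vee H_I' \vee H_J'$ — but this is not immediately $H_I \vee H_J$. Instead, the clean route is: $H_I \vee H_J = (c_{[n]} \vee H_I') \vee (c_{[n]} \vee H_J') = c_{[n]} \vee (H_I' \vee H_J')$. Now I want to replace the leading $c_{[n]}$ by $c_{[n]}^*$. This follows because $H_I' \vee H_J'$ is a subspace of $c_{[n]}^*$ containing $c' := c_{[n]} \wedge c_{[n]}^*$ (indeed each $H_i' \supseteq c_i \wedge c_{[n]}^* \supseteq c_{[n]} \wedge c_{[n]}^* = c'$ by \Cref{le: uvw} applied as $c_i \wedge c_{[n]}^* = $ image of $c_i$ under the isomorphism, which contains $c'$), so $c_{[n]} \vee (H_I' \vee H_J') = (c_{[n]} \vee c') \vee \cdots$; wait — more directly, the point is that applying the isomorphism $G \mapsto c_{[n]} \vee G$ to $G = H_I' \vee H_J'$ lands in the space of subspaces through $c_{[n]}$, and $c_{[n]}^* \vee G = \PP^N \wedge (c_{[n]}^* \vee G)$ is not what we want either. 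Let me instead state the honest reduction: I claim $c_{[n]} \vee G = c_{[n]}^* \vee G$ is \emph{false} in general, so the correct reading of \eqref{eq: int and join} must be that $H_I \vee H_J$, being a subspace through $c_{[n]}$, equals $c_{[n]} \vee (H_I \vee H_J)'$ where $(H_I \vee H_J)' = (H_I\vee H_J)\wedge c_{[n]}^*$, and the content is the identity $(H_I \vee H_J)' = H_I' \vee H_J'$. Thus I would prove \eqref{eq: int and join} in the equivalent form $(H_I \wedge H_J)' = H_I' \wedge H_J'$ and $(H_I \vee H_J)' = H_I' \vee H_J'$, and then apply $G \mapsto c_{[n]} \vee G$; but since the paper writes $c_{[n]}^* \vee$, I will follow its convention and interpret ``$c_{[n]}^* \vee$'' as shorthand justified by the isomorphism — concretely, I will show both sides are the same subspace through $c_{[n]}$ by intersecting with $c_{[n]}^*$ and using injectivity of \Cref{le: isomo onto *}.

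\textbf{Key steps.} (1) Reduce to proving $(H_I \wedge H_J) \wedge c_{[n]}^* = (H_I \wedge c_{[n]}^*) \wedge (H_J \wedge c_{[n]}^*)$, which is trivial (intersection is associative/commutative, and both sides equal $H_I \wedge H_J \wedge c_{[n]}^*$), and similarly $(H_I \vee H_J) \wedge c_{[n]}^* = (H_I \wedge c_{[n]}^*) \vee (H_J \wedge c_{[n]}^*)$. (2) For the second of these — the genuinely nontrivial one — apply \Cref{le: uvw} with $V = H_I \wedge c_{[n]}^*$ (which satisfies $V \subseteq c_{[n]}^* =: W$, taking that role) and $U = H_J$: this gives $c_{[n]}^* \wedge ((H_I \wedge c_{[n]}^*) \vee H_J) = (H_I \wedge c_{[n]}^*) \vee (c_{[n]}^* \wedge H_J)$. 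I then need $(H_I \wedge c_{[n]}^*) \vee H_J = (H_I \vee H_J) \wedge c_{[n]}^*$... which again requires care; the cleaner path is to first use $H_I = c_{[n]} \vee H_I'$ so that $H_I \vee H_J = (c_{[n]} \vee H_I') \vee H_J = H_I' \vee (c_{[n]} \vee H_J) = H_I' \vee H_J$ (since $c_{[n]} \subseteq H_J$), then intersect with $c_{[n]}^*$ and apply \Cref{le: uvw} with $W = c_{[n]}^*$, $V = H_I'$, $U = H_J$: $c_{[n]}^* \wedge (H_I' \vee H_J) = H_I' \vee (c_{[n]}^* \wedge H_J) = H_I' \vee H_J'$. (3) Finally lift back: since both $H_I \wedge H_J$ and $c_{[n]}^* \vee (H_I' \wedge H_J')$... again I must be careful that $c_{[n]}$ and $c_{[n]}^*$ both recover the same subspace; here I use that $H_I \wedge H_J \supseteq c_{[n]}$ and invoke the inverse map $G \mapsto c_{[n]} \vee G$ of \Cref{le: isomo onto *} — but the display writes $c_{[n]}^*$, so the correct final move is: both sides of each line of \eqref{eq: int and join} are subspaces containing $c_{[n]}^*$? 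No. I will resolve this by reading \eqref{eq: int and join} with $c_{[n]}^*\vee$ meaning the reconstruction operator matching \Cref{le: isomo onto *}, noting $c_{[n]}$ and $c_{[n]}^*$ play symmetric roles there only up to the isomorphism, and present the argument via steps (1)–(2) plus injectivity of $H \mapsto H \wedge c_{[n]}^*$ on subspaces through $c_{[n]}$.

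\textbf{Main obstacle.} The only real subtlety is bookkeeping about which ambient reconstruction ($c_{[n]} \vee -$ versus the literal $c_{[n]}^* \vee -$ appearing in the display) makes the stated equation literally true, and verifying that every subspace in sight ($H_I$, $H_J$, $H_I \wedge H_J$, $H_I \vee H_J$) genuinely contains $c_{[n]}$ so that \Cref{le: isomo onto *} applies and $H_I' \vee H_J' \supseteq c_{[n]} \wedge c_{[n]}^*$; once that is pinned down, the two identities follow from \Cref{le: uvw} applied with $W = c_{[n]}^*$ together with $H_i = c_{[n]} \vee H_i'$, with no computation beyond rearranging joins and intersections.
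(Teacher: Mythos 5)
Your proof is correct and follows essentially the same route as the paper's — both rest on the decomposition $H = c_{[n]}\vee(H\wedge c_{[n]}^*)$ from \Cref{le: uvw} together with the isomorphism of \Cref{le: isomo onto *}, the only difference being that you obtain the join identity directly from \Cref{le: uvw} (via $H_I\vee H_J = H_I'\vee H_J$, then intersecting with $c_{[n]}^*$), while the paper deduces it from the intersection identity by a dimension count using \Cref{le: int + join}. Your diagnosis of the display is also right: read literally, $c_{[n]}^*\vee(\cdots)$ would make both right-hand sides collapse to $c_{[n]}^*$ (since $H_I'\wedge H_J'$ and $H_I'\vee H_J'$ lie inside $c_{[n]}^*$), so the intended operator is $c_{[n]}\vee(\cdots)$, exactly as the paper's own proof writes in the step $H_I\wedge H_J=c_{[n]}\vee(H_{I\cup J}\wedge c_{[n]}^*)$.
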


\begin{proof} The left-hand sides of \eqref{eq: int and join} contain $c_{[n]}$ and $H_I'\wedge H_J'$ and $H_I'\vee H_j'$, respectively, which span the right-hand sides.

For the other inclusions, we start with the first equality. We can write $H_I\wedge H_J$ as $c_{[n]}\vee (H_{I\cup J}\wedge c_{[n]}^*)$ by \Cref{le: uvw}. It can be readily checked that $(H_{I\cup J}\wedge c_{[n]}^*)$ equals $ H_I'\wedge H_J'$. For the second equation, it suffices to check that the dimensions of $H_I\vee H_J$ and $c_{[n]}^*\vee (H_I'\vee H_J')$ are the same, but this follows from the first equality using \Cref{le: int + join}.
\end{proof}

Our final result from this section tell us the dimension of intersections of generic subspaces through fixed centers.

\begin{lemma}\label{le: subsp dim formula} Let $c_1\subseteq H_1$ and $c_2\subseteq H_2$ be inclusions of subspaces of $\PP^N$, where $c_i$ are fixed and $H_i$ generic through $c_i$, respectively. Then 
   \begin{align}\label{eq: H1 H2 int gen}
    \dim H_1 \wedge H_2= \max \{ \dim c_1 \wedge c_2,\dim H_1+ \dim H_2-N \}.
\end{align}  
\end{lemma}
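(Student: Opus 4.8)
The plan is to prove the two inequalities ``$\le$'' and ``$\ge$'' separately, where the max on the right-hand side forces us to argue that \emph{both} $\dim c_1\wedge c_2$ and $\dim H_1+\dim H_2-N$ are lower bounds for $\dim H_1\wedge H_2$, and that at least one of them is attained for generic $H_i$. First I would record the two easy lower bounds, valid for \emph{any} $H_i$ through $c_i$: since $c_1\wedge c_2\subseteq H_1\wedge H_2$ we get $\dim H_1\wedge H_2\ge\dim c_1\wedge c_2$, and by \Cref{le: int + join} together with $\dim H_1\vee H_2\le N$ we get $\dim H_1\wedge H_2\ge\dim H_1+\dim H_2-N$. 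Hence the max is always a lower bound, and it remains to show that for generic $H_i$ the intersection is no larger than this max.

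For the upper bound I would use the isomorphism machinery of \Cref{s: Geo Int Joins} to reduce to a cleaner situation. Set $c:=c_1\vee c_2$ and $c_0:=c_1\wedge c_2$; actually the most convenient reduction is to pass to $c_1^*$ or to quotient out $c_1\wedge c_2$. Concretely, apply \Cref{le: isomo onto *} with the subspace $c_1\wedge c_2$ (or handle the two cases $\dim c_1\wedge c_2\ge \dim H_1+\dim H_2-N$ and the reverse): intersecting everything with $(c_1\wedge c_2)^*$ replaces $H_i$ by $H_i'$ through $c_i':=c_i\wedge(c_1\wedge c_2)^*$, where now $c_1'\wedge c_2'=\emptyset$, and by \Cref{le: int join *} (applied with $n$ taken to be these two centers, so $c_{[n]}=c_1\wedge c_2$) we have $\dim H_1\wedge H_2=\dim (c_1\wedge c_2)+1+\dim(H_1'\wedge H_2')$ while the right-hand side of \eqref{eq: H1 H2 int gen} transforms compatibly (using \Cref{le: int + join} to track how $\dim c_1+\dim c_2-N$ and $\dim H_i$ change). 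So it suffices to prove the statement when $c_1\wedge c_2=\emptyset$, i.e. to show that generic $H_i\supseteq c_i$ with disjoint centers satisfy $\dim H_1\wedge H_2=\max\{-1,\ \dim H_1+\dim H_2-N\}$ — in other words, that they meet as transversally as the containment of the centers allows.

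For that reduced claim I would argue as follows. Write $H_i=c_i\vee P_i$ where $P_i$ is a generic subspace of the appropriate dimension $\dim H_i-\dim c_i-1$ disjoint from $c_i$ (possible by the dimension hypotheses; a generic complement works). Then $H_1\vee H_2=c_1\vee c_2\vee P_1\vee P_2$, and since all the pieces $c_1,c_2,P_1,P_2$ can be chosen in ``general position'' — each new generic subspace of dimension $d$ added to an existing subspace of dimension $e$ spans a space of dimension $\min\{N,d+e+1\}$, which is a standard genericity fact provable by a determinantal/Plücker argument (a single explicit choice showing the relevant maximal minor is nonzero suffices, since non-maximal-rank is a proper Zariski-closed condition) — we get $\dim H_1\vee H_2=\min\{N,\ \dim H_1+\dim H_2+1\}$. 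Feeding this back into \Cref{le: int + join} gives $\dim H_1\wedge H_2=\dim H_1+\dim H_2-\dim H_1\vee H_2=\max\{\dim H_1+\dim H_2-N,\ -1\}$, which is exactly what we want. Finally I would undo the reduction to conclude the general formula.

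The main obstacle I expect is making the ``general position'' step fully rigorous and keeping the bookkeeping straight through the reduction: one must be careful that ``$H_i$ generic through $c_i$'' really does let us write $H_i=c_i\vee P_i$ with $P_i$ generic and independent of the other data, and that the genericity of the $P_i$ (for the \emph{fixed} $c_i$) is enough to force $\dim H_1\vee H_2$ to be maximal — this is where one actually needs a Zariski-openness argument rather than hand-waving, exhibiting one configuration attaining the maximum. The reduction via \Cref{le: isomo onto *,le: int join *} is routine but needs the dimension identities tracked carefully so that the transformed right-hand side genuinely matches.
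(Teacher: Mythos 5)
Your proposal is correct and follows essentially the same route as the paper: the identical reduction to the case $c_1\wedge c_2=\emptyset$ via \Cref{le: isomo onto *} and \Cref{le: int join *}, followed by a transversality-of-generic-subspaces argument combined with \Cref{le: int + join}. The only cosmetic difference is in the base case, where the paper moves the disjoint centers into general position by a linear transformation and treats $H_1,H_2$ as outright generic, while you keep the centers fixed and write $H_i=c_i\vee P_i$ with generic complements $P_i$ — both are standard and equally valid.
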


\begin{proof} Assume first that $c_1\wedge c_2=\emptyset$. By linear transformations, we can take any two disjoint subspaces to any other two disjoint subspaces of the same dimensions. Therefore we can take $c_1$ and $c_2$ to be generic, and in particular consider $H_1$ and $H_2$ to be generic in $\PP^N$. Generic $H_1$ and $H_2$ correspond to choices of generic vectors $v_0,\ldots, v_r$ and  $w_0,\ldots,w_s$, where $r=\dim H_1$ and $s=\dim H_2$. If $s+r+2\ge N+1$, then $\dim H_1\vee H_2=N$, and otherwise $\dim H_1\wedge H_2=\emptyset$. In the former case, \Cref{le: int + join} gives us the statement. In the latter case, we have $\dim H_1+\dim H_2<N$ and therefore $\dim H_1\wedge H_2=-1$ satisfies \eqref{eq: H1 H2 int gen} even in this case.  

Assume second that $c_1\wedge c_2\neq \emptyset$. We use \Cref{le: isomo onto *}, namely, we write $H_i':=H_i\wedge (c_1\wedge c_2)^*$. If $H_i$ is generic through $c_i$, then $H_i'$ is generic through $c_i':=c_i\wedge (c_1\wedge c_2)^*$ inside $(c_1\wedge c_2)^*$. Observe that $c_i'$ are disjoint: $c_1'\wedge c_2'= (c_1\wedge c_2)\wedge (c_1\wedge c_2)^*=\emptyset$. Then, by the first part of the proof and the fact that $\dim H_i'=\dim H_i-\dim c_1\wedge c_2-1$, 
\begin{align}\begin{aligned}\label{eq: in *}
    \dim H_1'\wedge H_2' &=\max \{ -1,\dim H_1'+ \dim H_2'-\dim (c_1\wedge c_2)^* \}\\
    &=\max \{ -1,\dim H_1+ \dim H_2-N-\dim c_1\wedge c_2-1 \}.
\end{aligned}
\end{align}
Then by \Cref{le: int join *}, $\dim H_1\wedge H_2= \dim H_1'\wedge H_2'+\dim c_1\wedge c_2+1$. Plugging this into \eqref{eq: in *}, we are done.
\end{proof}



\section{Triangulability}\label{s: Tri} 
Triangulation in the context of Computer Vision should not be confused with triangulation of topological spaces. Instead, it refers to reconstructing a world feature based on (noisy) data of feature matches, as described in the introduction. In this paper, we study which multiview varieties can be used for triangulation. The requirement is that the projection map $\Phick$ is generically injective, or equivalently that it is dominant onto its image. We give two distinct characterization of this property. In the process, we describe the dimensions of multiview varieties.

First we need new notation. For a center arrangement $\textit{\textbf c}$ and a non-negative integer $k$, define 
\begin{align}
   \ell_{\ca,k}&:=\min_{(H_1,\ldots,H_n)\in \mathcal M_{\ca,k}} \dim H_{[n]}
\end{align}
The value $\lck$ has the following natural description.

\begin{lemma}\label{le: semi-upper} $\ell_{\textbf c,k}$ is the dimension of 
\begin{align}\label{eq: wedge c_i vee P}
  (c_1\vee P)\wedge \cdots \wedge (c_n\vee P)
\end{align}
for generic $P\in \Gr(k,\PP^N)$.
\end{lemma}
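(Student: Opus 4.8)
The statement asserts that the minimum of $\dim H_{[n]}$ over the multiview variety equals the \emph{generic} value of $\dim (c_1\vee P)\wedge\cdots\wedge(c_n\vee P)$ for $P\in\Gr(k,\PP^N)$. The natural strategy is semicontinuity: the function $P\mapsto \dim\big((c_1\vee P)\wedge\cdots\wedge(c_n\vee P)\big)$, wherever $\Phi_{\ca,k}$ is defined, is an upper-semicontinuous function of $P$ (intersection dimension can only jump up on closed sets, equivalently the rank of the relevant matrix of spanning vectors is lower-semicontinuous). Hence its value on the dense open locus of generic $P$ is the \emph{minimum} over all $P$ in the domain of $\Phi_{\ca,k}$.

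\paragraph{Key steps in order.} First I would set $d := \dim\big((c_1\vee P)\wedge\cdots\wedge(c_n\vee P)\big)$ for generic $P$; by Chevalley's theorem quoted after \eqref{eq: jointmap}, the image of $\Phi_{\ca,k}$ is dense in $\Mck$ in the Euclidean topology, and the generic fiber behaviour on $\Gr(k,\PP^N)$ transports to a dense subset of $\Mck$. Next, I would argue the inequality $\lck\le d$: since $\lck$ is a minimum over all of $\Mck$, in particular over the dense image locus where the value is $d$ for generic $P$, we get $\lck\le d$. The reverse inequality $\lck\ge d$ is the heart of the matter: I must show that no point of $\Mck$ — including limit points not in the image — can have $\dim H_{[n]} < d$. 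Here I would use that $H\mapsto\dim H_{[n]}$ is upper-semicontinuous on the product of Grassmannians $\prod_i\Gr(\dim c_i+k+1,\PP^N)$ (again because intersection dimension drops on an open set), so the locus $\{H : \dim H_{[n]}\ge d\}$ is Zariski closed; since it contains the dense image $\mathrm{Im}\,\Phi_{\ca,k}$, it contains its closure $\Mck$. Therefore every $H\in\Mck$ satisfies $\dim H_{[n]}\ge d$, giving $\lck\ge d$, and combined with the first inequality, $\lck=d$.

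\paragraph{Main obstacle.} The delicate point is the direction of the semicontinuity and that it applies to the \emph{whole} multiview variety, not just the image: I need that $\{H : \dim H_{[n]}\ge d\}$ is genuinely Zariski closed in the product of Grassmannians (this is standard — it is the degeneracy locus where the rank of a matrix built from Plücker data of the $H_i$ drops — but it should be stated cleanly), and that "generic $P$ achieves the minimum" really follows from lower-semicontinuity of the rank of the map whose kernel cuts out $\bigwedge_i(c_i\vee P)$. A secondary subtlety is making sure the generic value $d$ is well-defined, i.e. constant on a dense open subset of $\Gr(k,\PP^N)$ minus the indeterminacy locus of $\Phi_{\ca,k}$; this follows because $\Gr(k,\PP^N)$ is irreducible and the intersection dimension is constructible, hence constant on a dense open set. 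Once these facts are in place the proof is short.
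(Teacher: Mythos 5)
Your proposal is correct and rests on exactly the same ingredients as the paper's proof: upper-semicontinuity of the intersection dimension (both as a function of $P$ on $\Gr(k,\PP^N)$ and as a function of $H$ on the product of Grassmannians) together with density of $\mathrm{Im}\,\Phi_{\ca,k}$ in $\Mck$. The only difference is organizational — you argue the two inequalities $\lck\le d$ and $\lck\ge d$ directly, whereas the paper runs the same semicontinuity facts through a short contradiction argument — so this is essentially the same proof.
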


\begin{proof} Let $\mathcal U$ denote the set of $P\in \Gr(k,\PP^N)$ meeting no centers such that
\begin{align} \label{eq: intdim}
 \dim \bigwedge_{i=1}^n c_i\vee P\ge \ell_{\ca,k}+1.
\end{align}
This is a variety by the upper semi-continuity of dimensions \cite[Chapter 1, \S 8, Corollary 3]{mumford1999red}. If $\mathcal U= \Gr(k,\PP^N)$, then any $H\in \mathrm{Im}\; \Phick$ would have $\dim H_{[n]}\ge \lck+1$. This inequality would also hold in the closure $H\in \Mck$, which is a contradiction to the definition of $\lck$. Therefore, $\mathcal U$ is a proper subvariety of $\Gr(k,\PP^N)$ and for generic $P\in \Gr(k,\PP^N)$, we have that the dimension of \eqref{eq: wedge c_i vee P} is $\lck$.
\end{proof}

Triangulation is impossible if $\lck>k$, for the same reason that you can't reconstruct a point $X$ in $\PP^3$ based on one single projection $x$ of $X$ onto a camera plane $\PP^2$; there is a 1-dimensional family of points in the preimage, i.e. the back-projected line $H$, and any point in $H$ projects onto $x$. However, if $\lck=k$, then a generic element of the multiview variety can be triangulated. Indeed, since $P$ lies in \eqref{eq: wedge c_i vee P}, if $\lck=k$, then $P=H_{[n]}$, where $H_i=c_i\vee P$ are the back-projected planes. The property $\lck=k$ is also equivalent to $\Mck$ being birationally equivalent to $\Gr(k,\PP^n)$; the inverse rational map to $\Phick$ sends $(H_1,\ldots,H_n)$ to $P=H_{[n]}$. We have now motivated the titular definition of this section.

\begin{definition} We say that $\Mck$ is:
\begin{itemize}
    \item \textit{Triangulable} if a generic tuple $H\in \mathcal M_{\textit{\textbf c},k}$ can be triangulated, meaning $\lck=k$. 
\end{itemize}
\end{definition}

We have seen that $\lck=k$ is a desirable property with a natural interpretation. Another interpretation of $\lck$ is through the dimension of multiview varieties. 

\begin{proposition}\label{prop: dim} The multiview variety $\mathcal{M}_{\textbf c,k}$ is irreducible of dimension 
\begin{align}
  (k+1)(N-\ell_{\textbf c,k}).
\end{align}
\end{proposition}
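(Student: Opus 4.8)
The plan is to compute the dimension by exhibiting $\Phick$ as a map whose generic fibers have a controlled dimension, and then applying the fiber dimension theorem. Irreducibility is immediate: $\Gr(k,\PP^N)$ is irreducible, $\Phick$ is a rational map, and $\Mck$ is by definition the closure of the image of $\Phick$, hence irreducible.

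For the dimension, I would proceed as follows. First, the domain $\Gr(k,\PP^N)$ has dimension $(k+1)(N-k)$. Second, I would identify the generic fiber of $\Phick$. Given a generic tuple $H=(H_1,\dots,H_n)$ in the image, a $k$-plane $P$ maps to $H$ precisely when $c_i\vee P=H_i$ for every $i$; equivalently, $P\subseteq H_i$ for each $i$ (using $c_i\subseteq H_i$ and the dimension count in \Cref{le: CP c vee P}/\Cref{le: multiview id}, so that $P\subseteq H_i$ together with $P$ not meeting $c_i$ forces $c_i\vee P=H_i$), hence $P\subseteq H_{[n]}$. By \Cref{le: semi-upper}, for generic $P$ the intersection $H_{[n]}=(c_1\vee P)\wedge\cdots\wedge(c_n\vee P)$ has dimension exactly $\lck$. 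So the generic fiber of $\Phick$ is (an open dense subset of) the set of $k$-planes contained in a fixed $\lck$-plane, which is $\Gr(k,\PP^{\lck})$, of dimension $(k+1)(\lck-k)$. Third, by the fiber dimension theorem,
\begin{align}
\dim \Mck = \dim \Gr(k,\PP^N) - (k+1)(\lck-k) = (k+1)(N-k) - (k+1)(\lck-k) = (k+1)(N-\lck),
\end{align}
which is the claim.

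The main point requiring care is the precise description of the generic fiber — specifically, that for generic $H$ in the image the fiber really is all of $\Gr(k,H_{[n]})$ up to a proper closed subset, and not something smaller. One direction is clear: any $k$-plane $P\subseteq H_{[n]}$ not meeting the centers satisfies $c_i\vee P\subseteq H_i$, and by dimension ($\dim c_i\vee P=\dim c_i+k+1=\dim H_i$, the first equality holding since $P$ is disjoint from $c_i$) we get $c_i\vee P=H_i$, so $P$ is in the fiber. Conversely, any $P$ in the fiber lies in $H_{[n]}$ as noted above. The subtlety is that we need this fiber to be nonempty and of the expected dimension for a \emph{generic} point of $\Mck$, which is exactly what \Cref{le: semi-upper} buys us: the generic $P$ has $\dim H_{[n]}=\lck$, and such $P$ form a dense subset of $\Gr(k,\PP^N)$, so their images form a dense subset of $\Mck$ over which the fiber dimension is constant and equal to $(k+1)(\lck-k)$. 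One should also note the condition $P\subseteq H_{[n]}$ is compatible with $P$ avoiding the centers for generic choices, since $\dim c_i\wedge H_{[n]}$ is smaller than expected precisely on a proper subvariety; but for the dimension count only the generic fiber matters, so this is a routine transversality remark rather than a genuine obstacle.
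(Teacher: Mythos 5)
Your proposal is correct and follows essentially the same route as the paper: identify the generic fiber of $\Phick$ over a generic image point as (a dense open subset of) the $k$-planes contained in the $\lck$-dimensional intersection $H_{[n]}$, note via \Cref{le: semi-upper} that this intersection has dimension exactly $\lck$ for generic $P$, and apply the fiber dimension theorem to get $\dim\Mck=(k+1)(N-k)-(k+1)(\lck-k)$. Your extra care about the fiber being all of $\Gr(k,H_{[n]})$ up to a proper closed subset, and about generic $k$-planes in $H_{[n]}$ avoiding the centers, matches the remarks the paper itself makes in its proof.
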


\begin{proof} The dimension of the set of $k$-planes in a given $\lck$-plane is $(k+1)(\lck-k)$. Next, take a generic $k$-plane $P$ and let $H_i=c_i\vee P$. Then $\dim H_{[n]}=\lck$ by \Cref{le: semi-upper} and $P\in H_{[n]}$. Since $P$ does not meet any center, it follows that a generic $k$-plane in $H_{[n]}$ does not meet any center. Then the generic fiber of $\Phick$ is $(k+1)(\lck-k)$-dimensional. Since $\Phick$ is dominant onto $\Mck$, the fiber dimension theorem says
\begin{align}
 \dim \Mck+ (k+1)(\lck-k)=(k+1)(N-k),
\end{align}
and the statement follows.
\end{proof}

\begin{example} Consider a linear projection $\PP^3\dashrightarrow \PP^1\times \PP^1$, given by camera matrices with distinct line centers $c_1$ and $c_2$. Then $\mathcal{M}_{\textit{\textbf c},0}$ is not triangulable, because any two back-projected planes meet in this case exactly in a line, showing $\ell_{\ca,0}=1$.\hspace*{\fill}$\diamondsuit\,$ \end{example}

\begin{example} Consider a linear projection $\mathrm{Gr}(1,\PP^3)\dashrightarrow \Gr(1,\PP^2)^2$, given by camera matrices with distinct point centers $c_1$ and $c_2$. Then $\mathcal{M}_{\textit{\textbf c},1}$ is triangulable, because two back-projected planes generically meet in exactly a line, implying $\ell_{\ca,1}=1$. \hspace*{\fill}$\diamondsuit\,$ \end{example}

The main results of this section give a formulae for $\lck$ and in turn the dimension of $\Mck$. Before we state them, we provide two lemmas that furnish an induction argument on the number of cameras. In particular, we show the statement for center arrangements of two cameras in \Cref{le: lck n = 2}. The proof of this lemma is a variant of the proof of \Cref{le: subsp dim formula}. Recall that $h_i=N-\dim c_i-1$ by the multiview identities.

\begin{lemma}\label{le: lck n = 2} Let $\textit{\textbf c}=(c_1,c_2)$ be a center arrangement. Then
\begin{align}
    \lck = \max\{\dim c_1\wedge c_2+k+1,\dim c_1+\dim c_2+2k+2-N\}.
\end{align}
\end{lemma}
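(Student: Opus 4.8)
The plan is to compute $\lck$ for the two-camera case by reducing it, via a generic $k$-plane $P$, to the intersection of two subspaces through fixed centers, and then invoke \Cref{le: subsp dim formula}. By \Cref{le: semi-upper}, $\lck$ equals $\dim (c_1\vee P)\wedge(c_2\vee P)$ for generic $P\in\Gr(k,\PP^N)$. Write $H_i=c_i\vee P$; these are subspaces of dimension $\dim c_i+k+1$ containing the fixed subspace $P$ (well, $P$ is not fixed, but I will handle the genericity carefully) and the fixed center $c_i$. The key structural observation is that $H_1\wedge H_2\supseteq P$ always, so the intersection is never smaller than $k$; moreover $H_1\wedge H_2\supseteq (c_1\wedge c_2)\vee P$ whenever $c_1\wedge c_2\neq\emptyset$, which forces $\lck\geq \dim c_1\wedge c_2+k+1$ (using disjointness of $P$ from the centers so that $\dim (c_1\wedge c_2)\vee P=\dim c_1\wedge c_2+k+1$). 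This gives the ``$\geq$'' direction for the first term of the max, and the ``$\geq$'' for the second term will follow from \Cref{le: int + join} applied to $H_1,H_2$ since $\dim H_1\vee H_2\leq N$.

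For the upper bound I would pass to the quotient by $P$. Concretely, let $P^*$ be a complementary $(N-k-1)$-plane disjoint from $P$; since $P$ is generic, $P^*$ can be taken generic as well, and in particular $P$ meets no center means the images of the centers behave generically. Using \Cref{le: isomo onto *} (with $c_0=P$), the $(\dim c_i+k+1)$-planes $H_i$ through $P$ correspond bijectively to the $\dim c_i$-planes $H_i'=H_i\wedge P^*$ inside $P^*$, and by \Cref{le: int join *} we get $\dim H_1\wedge H_2=\dim H_1'\wedge H_2'+\dim P+1=\dim H_1'\wedge H_2'+k+1$. Now $H_i'$ is the image of $c_i$ under a generic projection from $P$ into $P^*$, hence $H_i'$ is a generic $\dim c_i$-plane through the fixed center $c_i'=c_i\vee P\wedge P^*$ — wait, more carefully: $H_i'$ contains the projected center but since $H_i=c_i\vee P$ exactly, $H_i'$ is determined, namely $H_i'=(c_i\vee P)\wedge P^*$, which for generic $P$ is a generic $\dim c_i$-plane in $P^*$ (the projection of $c_i$ from the generic center $P$). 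So $H_1'$ and $H_2'$ are two independent generic subspaces of $P^*\cong\PP^{N-k-1}$ of dimensions $\dim c_1$ and $\dim c_2$. I would then apply \Cref{le: subsp dim formula} inside $P^*$: $\dim H_1'\wedge H_2'=\max\{\dim c_1'\wedge c_2',\ \dim c_1+\dim c_2-(N-k-1)\}$ where $c_i'$ denotes the projection of $c_i$. For the generic projection, $\dim c_1'\wedge c_2'=\max\{-1,\dim c_1\wedge c_2+\dim P+1 - \dim P\vee\cdots\}$... actually I should instead directly observe that the projections $c_i'$ of $c_1,c_2$ from a generic $P$ satisfy $\dim c_1'\wedge c_2'=\max\{\dim c_1\wedge c_2,\ \dim c_1+\dim c_2-(N-k-1)\}$ by the same genericity argument, or simply that $(c_1\vee P)\wedge(c_2\vee P)\wedge P^*$ has the expected dimension. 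Combining, $\dim H_1'\wedge H_2'=\max\{\dim c_1\wedge c_2,\ \dim c_1+\dim c_2+k+1-N\}$, and adding $k+1$ yields the claimed formula.

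The main obstacle I anticipate is bookkeeping the genericity correctly: I need that when $P$ is generic in $\Gr(k,\PP^N)$, the resulting subspaces $H_i'=(c_i\vee P)\wedge P^*$ inside $P^*$ are genuinely ``generic through a fixed center'' in the precise sense required by \Cref{le: subsp dim formula}, even though the ``center'' here (the image of $c_i$ under projection from $P$) moves with $P$. The cleanest fix is to argue via an irreducible family: $\lck$ is defined as a minimum over $\Mck$, which by \Cref{le: semi-upper} is attained generically, so it suffices to exhibit one $P$ (equivalently a dense set of $P$) realizing the asserted value, and to separately prove the two lower-bound inequalities (which hold for \emph{every} valid $P$). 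The lower bounds are immediate from the containments $P\subseteq H_1\wedge H_2$ and $(c_1\wedge c_2)\vee P\subseteq H_1\wedge H_2$ together with \Cref{le: int + join}; the upper bound needs only a single good $P$, which exists by a dimension count analogous to the proof of \Cref{le: subsp dim formula}. I would structure the write-up as: (1) lower bound for both terms, holding always; (2) construction of a generic $P$ and reduction to $P^*$ via \Cref{le: isomo onto *,le: int join *}; (3) application of \Cref{le: subsp dim formula} (or a direct generic dimension count) inside $P^*$; (4) combine.
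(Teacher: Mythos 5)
Your two lower bounds are fine and are exactly what is needed: $(c_1\wedge c_2)\vee P\subseteq H_1\wedge H_2$ gives $\lck\ge \dim c_1\wedge c_2+k+1$ (using that a generic $P$ misses $c_1\wedge c_2$), and \Cref{le: int + join} together with $\dim H_1\vee H_2\le N$ gives the second term. The gap is in the upper bound. After passing to $P^*$ you first assert that $H_1'=(c_1\vee P)\wedge P^*$ and $H_2'=(c_2\vee P)\wedge P^*$ are ``two independent generic subspaces'' of $P^*$. This is false in general: both $H_i'$ contain the projection of $c_1\wedge c_2$ from $P$, which has dimension $\dim c_1\wedge c_2$, so whenever $\dim c_1\wedge c_2>\dim c_1+\dim c_2-(N-k-1)$ they meet in strictly more than two independent generic subspaces of those dimensions would. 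You notice this mid-argument and replace the assertion by ``the projections of $c_1,c_2$ from a generic $P$ meet in dimension $\max\{\dim c_1\wedge c_2,\ \dim c_1+\dim c_2-(N-k-1)\}$'' --- but after adding $k+1$ that is precisely the statement of the lemma, so at this point you have restated the claim rather than proved it. \Cref{le: subsp dim formula} cannot be cited for it, because there the two subspaces vary independently and generically through fixed centers, whereas here $H_1'$ and $H_2'$ are both functions of the single parameter $P$ and are correlated. The ``single good $P$ obtained by a dimension count analogous to \Cref{le: subsp dim formula}'' that you defer to is exactly the missing content and is never produced.

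For comparison, the paper closes this hole by reversing the order of the choices: in the disjoint case it uses that any two disjoint subspaces can be carried to any other two of the same dimensions by a linear map (and $\lck$ is a linear invariant), so $c_1,c_2$ may be assumed generic; it then picks \emph{independent} generic $H_i$ of dimension $\dim c_i+k+1$ first, so that \Cref{le: subsp dim formula} genuinely applies, and only afterwards chooses generic $c_i\subseteq H_i$ and a $k$-plane $P\subseteq H_1\wedge H_2$ avoiding them, producing an explicit configuration with $H_i=c_i\vee P$ attaining the bound. The non-disjoint case is then reduced to the disjoint one by intersecting with $(c_1\wedge c_2)^*$ rather than with $P^*$. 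If you want to keep your structure, the cleanest repair of your step (2)--(3) is to avoid $P^*$ altogether: by \Cref{le: int + join}, $\dim H_1\wedge H_2=\dim H_1+\dim H_2-\dim(c_1\vee c_2\vee P)$, and for generic $P$ one has $\dim(c_1\vee c_2\vee P)=\min\{N,\dim c_1\vee c_2+k+1\}$ (a generic $k$-plane extends the join by $k+1$ dimensions or until it fills $\PP^N$); substituting $\dim c_1\vee c_2=\dim c_1+\dim c_2-\dim c_1\wedge c_2$ and turning $-\min$ into $\max$ yields the stated formula directly.
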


\begin{proof} Assume $c_1\wedge c_2=\emptyset$. Then, since $c_1,c_2$ are disjoint subspaces, linear transformations can send them to any any two other disjoint subspaces of the same dimension. We therefore may treat $c_1$ and $c_2$ as if they are generic subspaces. Let $H_1,H_2$ be generic subspaces of dimension $\dim c_i+k+1$ in $\PP^N$. They meet in $\max\{-1,\dim H_1+\dim H_2-N\}$ dimensions by \Cref{le: subsp dim formula}. If $\dim H_1+\dim H_2\ge N+k$, then $\dim H_1\wedge H_2\ge k$. Let $c_i$ be generic $\dim c_i$-planes inside $H_i$. These $c_i$ are disjoint, because $H_i$ are generic. Next, fix a $k$-plane $P$ inside $H_1\wedge H_2$. Generic $c_i$ inside $H_i$ don't meet $P$, since $\dim H_i=\dim c_i+k+1$. We have shown that for this choice of $c_i$, $\lck=\dim H_1+\dim H_2-N$. If $\dim H_1+\dim H_2<N+ k $, then generic $H_i$-planes meet in less than $k$ dimensions. We can choose such planes that meet in exactly $k$ dimensions, and let this $k$-plane be $P$. Generic $c_i$ in $H_i$ don't meet $P$. This shows that $\lck =k$.  

Assume $c_1\wedge c_2\neq \emptyset$. As in the proof of \Cref{le: subsp dim formula}, we consider the arrangement $\textit{\textbf c}'$ consisting of $c_i'=c_i\wedge (c_1\wedge c_2)^*$, which is a disjoint arrangement in $(c_1\wedge c_2)^*$. As in \Cref{le: isomo onto *}, the set of back-projected planes are isomorphic to the set of $(\dim H_i-\dim (c_1\wedge c_2)^*)-1$-planes inside $(c_1\wedge c_2)^*$ through $c_i'$, respectively. By the first part of the proof,
\begin{align}\begin{aligned}
    \ell_{\textit{\textbf c}',k} &=\max\{k,\dim c_1'+\dim c_2'+2k+2-\dim (c_1\wedge c_2)^*\}\\
    &=\max\{k,\dim c_1+\dim c_2+2k+1-N-\dim c_1\wedge c_2\}.
\end{aligned}
\end{align}
We therefore aim to prove that $\lck=\ell_{\textit{\textbf c}',k}+\dim c_1\wedge c_2+1$. Take a generic $k$-plane $P$ in $\PP^N$. It is disjoint from every center. Moreover, it is contained in a $(N-\dim c_1\wedge c_2-1)$-plane away from $c_1\wedge c_2$. We may take $(c_1\wedge c_2)^*$ to be this subspace. Write $H_i=c_i\vee P$. Then $\lck=H_1\wedge H_2$ by \Cref{le: semi-upper} and the rest follows from \Cref{le: int join *}.
\end{proof}

\begin{lemma}\label{le: lck lcprimk} Let $\ca$ be a center arrangement of $n\ge 2$ centers. Fix a generic $\dim c_n+k+1$-plane $H_n$ through $c_n.$ Define 
\begin{align}
    \ca':=(c_1\wedge H_n,\ldots,c_{n-1}\wedge H_n), 
\end{align}
viewed as a center arrangement inside $H_n\cong \PP^{\dim c_n+k+1}$. Then
\begin{align}
    \lck=\ell_{\ca',k}.
\end{align}    
\end{lemma}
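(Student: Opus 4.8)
The plan is to reduce the $n$-camera computation of $\lck$ to an $(n-1)$-camera computation inside a generic back-projected plane of the last camera, and then invoke the birational description of $\lck$ from \Cref{le: semi-upper}. First I would take a generic $k$-plane $P$ in $\PP^N$; by \Cref{le: semi-upper}, $\lck$ equals the dimension of $\bigwedge_{i=1}^n (c_i \vee P)$. Since $H_n$ was fixed generic through $c_n$ but $P$ is generic, I need to coordinate these two choices: the key observation is that $c_n \vee P$ is itself a generic $\dim c_n + k + 1$-plane through $c_n$, so I may assume $H_n = c_n \vee P$ (both are generic objects of the same type, and a linear automorphism of $\PP^N$ fixing $c_n$ carries one to the other while carrying $P$ to another generic $k$-plane through $c_n$). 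With this identification, $P \subseteq H_n$, and the intersection $\bigwedge_{i=1}^n (c_i \vee P)$ equals $\bigwedge_{i=1}^{n-1}\big(H_n \wedge (c_i \vee P)\big)$, since the $i=n$ factor is $c_n \vee P = H_n$ itself, which just contributes the ambient space of the intersection.

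Next I would apply \Cref{le: uvw}: writing $c_i \vee P$ with $P \subseteq \PP^N$ and intersecting with $H_n$, I want to show $H_n \wedge (c_i \vee P) = (c_i \wedge H_n) \vee P$. This requires $P \subseteq H_n$ (which we arranged) and is exactly the shape of \Cref{le: uvw} with $V = P$, $W = H_n$, $U = c_i$ — note $V \subseteq W$ holds. Therefore
\begin{align}
  \bigwedge_{i=1}^n (c_i \vee P) = \bigwedge_{i=1}^{n-1}\big((c_i \wedge H_n) \vee P\big).
\end{align}
The right-hand side is precisely the expression appearing in \Cref{le: semi-upper} applied to the center arrangement $\ca' = (c_1 \wedge H_n, \ldots, c_{n-1}\wedge H_n)$ inside the ambient space $H_n \cong \PP^{\dim c_n + k + 1}$, evaluated at the $k$-plane $P$. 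So I would then argue that, $H_n$ being generic, $P$ is a generic $k$-plane inside $H_n$; hence by \Cref{le: semi-upper} (applied within $H_n$) the dimension of that intersection is $\ell_{\ca',k}$, giving $\lck = \ell_{\ca',k}$.

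The main obstacle I anticipate is the genericity bookkeeping in the first step: justifying rigorously that one may simultaneously take $P$ generic in $\PP^N$ \emph{and} $H_n = c_n \vee P$, given that the statement fixes $H_n$ first. The clean way is to note that the pair $(H_n, P)$ with $H_n$ a generic $\dim c_n + k + 1$-plane through $c_n$ and $P$ a generic $k$-plane inside $H_n$ not meeting any center has the same "generic type" as the pair $(c_n \vee P', P')$ for $P'$ generic in $\PP^N$ — both are dominant images of the relevant incidence variety, and $\PP\mathrm{GL}_{N+1}$ acting while fixing $c_n$ is transitive enough on generic $H_n$ through $c_n$ to transport one configuration to the other. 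I should also check the degenerate possibility that $c_i \wedge H_n = \emptyset$ for some $i < n$, in which case $(c_i \wedge H_n)\vee P = P$ and that factor is harmless; and that $P$ genuinely does not meet any $c_i \wedge H_n \subseteq c_i$, which follows since generic $P$ meets no $c_i$. Once this is pinned down, the rest is a direct application of \Cref{le: uvw} and \Cref{le: semi-upper}.
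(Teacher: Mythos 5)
Your proposal is correct and follows the same overall strategy as the paper: realize the generic $H_n$ as $c_n\vee P$ for a generic $k$-plane $P$, rewrite $\bigwedge_{i=1}^n(c_i\vee P)$ as $\bigwedge_{i=1}^{n-1}\big((c_i\vee P)\wedge H_n\big)$, identify each factor with $(c_i\wedge H_n)\vee P$, and apply \Cref{le: semi-upper} once in $\PP^N$ and once inside $H_n$. The one genuine divergence is how the key identity $(c_i\vee P)\wedge H_n=(c_i\wedge H_n)\vee P$ is obtained: you get it in one line from the modular law, i.e.\ \Cref{le: uvw} with $V=P\subseteq W=H_n$ and $U=c_i$, whereas the paper only records the easy inclusion $(c_i\wedge H_n)\vee P\subseteq (c_i\vee P)\wedge H_n$ and then forces equality by a dimension count, computing $\dim (c_i\vee P)\wedge H_n=\ell_{(c_i,c_n),k}$ via \Cref{le: lck n = 2} and matching it against $\dim c_i\wedge H_n+k+1$ via \Cref{le: subsp dim formula}. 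Your route is shorter, needs no genericity for that particular step, and makes the two-center computation unnecessary there. One caveat on your genericity bookkeeping: the parenthetical appeal to a linear automorphism of $\PP^N$ fixing $c_n$ does not work as stated, because such an automorphism moves the remaining centers $c_1,\ldots,c_{n-1}$ and hence changes the quantity $\ell_{\ca',k}$ you are trying to compute. This is harmless, since the incidence-variety argument you give in your final paragraph (the graph of $P\mapsto c_n\vee P$ is birational to $\Gr(k,\PP^N)$ and dominates the space of $\dim c_n+k+1$-planes through $c_n$, so ``generic $H_n$, then generic $P\subseteq H_n$'' and ``generic $P$, then $H_n:=c_n\vee P$'' produce the same generic configuration) is the correct justification and is exactly what the paper's terse ``Since $H_n$ is generic, $c_i\vee P$ meet in exactly an $\lck$-plane'' is implicitly relying on.
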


\begin{proof} By \Cref{le: semi-upper}, for a generic $k$-plane $P$ in $\PP^N$, the back-projected planes $H_i=c_i\vee P$ meet in exactly an $\lck$-plane. This intersection lies inside each $H_i$. Fix a generic $\dim c_n+k+1$-plane $H_n$ through $c_n.$ Take a generic $k$-plane $P$ inside $H_n$. Since $H_n$ is generic, $c_i\vee P$ meet in exactly an $\lck$-plane. Observe that
\begin{align}\label{eq: lck compare}
    \bigwedge_{i=1}^{n-1} \big((c_i\wedge H_n)\vee P\big)= \bigwedge_{i=1}^{n-1} \big((c_i\vee P)\wedge H_n\big),
\end{align}
because $P\subseteq  H_n$ and by \Cref{le: uvw}, we have $(c_i\wedge H_n)\vee P= (c_i\vee P)\wedge H_n$ for each $i$. We are done, since the dimension of the left-hand side of \eqref{eq: lck compare} is $\ell_{\textit{\textbf c}',k}$ and the dimension of the right-hand side is $\lck$. 
\end{proof}

Denote by $\lambda\vdash n$ a partition of $[n]$.

\begin{theorem}\label{thm: lck final} We have
    \begin{align}\label{eq: lck part}
        \lck=N+\max_{\lambda \vdash n}\Big\{\sum_{I\in \lambda} \big(\dim c_I+k+1-N\big)\Big\}.
    \end{align}
\end{theorem}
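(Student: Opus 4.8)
The strategy is an induction on the number of cameras $n$, using \Cref{le: lck lcprimk} to reduce from $n$ centers in $\PP^N$ to $n-1$ centers inside a generic back-projected plane $H_n \cong \PP^{\dim c_n + k + 1}$, combined with the explicit two-camera base case \Cref{le: lck n = 2}. The key bookkeeping device is to track how the right-hand side of \eqref{eq: lck part} transforms under this reduction. First I would verify the case $n = 1$: here $\lck = k$ (a generic $k$-plane $P$ not meeting $c_1$ satisfies $H_{[1]} = c_1 \vee P$, but more precisely we need $\dim(c_1 \vee P) \wedge \text{(itself)}$, which is just $\dim c_1 \vee P$... actually for $n=1$ the intersection over the empty-beyond-one family is $H_1$ itself, but the triangulation interpretation forces $\lck = k$ — I would instead note the only partition is $\lambda = \{[1]\}$, giving $N - (N-k-1) + \dim c_1$; this equals $k$ precisely when $\dim c_1 = -1$, so for $n=1$ one must interpret $H_{[1]}$ as the $k$-plane inside $H_1$, hence the formula should be read with the convention that makes $n=1$ yield $k$, matching $\dim c_1 \vee P$ restricted appropriately). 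Cleaner: I would take $n = 2$ as the genuine base case, where \Cref{le: lck n = 2} gives exactly $\max\{\dim c_1 \wedge c_2 + k + 1, \dim c_1 + \dim c_2 + 2k + 2 - N\}$, and check this matches \eqref{eq: lck part} by enumerating the two partitions of $[2]$: $\lambda = \{\{1,2\}\}$ yields $N - (N-k-1) + \dim c_{\{1,2\}} = \dim c_1 \wedge c_2 + k + 1$, and $\lambda = \{\{1\},\{2\}\}$ yields $N - 2(N-k-1) + \dim c_1 + \dim c_2 = \dim c_1 + \dim c_2 + 2k + 2 - N$.

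For the inductive step, assume the formula holds for all center arrangements of fewer than $n$ cameras in any $\PP^M$. Apply \Cref{le: lck lcprimk}: $\lck = \ell_{\ca', k}$ where $\ca' = (c_1 \wedge H_n, \ldots, c_{n-1} \wedge H_n)$ lives inside $H_n \cong \PP^{M}$ with $M := \dim c_n + k + 1$. By the inductive hypothesis applied to $\ca'$ in $\PP^M$,
\begin{align}
    \ell_{\ca', k} = M + \max_{\mu \vdash [n-1]} \Big\{ -|\mu|(M - k - 1) + \sum_{I \in \mu} \dim (c_I \wedge H_n) \Big\}.
\end{align}
Note $M - k - 1 = \dim c_n$. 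The crux is to compute $\dim(c_I \wedge H_n)$ for $I \subseteq [n-1]$, where $H_n$ is a generic $\dim c_n + k + 1$-plane through $c_n$. By \Cref{le: subsp dim formula} (with the roles $c_1 \leftarrow c_I$ fixed, $c_2 \leftarrow c_n$ fixed, and $H_1 \leftarrow c_I$... wait, $c_I$ is not generic) — more carefully, $H_n$ generic through $c_n$ means $H_n \wedge (\text{anything})$ behaves generically, so $\dim(c_I \wedge H_n) = \max\{\dim(c_I \wedge c_n), \dim c_I + \dim c_n + k + 1 - N\}$ exactly as in \eqref{eq: ci Hn}. Substituting, each term $\dim(c_I \wedge H_n)$ is a max of two linear expressions; I would argue that optimizing over $\mu \vdash [n-1]$ and then over the choice of which branch of each max to take is equivalent to optimizing over all partitions $\lambda \vdash [n]$, via the correspondence: a block $I \in \mu$ taking the branch $\dim(c_I \wedge c_n)$ corresponds to the block $I \cup \{n\} \in \lambda$ (contributing $\dim c_{I \cup \{n\}}$), while a block $I \in \mu$ taking the branch $\dim c_I + \dim c_n + k + 1 - N$ corresponds to two blocks $I$ and (accumulating the $+\dim c_n + k+1 - N$ contributions)... this needs care because $n$ can only land in one block of $\lambda$.

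The precise combinatorial identity to establish is: given the expression $M - |\mu| \dim c_n + \sum_{I \in \mu} \max\{\dim c_I \wedge c_n,\ \dim c_I + \dim c_n + k+1-N\}$ with $M = \dim c_n + k + 1$, maximizing over $\mu \vdash [n-1]$ and over a choice $S \subseteq \mu$ of blocks using the first branch, one gets $\sum_{I \in S}\dim c_{I \cup\{n\}} + \sum_{I \in \mu \setminus S}(\dim c_I + \dim c_n + k + 1 - N) + \dim c_n + k + 1 - |\mu|\dim c_n$. Since at most one block should use the first branch at optimum (merging two $n$-containing blocks only helps), or by a convexity/telescoping argument one shows the optimum over $S$ has $|S| \le 1$; setting $S = \{I_0\}$ and letting $\lambda = (\mu \setminus \{I_0\}) \cup \{I_0 \cup \{n\}\}$ (a partition of $[n]$ with $|\lambda| = |\mu|$), or $S = \emptyset$ and $\lambda = \mu \cup \{\{n\}\}$ (with $|\lambda| = |\mu| + 1$), direct substitution should collapse the expression to $N - |\lambda|(N-k-1) + \sum_{I \in \lambda}\dim c_I$. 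Conversely every $\lambda \vdash [n]$ arises this way by deleting $n$ from its block. The main obstacle will be this last combinatorial reconciliation — proving that the nested maximization (over $\mu$, then over branches) is exactly the single maximization over $\lambda \vdash [n]$, and in particular that one never benefits from having two blocks of $\mu$ both use the "first branch" (equivalently, that the formula is correct even though the reduction seems to privilege the index $n$). I would handle this by checking both inequalities: every $\mu$-plus-branch-choice produces a valid $\lambda$ with equal or smaller objective value after the algebra, and every $\lambda \vdash [n]$ is realized, so the two maxima coincide. One should also double-check the edge convention $\dim \emptyset = -1$ interacts correctly when some $c_I = \emptyset$ (then that block simply is absent from the relevant max-branch, consistent with $\dim c_I = -1$ being dominated).
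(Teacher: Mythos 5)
Your proposal follows essentially the same route as the paper's proof: induction on $n$ via \Cref{le: lck lcprimk}, the two-camera formula of \Cref{le: lck n = 2}, the generic-intersection computation $\dim(c_I\wedge H_n)=\max\{\dim c_{I\cup\{n\}},\,\dim c_I+\dim c_n+k+1-N\}$ from \Cref{le: subsp dim formula}, and the key combinatorial point that at the optimum at most one block takes the ``$c_{I\cup\{n\}}$'' branch --- the paper proves exactly your merging claim via the inequality $-\dim c_n+\dim c_{I_1\cup\{n\}}+\dim c_{I_2\cup\{n\}}\le\dim c_{I_1\cup I_2\cup\{n\}}$, a consequence of \Cref{le: int + join}, and then matches the two maxima by checking both inequalities, as you propose. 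The one genuine error is your treatment of $n=1$: since $\lck=\min_H\dim H_{[n]}$ and for a single center $H_{[1]}=H_1=c_1\vee P$, one has $\lck=\dim c_1+k+1$, which is precisely what the unique partition contributes --- no reinterpretation is needed, and the claim that ``the triangulation interpretation forces $\lck=k$'' is false (triangulation from one camera is simply impossible unless $c_1=\emptyset$). You route around this by taking $n=2$ as the base case, which makes the induction sound for $n\ge 2$, but as written the $n=1$ case of the theorem would be left unproven; it is a one-line verification once the definition of $\lck$ is read correctly.
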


We now have a complete characterization for when $\Mck$ is triangulable, namely when the right-hand side of \eqref{eq: lck part} equals $k$. An alternative form of the right-hand side of \eqref{eq: lck part} is
 \begin{align}\label{eq: lck part alt}
        N+\max_{\lambda \vdash n}\Big\{-|\lambda|(N-k-1)+\sum_{I\in \lambda} \dim c_I\Big\}.
    \end{align}

\begin{proof} One can check that the statement holds for center arrangements consisting of only one center $c$. Namely, in this case the only partition consists of the element $I=\{1\}$. Then \eqref{eq: lck part} says that $\lck=N-(N-k-1)+\dim c =\dim c+k+1$, which holds. We proceed by assuming that $\ca$ consists of at least two centers, and argue via induction.

We fix a generic back-projected plane $H_n$. We define $\ca'$ via $c_i':=c_i\wedge H_n$ for $i=1,\ldots,n-1$ and use \Cref{le: lck lcprimk}. Assuming via induction that the statement holds for $n-1$ centers, we have
\begin{align}\label{eq: lcprimk}
    \lck=\ell_{\ca',k}=\dim c_n+k+1+\max_{\lambda \vdash n-1}\Big\{\sum_{I\in \lambda}\big(\dim c_I\wedge H_n-\dim c_n\big)\Big\}.
\end{align}
Next, we write $\widehat{\ell}$ for the right-hand side of \eqref{eq: lck part}. We aim to show $\lck=\widehat{\ell}$. For this, it follows from \Cref{le: subsp dim formula} that for any $I\subseteq [n]$, 
\begin{align}\label{eq: use this}
     \dim c_I+k+1-N\le \dim c_I\wedge H_n-\dim c_n.
\end{align}
  
  For a partition $\lambda$ of $[n]$, there is a exactly one $I\in \lambda$ with $n\in I$, and we denote this $I$ by $I_n$. Write $\widehat{\ell}_1$ for \eqref{eq: lck part}, where the maximum is over partitions with $I_n=\{n\}$, and write $\widehat{\ell}_2$ for \eqref{eq: lck part}, where the maximum is over all other partitions. By construction, $\widehat{\ell}=\max\{\widehat{\ell}_1,\widehat{\ell}_2\}$. By \eqref{eq: use this},
\begin{align}\begin{aligned}
    \widehat{\ell}_1&= N+ \max_{\lambda \vdash n-1}\Big\{\big(\dim c_n+k+1-N\big)+\sum_{I\in \lambda} \big(\dim c_I+k+1-N\big)\Big\}\\
    &\le \dim c_n+k+1+\max_{\lambda \vdash n-1}\Big\{\sum_{I\in \lambda} \big(\dim c_I\wedge H_n-\dim c_n\big)\Big\}.
\end{aligned}
\end{align}
The right-hand side of this equations equals $\lck$. In general, we have $\dim c_{I_n}\le \dim c_{I_n\setminus \{n\}}\wedge H_n$. By this and \eqref{eq: use this},
\begin{align}\begin{aligned}
    \widehat{\ell}_2&=N+\max_{\lambda \vdash n-1}\Big\{\big(\dim c_{I_n}+k+1-N\big)+\sum_{I\in \lambda:n\not\in I} \big(\dim c_I+k+1-N\big)\Big\}\\
    &\le \dim c_n+k+1+\max_{\lambda \vdash n}\Big\{\big(\dim c_{I_n\setminus\{n\}}\wedge H_n-\dim c_n\big)+\\
    & \quad\quad\quad\quad\quad\quad\quad\quad\quad\quad\quad\quad\quad\quad\sum_{I\in \lambda: n\not \in I} \big(\dim c_I\wedge H_n-\dim c_n\big)\Big\}.
\end{aligned}
\end{align}
Since $I\in \lambda\setminus \{I_n\}$ together with $I_n\setminus \{n\}$ forms a partition of $[n-1]$, we conclude that $\widehat{\ell}_2$ is bounded from above by $\lck$. We have shown that $\widehat{\ell}\le \lck$, and we proceed by showing the other inequality.

By \Cref{le: subsp dim formula},  
\begin{align}
    \sum_{I\in \lambda} \dim c_I\wedge H_n=\sum_{I\in \lambda} \max\{\dim c_{I\cup\{n\}},\dim c_I+\dim c_n+k+1-N\}.
\end{align}
If the maximum of \eqref{eq: lcprimk} is attained by a partition $\lambda=(I_1,I_2,\ldots,I_s),s\ge 2,$ such that 
\begin{align}
    \dim c_{I_i}\wedge H_n= \dim c_{I_i\cup\{n\}} \textnormal{ for each }i=1,2,
\end{align}
then the parition $\lambda'=(I_1\cup I_2,I_3,\ldots, I_s)$ also attains the maximum of \eqref{eq: lcprimk}. This is a consequence of 
\begin{align}
    -\dim c_n+\dim c_{I_1\cup\{n\}}+\dim c_{I_2\cup\{n\}}\le \dim c_{I_1\cup I_2\cup \{n\}},
\end{align}
which follows from \Cref{le: int + join}. Let $B_{[n-1]}^1$ denote the set of partitions $\lambda$ of $[n-1]$ such that $\dim c_I\wedge H_n=\dim c_{I\cup \{n\}}$ for exactly one $I\in \lambda$, which we denote $I_n$, and let $B_{[n-1]}^0$ denote the set of partitions $\lambda$ for which no $I$ has this property. Then, by the above, in the maximum of \eqref{eq: lcprimk}, we may restrict to partitions $\lambda$ in either $B_{[n-1]}^0$ or $B_{[n-1]}^1$. 

First we optimize over $\lambda\in B_{[n-1]}^0$. \eqref{eq: lcprimk} becomes
    \begin{align}
        \dim c_n+k+1+\max_{\lambda\in B_{[n-1]}^0}\Big\{\sum_{I\in \lambda}\big(\dim c_I+k+1-N\big)\Big\},
    \end{align}
    which equals 
        \begin{align}\label{eq: 123123}
       N+\max_{\lambda\in B_{[n-1]}^0}\Big\{\big(\dim c_n+k+1-N\big)+\sum_{I\in \lambda}\big(\dim c_I+k+1-N\big)\Big\}.
    \end{align}
Letting $\lambda'$ denote the partition consisting of $I\in \lambda$ and $\{n\}$, it is clear that \eqref{eq: 123123} is bounded from above by \eqref{eq: lck part}. Second we optimize over $\lambda\in B_{[n-1]}^1$. \eqref{eq: lcprimk} becomes
    \begin{align}\begin{aligned}
        \dim c_n+k+1+\max_{\lambda\in B_{[n-1]}^1}\Big\{\big(\dim c_{I_n\cup \{n\}}&-\dim c_n\big)+\\
        &\sum_{I\in \lambda\setminus \{I_n\}}\big(\dim c_I+k+1-N\big)\Big\}.
    \end{aligned}
    \end{align}
    This equals 
        \begin{align} \label{eq: 456456}
       N+\max_{\lambda\in B_{[n-1]}^1}\Big\{\big(\dim c_{I_n\cup\{n\}}+k+1-N\big)+\sum_{I\in \lambda}\big(\dim c_I+k+1-N\big)\Big\}.
    \end{align}
    We get a partition of $[n]$ by taking $I\in \lambda\setminus \{I_n\}$ and $I_n\cup\{n\}$. Again, \eqref{eq: 456456} is bounded from above by \eqref{eq: lck part}, proving that $\lck\le\hat{\ell}$ and $\lck=\hat{\ell}$. 
\end{proof}


Combining \Cref{thm: lck final} with \Cref{prop: dim} directly yields:

\begin{corollary}\label{cor: dim final} We have
\begin{align}
    \dim \Mck=(k+1)\min_{\lambda\vdash n}\Big\{\sum_{I\in \lambda}\big(N-\dim c_I-k-1\big)\Big\}.
\end{align}
\end{corollary}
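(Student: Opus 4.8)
The plan is a direct substitution, so the argument is short and the only care needed is with signs. First I would recall the two ingredients that are already available. Proposition \ref{prop: dim} states that $\mathcal{M}_{\mathbf{c},k}$ is irreducible of dimension $(k+1)(N-\ell_{\mathbf{c},k})$, and Theorem \ref{thm: lck final} expresses
\[
\ell_{\mathbf{c},k}=N+\max_{\lambda\vdash n}\Big\{-|\lambda|(N-k-1)+\sum_{I\in\lambda}\dim c_I\Big\}.
\]
So the entire content of the corollary is already packaged in these two statements, and all that remains is to combine them.

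Next I would compute $N-\ell_{\mathbf{c},k}$. Subtracting the displayed formula from $N$ cancels the leading $N$ and negates the maximum, giving
\[
N-\ell_{\mathbf{c},k}=-\max_{\lambda\vdash n}\Big\{-|\lambda|(N-k-1)+\sum_{I\in\lambda}\dim c_I\Big\}.
\]
Applying the elementary identity $-\max_\lambda\big(-f(\lambda)\big)=\min_\lambda f(\lambda)$ with $f(\lambda)=|\lambda|(N-k-1)-\sum_{I\in\lambda}\dim c_I$ rewrites this as $N-\ell_{\mathbf{c},k}=\min_{\lambda\vdash n}\{|\lambda|(N-k-1)-\sum_{I\in\lambda}\dim c_I\}$.

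Finally I would substitute this into Proposition \ref{prop: dim} and pull the positive constant $k+1$ inside the minimum, yielding $\dim\mathcal{M}_{\mathbf{c},k}=(k+1)\min_{\lambda\vdash n}\{|\lambda|(N-k-1)-\sum_{I\in\lambda}\dim c_I\}$, which is exactly the asserted formula. I do not expect any genuine obstacle: the substantive work lies entirely in Theorem \ref{thm: lck final} (and in Proposition \ref{prop: dim}), and the only thing to double-check is the bookkeeping when turning the $\max$ into a $\min$; the convention $\dim\emptyset=-1$ needs no separate treatment since it is already baked into the cited results.
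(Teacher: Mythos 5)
Your proposal is correct and matches the paper exactly: the corollary is stated there as an immediate consequence of combining \Cref{thm: lck final} with \Cref{prop: dim}, with no further proof given. Your sign bookkeeping in converting the $\max$ to a $\min$ is right, so nothing more is needed.
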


In \Cref{s: Pseudo-Dis}, we provide a simpler formula for $\lck$ and $\dim \Mck$ under certain conditions on the camera centers. It is in order that we discuss the relation between \Cref{cor: dim final} and the dimension result of Li \cite[Theorem 1.1]{li2018images} in the case of $k=0$ and $c_{[n]}=\emptyset$. For this, we define
\begin{align}\begin{aligned}
    M_{\ca,k}(h):=\{(m_1,\ldots,m_n)\in \NN^n:& \sum_{i=1}^n m_i=h, \textnormal{ and for each } I\subseteq [n],\\
    &\sum_{i\in I}m_i\le (k+1)(N-\dim c_I-k-1)\},
\end{aligned}
\end{align}
where $\NN=\ZZ_{\ge 0}$ denotes the non-negative integers. The following is a generalization of Li's result on dimensions to any $k$.
\begin{theorem}\label{thm: dim other} We have \begin{align}
    \dim \Mck=\max_{h: M_{\ca,k}(h)\neq \emptyset}h.
\end{align}
\end{theorem}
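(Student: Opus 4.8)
The plan is to show both inequalities by relating the quantity $\max\{h : M_{\ca,k}(h)\neq\emptyset\}$ to the formula for $\dim\Mck$ coming from \Cref{cor: dim final}, namely $(k+1)\min_{\lambda\vdash n}\{|\lambda|(N-k-1)-\sum_{I\in\lambda}\dim c_I\}$. Write $D:=\min_{\lambda\vdash n}\{|\lambda|(N-k-1)-\sum_{I\in\lambda}\dim c_I\}$, so that $\dim\Mck=(k+1)D$ by \Cref{cor: dim final}, and let $h^\ast:=\max\{h: M_{\ca,k}(h)\neq\emptyset\}$. The claim is then $h^\ast=(k+1)D$. First I would observe that the set of constraints defining $M_{\ca,k}(h)$ is a bounded polytope intersected with $\NN^n$ together with the equality $\sum m_i=h$, so $h^\ast$ is exactly the maximum of $\sum_{i=1}^n m_i$ over the polytope $Q:=\{m\in\RR_{\ge0}^n : \sum_{i\in I}m_i\le (k+1)(N-\dim c_I-k-1)\ \text{for all}\ I\subseteq[n]\}$, provided the maximum is attained at an integer point; I will need to check that integrality is not lost, which should follow because the constraint matrix of $Q$ is the incidence matrix of the subsets of $[n]$ and the right-hand sides are all divisible by $k+1$, so one can rescale by $k+1$ and argue that the LP maximum of $\sum m_i/(k+1)$ over $Q/(k+1)$ is attained at a lattice point — this is the polymatroid/submodularity structure, since $I\mapsto (k+1)(N-\dim c_I-k-1)$ is submodular in $I$ by \Cref{le: int + join} (which gives $\dim c_{I\cup J}+\dim c_{I\cap J}\le \dim c_I+\dim c_J$, i.e. $\dim c_I$ is supermodular, hence $-\dim c_I$ submodular).

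\textbf{The LP duality step.} The heart of the argument is linear programming duality: the maximum of $\sum_i m_i$ over $Q$ equals the minimum of $\sum_{I\subseteq[n]} y_I\cdot (k+1)(N-\dim c_I-k-1)$ over $y\ge 0$ satisfying $\sum_{I\ni i} y_I\ge 1$ for every $i\in[n]$. Because the set function $f(I):=(k+1)(N-\dim c_I-k-1)$ is submodular and (one checks) $f(\emptyset)$ is large enough while $f$ is nonnegative on the relevant sets (this uses the standing assumption $\dim c_i\le N-k-1$), the polytope $Q$ is a polymatroid, and the classical fact (Edmonds) that for a polymatroid the maximum of $\sum m_i$ equals $f([n])$ when $f$ is monotone does not quite apply directly because $f$ need not be monotone. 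Instead I would use the partition form of the dual optimum: for polymatroids, the dual LP above has an optimal solution supported on a chain, and in fact one can further reduce to $y$ being the indicator of a partition $\lambda\vdash n$ (each $y_I\in\{0,1\}$ with the $I$'s in $\lambda$ forming a partition), giving dual value $\sum_{I\in\lambda} f(I)=(k+1)\sum_{I\in\lambda}(N-\dim c_I-k-1)=(k+1)(|\lambda|(N-k-1)-\sum_{I\in\lambda}\dim c_I)$. Minimizing over partitions yields exactly $(k+1)D$, matching \Cref{cor: dim final}.

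\textbf{Main obstacle and how to handle it.} The step I expect to be most delicate is justifying that the dual optimum can be taken to be a partition rather than an arbitrary fractional cover $y$ — this is where submodularity of $f$ is really used, and it is the analogue for covers of the statement that a submodular function's Lovász extension is minimized at $0/1$ points along a chain. Concretely, I would argue: take an optimal dual $y$; by an uncrossing argument, whenever $y_I,y_J>0$ with $I,J$ incomparable and $I\cap J\ne\emptyset$, replace a common $\varepsilon$ of mass by $\varepsilon$ on $I\cup J$ and $I\cap J$, which is still feasible and does not increase the objective by submodularity of $f$; iterate to reach a laminar (chain-like) $y$, then collapse nested sets to disjoint pieces and round to a partition, checking feasibility of $\sum_{I\ni i}y_I\ge1$ is preserved (each $i$ lies in exactly one block). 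The primal/integrality side — that $h^\ast$ equals the LP value and not something smaller — follows from the fact that polymatroid polytopes are integral, so the primal optimum is attained at an integer vertex, hence lies in $M_{\ca,k}(h^\ast)$. Combining the primal integrality with the dual-equals-partition computation gives $h^\ast=(k+1)D=\dim\Mck$, completing the proof. Throughout, the only input beyond LP duality is the supermodularity of $I\mapsto\dim c_I$, which is \Cref{le: int + join} applied inductively, and the standing dimension bounds on the centers that keep $f$ nonnegative on the sets appearing in an optimal cover.
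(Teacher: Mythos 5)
Your proposal is essentially correct but follows a genuinely different route from the paper. Both proofs share the easy direction: for any $m\in M_{\ca,k}(h)$ and any partition $\lambda\vdash n$ one has $h=\sum_{I\in\lambda}\sum_{i\in I}m_i\le(k+1)\sum_{I\in\lambda}(N-\dim c_I-k-1)$, which combined with \Cref{cor: dim final} gives $h\le\dim\Mck$ (this is your weak-duality step). The difference is in producing an integer point of value $\dim\Mck$. The paper does this geometrically: it inducts on $n$ via \Cref{le: lck lcprimk}, restricting the arrangement to a generic back-projected plane $H_n$, taking an optimal $m'\in M_{\ca',k}(h')$ for the restricted arrangement, appending $m_n=(k+1)(N-\dim c_n-k-1)$, and verifying feasibility with \Cref{le: subsp dim formula}. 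You instead isolate a purely combinatorial statement — that the integer maximum of $\sum m_i$ over $\{m\ge0:\sum_{i\in I}m_i\le f(I)\}$ with $f(I)=(k+1)(N-\dim c_I-k-1)$ equals $\min_{\lambda\vdash n}\sum_{I\in\lambda}f(I)$ — and prove it by LP duality plus uncrossing, using that $I\mapsto\dim c_I$ is supermodular (which does follow from \Cref{le: int + join} together with $c_{I\cup J}=c_I\wedge c_J$ and $c_{I\cap J}\supseteq c_I\vee c_J$). Your approach buys a clean separation of the geometry (all concentrated in \Cref{cor: dim final}) from the combinatorics, and makes the equivalence of \Cref{thm: lck final} and \Cref{thm: dim other} transparent; the paper's approach stays self-contained and reuses machinery already built for \Cref{thm: lck final}.

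Two caveats on your execution. First, your diagnosis of why Edmonds' formula $\max\sum m_i=f([n])$ fails is off: $f$ here \emph{is} monotone non-decreasing (since $I\subseteq J$ implies $c_J\subseteq c_I$); what fails is full submodularity at disjoint sets, since for $I\cap J=\emptyset$ one would need $\dim(c_I\vee c_J)\le N-k-1$, which is false in general. So $f$ is only \emph{intersecting} submodular, and the correct value is the Dilworth truncation $f^\wedge([n])=\min_\lambda\sum_{I\in\lambda}f(I)$, not $f([n])$. Second, your laminar-to-partition step ("collapse nested sets to disjoint pieces and round") is the one place where the sketch does not obviously work as stated: if $I\subsetneq J$ both carry dual mass, merging onto $J$ increases the cost (as $f(I)\le f(J)$) while merging onto $I$ destroys coverage of $J\setminus I$. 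This step is precisely the content of the Dilworth-truncation theorem for intersecting submodular functions (that $f^\wedge$ is fully submodular, integer-valued, monotone, and satisfies $P(f)\cap\RR^n_{\ge0}=P(f^\wedge)\cap\RR^n_{\ge0}$), so it is true and citable, but it needs either a reference or a more careful argument than the one you give.
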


\begin{proof} Let $h$ be the largest integer with $M_{\ca,k}(h)\neq \emptyset$. For $m\in M_{\ca,k}(h)$, and any partition $\lambda$ of $[n]$, we have
\begin{align}
    h=\sum_{I\in \lambda }\sum_{i\in I} m_i\le (k+1)\sum_{I\in \lambda} (N-\dim c_I-k-1),
\end{align}
which proves $\dim \Mck\ge h$, by \Cref{cor: dim final}.


For the other inequality, $\dim \Mck\le h$, we argue via induction. It is easy to check for $n=1$. Then assume it holds for $n-1$, where $n\ge 2$. We fix a generic $H_n$, write $c_i':=c_i\wedge H_n$ and use \Cref{le: lck lcprimk}. We have $\ell_{\ca',k}=\lck$, and
\begin{align}
    h'=(k+1)(\dim c_n+k+1-\lck)
\end{align}
is the largest $h'$ such that $M_{\ca',k}(h')\neq \emptyset$, since $\dim \mathcal M_{\ca',k}=(k+1)(N-\ell_{\ca',k})$. Let $m'=(m_1,\ldots,m_{n-1})\in M_{\ca',k}(h')$, and consider $m_n=(k+1)(N-\dim c_n-k-1)$. It suffices to show that $(m_1,\ldots,m_n)\in M_{\ca,k}(\sum_{i=1}^nm_n)$, since the sum of these $m_i$ equals $\dim \Mck$ by construction, and is bounded from above by $h$ by assumption. Take $I\subseteq [n-1]$, and observe that by induction and \Cref{le: subsp dim formula}, 
\begin{align}\begin{aligned}
    \sum_{i\in I} m_i&\le (k+1)(\dim c_n-\dim c_I\wedge H_n)\\
    &\le (k+1)(\dim c_n-\dim c_I-\dim H_n+N)\\
    &= (k+1)(N-\dim c_I-k-1).
\end{aligned}
\end{align}
Next, take $I\subseteq [n]$ with $n\in I$. Similarly, by the fact that $\dim c_I \le \dim c_{I\setminus \{n\}}\wedge H_n$, we have 
\begin{align}\begin{aligned}
    \sum_{i\in I} m_i=m_n+  \sum_{i\in I\setminus \{n\}} m_i&\le m_n+(k+1)(\dim c_n-\dim c_{I\setminus \{n\}}\wedge H_n)\\
    &\le (k+1)(N-\dim c_I-k-1).
\end{aligned}
\end{align}
This shows that $(m_1,\ldots,m_n)\in M_{\ca,k}(\dim \Mck)$ and we are done.
\end{proof}


Denote by~$L_{d}\subseteq \mathbb P^{h}$ a general subspace of codimension $d$. The \textit{multidegree} of a variety $\mathcal{Y}$ in $\PP^{h_1}\times \cdots \times\PP^{h_m}$ is the function 
\begin{align}
    D(d_{1},\dots,d_{m}):= \#( \mathcal{Y}\cap (L_{d_{1}}^{(1)}\times \cdots\times L_{d_{m}}^{(m)})), 
\end{align}
for $(d_{1},\dots,d_{m})\in \mathbb N^{n}$ such that $d_{1}+\cdots +d_{m} = \operatorname{dim} \mathcal{Y}$. In the context of Computer Vision, multidegrees of a photographic multiview variety $\mathcal M_{\Ca,k}$ tells us how many possible reconstructions of world features are possible given partial information in each image. This is explained in \cite[Section 4]{breiding2023line} in regards to line multiview varieties. In \cite[Theorem 1.1]{li2018images}, it is stated that the multidegrees of the photographic multiview variety $\mathcal M_{\Ca,0}$ are 1 for the tuples $(m_1,\ldots,m_n)$ that lie in $M_{\ca,0}(h)$ for the largest integer $h$ with $M_{\ca,0}(h)\neq \emptyset$, and 0 otherwise. We leave it for future work to provide a similar result on the multidegree for $\mathcal M_{\Ca,k}$ given $k\ge 1$. However, we note that in general for $k\ge 1$, the non-zero multidegrees are not 1 and the sets $M_{\ca,k}(\dim \Mck)$ do not uniquely determine the multidegrees, in contrast to the point case $k=0$:

\begin{example} Let $\Ca$ be a center arrangement of four cameras $\PP^3\dashrightarrow \PP^2$. It follows from the proof of \cite[Theorem 4.1]{breiding2023line}, that as long as the four centers are not collinear, the symmetric $(1,1,1,1)$-multidegree of the line multiview variety $\mathcal L_{\mathcal C}$, as defined in the introduction, is 2. However, we now argue that if the centers are distinct and collinear, then this multidegree is instead 1. Observe that in both cases, the variety is of dimension $4$, and the sets $M(4)$ are equal. Let $E$ be the line containing all centers. Fixing four hyperplanes in each copy of $\Gr(1,\PP^2)$ corresponds to fixing four point $x_i\in \PP^2$ that lines $(\ell_1,\ell_2,\ell_2,\ell_4)\in \mathcal L_{\mathcal C}$ must go through. Recall that any element of $\mathcal L_{\mathcal C}$ has that its back-projected planes meet in a line. We then look for lines in $\PP^3$ that meet all back-projected lines $L_i$ of $x_i$. It is a classic result that there are precisely two lines $K_1$ and $K_2$ in $\PP^3$ meeting four generic lines. In this case, one of these two lines have to be equal to $E$, say $K_1$. The back-projected planes $H_i$ of $\ell_i$ contain the back-projected lines $L_i$ and one of $K_1$ and $K_2$. However, this uniquely defines $H_i$ as the span of $L_i$ and either $K_1$ or $K_2$. Now $H_i=L_i\vee K_1$ gives a tuple that does not lie in $\mathcal L_{\mathcal C}$. Indeed, it follows from \cite{breiding2023line}, that four generic back-projected planes through $E$ do not correspond to an element of the line multiview variety. By genericity, $K_2$ does not meet any centers and $H_i=L_i\vee K_2$ correspond to an element of the line multiview variety. Therefore, in this case the symmetric multidegree is 1 and not 2.\hspace*{\fill}$\diamondsuit\,$     
\end{example}










\section{Pseudo-Disjointedness}\label{s: Pseudo-Dis} One problem that arises when studying generalized multiview varieties, as defined in \Cref{s: GMV}, is that elements of a center arrangements may intersect in complicated ways, obstructing our geometric intuition. To remedy this, we sometimes wish to restrict to subset of all possible center arrangements with more tractable properties. One important insight of this paper is that pseudo-disjointedness is natural choice for such a restriction. For instance, pseudo-disjointed center arrangements imply the expected dimension for $\Mck$, as stated in \Cref{thm: pseudo dim}. Recall the definitions of $\Pck,\Vck$ and $\Hck$ from \Cref{ss: GeoMV}.

\begin{definition} Let $\ca$ be a center arrangement. We say that $\ca$ is:
    \begin{itemize}
       \item \textit{Generic} if each center $c_i$ is generic in $\Gr(\dim c_i,\PP^N)$, 
\item \textit{Disjoint} if the centers $c_i$ are pairwise disjoint,
       \item \textit{Pseudo-disjoint with respect to $k$} if $\Pck\cap \Hck=\Pck\cap \Hck\cap \Vck$.
\end{itemize}
\end{definition}
In some real world scenarios, cameras are be placed along either a fixed axis, or in some other particular pattern. However, generic center arrangements can often be assumed. Generic arrangements are not guaranteed to be disjoint. For instance, generic plane centers in $\PP^4$ meet in a point. As we shall see in the main result of this section, generic center arrangements are pseudo-disjoint with respect to any $k\ge 0$. In addition, disjoint center arrangements are pseudo-disjoint for any $k$, because in this case $\Vck$ is simply the ambient space. 

\begin{example} Consider a linear projection $\PP^2\dashrightarrow \PP^1\times \PP^1$, given by camera matrices with point centers $c_1$ and $c_2$. If the centers coincide, then $(H_1,H_2)\in \mathcal M_{\textbf c,0}$ if and only if $H_1=H_2$, and they contain the centers $c_1=c_2$. Then $\ca=(c_1,c_2)$ is not pseudo-disjoint with respect to $k=0$, because two generic lines through $c_1=c_2$ only meet in a point, meaning $\mathcal P_{\textit{\textbf c},0}\cap \mathcal V_{\textit{\textbf c},0}\cap \mathcal H_{\textit{\textbf c},0}\subsetneq \mathcal P_{\textit{\textbf c},0}\cap \mathcal H_{\textit{\textbf c},0}$. If the centers are distinct, then $\ca$ is a disjoint arrangement, and as explained above is also pseudo-disjoint. \hspace*{\fill}$\diamondsuit\,$   
\end{example}

\begin{example} Consider a linear projection $\PP^3\dashrightarrow \PP^1\times \PP^1$, given by camera matrices with two line centers $c_1$ and $c_2$ meeting in exactly a point. Then, since two planes in $\PP^3$ always meet in a line, for any two back-projected planes $H_1,H_2$, we have $\dim H_1\wedge H_2 \ge 1=\dim c_1\wedge c_2+1$. By definition, $\ca=(c_1,c_2)$ is pseudo-disjoint with respect to $k=0$. \hspace*{\fill}$\diamondsuit\,$ 
\end{example}

The geometric interpretation of pseudo-disjointedness goes as follows. A center arrangement is pseudo-disjoint with respect to $k$ if and only if the intersection of its centers do not imply any conditions on the multiview variety $\Mck$. In other words, pseudo-disjoint arrangements \textit{act} as if they were disjoint. The next two results give alternative characterizations of this property.

\begin{lemma}\label{le: Pck Vck} The center arrangement $\ca$ is pseudo-disjoint with respect to $k$ if and only if 
\begin{align}\label{eq: Pck Vck}
    \Pck=\Pck\cap \Vck.
\end{align}
\end{lemma}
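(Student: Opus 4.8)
The plan is to treat the two implications separately. The direction ($\Leftarrow$) is immediate: intersecting $\Pck=\Pck\cap\Vck$ with $\Hck$ yields $\Pck\cap\Hck=\Pck\cap\Vck\cap\Hck$, which is pseudo-disjointedness. For ($\Rightarrow$) I would argue by contraposition, assuming $\Pck\neq\Pck\cap\Vck$ and constructing a single tuple $H\in(\Pck\cap\Hck)\setminus\Vck$, thereby witnessing $\Pck\cap\Hck\neq\Pck\cap\Vck\cap\Hck$. Since each factor $\{H_i:c_i\subseteq H_i\}$ is isomorphic to a Grassmannian, $\Pck$ is irreducible; hence $\Pck\not\subseteq\Vck$ means that the defining inequality of $\Vck$ fails for the generic element of $\Pck$ at some index set, i.e. there is $I\subseteq[n]$ with $|I|>1$ and $c_I\neq\emptyset$ whose minimal (equivalently, generic) value $d_I$ of $\dim H_I$ over $H\in\Pck$ satisfies $d_I\le\dim c_I+k$. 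I would fix such an $I$ with $|I|$ minimal.

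The core of the argument is to build one tuple $(H_i)_{i\in I}$ with $c_i\subseteq H_i$, $\dim H_i=\dim c_i+k+1$, and $\dim H_I$ equal to \emph{exactly} $\dim c_I+k$; note this value is $\ge k$ because $c_I\neq\emptyset$. Fix $i_0\in I$ and put $J:=I\setminus\{i_0\}$. Take generic $H_i$ for $i\in J$, so $G:=H_J$ has $\dim G=d_J$; by minimality of $I$ (or trivially when $|J|=1$) we have $d_J\ge\dim c_J+k+1\ge\dim c_I+k+1$. Choosing in addition a generic $H_{i_0}$ through $c_{i_0}$ realizes $\dim H_I=d_I$, and from $c_{i_0}\wedge G\subseteq H_{i_0}\wedge G=H_I$ together with \Cref{le: int + join} one reads off $\dim(c_{i_0}\wedge G)\le\dim c_I+k$ and $\dim G+\dim c_{i_0}+k+1-N\le\dim c_I+k$. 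These are exactly the inequalities needed to choose a subspace $G_0$ with $c_{i_0}\wedge G\subseteq G_0\subseteq G$ and $\dim G_0=\dim c_I+k$, and then a generic $(\dim c_{i_0}+k+1)$-plane $H_{i_0}$ through $c_{i_0}\vee G_0$ — a join whose dimension is at most $\dim c_{i_0}+k+1$ by \Cref{le: int + join}, using $c_I\subseteq c_{i_0}\wedge G\subseteq G_0$. Applying \Cref{le: subsp dim formula} to the fixed subspace $G$ and the generic $H_{i_0}$, and evaluating $G\wedge(c_{i_0}\vee G_0)=G_0$ via \Cref{le: uvw}, gives $\dim H_I=\dim(G\wedge H_{i_0})=\dim G_0=\dim c_I+k$.

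The last step is the extension to all of $[n]$: fix a $k$-plane $P\subseteq H_I$ (possible since $\dim H_I\ge k$) and for $j\notin I$ pick any $(\dim c_j+k+1)$-plane $H_j\supseteq c_j\vee P$, which exists because $\dim(c_j\vee P)\le\dim c_j+k+1\le N$ by the standing assumption $\dim c_j\le N-k-1$. Then $H=(H_1,\dots,H_n)\in\Pck$ since it contains every center, $H\in\Hck$ since every $H_i$ contains $P$ so $\dim H_{[n]}\ge k$, and $H\notin\Vck$ since $\dim H_I=\dim c_I+k<\dim c_I+k+1$; this completes the contrapositive. I expect the main difficulty to be the dimension bookkeeping in the core step — confirming that the two inequalities coming from the minimality of $I$ leave exactly enough room for $G_0$, and that the generic $H_{i_0}$ meets $G$ in $G_0$ and nothing larger — all of which I would settle with the intersection--join identities of \Cref{s: Geo Int Joins}.
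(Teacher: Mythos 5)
Your argument is correct, and the hard direction is proved by a genuinely different device than the paper's. The paper also reduces to a single bad index set $I$ via irreducibility of $\Pck$ and also uses your extension trick (back-projecting a $k$-plane $P\subseteq H_I$ through the remaining centers to pass between the subarrangement $\ca_I$ and the full arrangement); but to produce a tuple whose intersection dimension lands in the window $[k,\dim c_I+k+1)$ it runs a discrete intermediate-value argument, deforming a tuple with $\dim H_I'\ge k$ toward the generic tuple by exchanging one spanning vector at a time and observing that $\dim H_I$ changes by at most one per step. You instead construct such a tuple in closed form: choose generic $H_i$ for $i\in J=I\setminus\{i_0\}$, interpolate a subspace $G_0$ with $c_{i_0}\wedge G\subseteq G_0\subseteq G$ of dimension exactly $\dim c_I+k$, and take $H_{i_0}$ generic through $c_{i_0}\vee G_0$, computing $\dim(G\wedge H_{i_0})=\dim G_0$ from \Cref{le: subsp dim formula} and \Cref{le: uvw}. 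The two inequalities you extract from the generic tuple (namely $\dim(c_{i_0}\wedge G)\le\dim c_I+k$ and $\dim G+\dim c_{i_0}+k+1-N\le\dim c_I+k$), together with $d_J\ge \dim c_I+k+1$ from the minimality of $I$, are exactly what make $G_0$ exist and keep the generic intersection from exceeding $G_0$; I checked the bookkeeping and it closes. What your route buys is an explicit witness and the avoidance of the somewhat informal one-vector-at-a-time continuity argument; what it costs is the extra minimality hypothesis on $I$ and the $G_0$ construction, neither of which the paper needs. One small presentational caveat: \Cref{le: subsp dim formula} is stated for two subspaces each generic through a fixed center, whereas you apply it with one subspace ($G$) entirely fixed; this is the degenerate case $c_2=H_2=G$ of that lemma and is harmless, but worth a sentence.
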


\begin{proof} If $\Pck=\Pck\cap\Vck$, then $\ca$ must be pseudo-disjoint by definition, showing one direction of the statement. For the other, we observe that 
\begin{align}\label{eq: Pck Vck Hck}
     \Pck\cap \Hck=\Pck\cap \Hck\cap \Vck
\end{align}
holds for any subarrangement of $\ca$. Indeed, let $I\subseteq [n]$ and consider the subarrangement $\ca_I:=(c_i)_{i\in I}$. Let $(H_i)_{i\in I}\in \mathcal P_{\ca_I,k}\cap \mathcal H_{\ca_I,k}$ and take any $k$-plane $P\subseteq H_{I}$. Define $H_i$ for $i\in [n]\setminus I$ to be any $\dim c_i+k+1$-planes through $c_i\vee P$. Then $H=(H_i)_{i\in [n]}$ is an element of $\Pck\cap \Hck$, showing by \eqref{eq: Pck Vck Hck} that $H\in \Vck$ and therefore $(H_i)_{i\in I}\in \mathcal V_{\ca_I,k}$. 

Assume by contradiction that $\Pck\supsetneq \Pck\cap \Vck$. Then there is an $H\in \Pck$ and $I\subseteq [n]$ with $c_I\neq \emptyset$ such that $\dim H_I< \dim c_I+k+1$. In particular, this strict inequality occurs for generic $H_i$ through $c_i$, since $\Pck$ is irreducible. We find an $H'\in \mathcal P_{\ca_I,k}$ such that 
\begin{align}
    k\le \dim H_I'< \dim c_I+k+1.
\end{align}
Note that $\dim c_I+k+1\ge k+1$, implying that such a tuple of back-projected planes can indeed exist. It is clearly possible to find $H_i'$ such that $\dim H_I'\ge k$, namely take $H_i'$ to be any $\dim c_i+k+1$-planes through $c_i\vee P$ for any fixed $k$-plane $P$. If this construction yields $\dim H_I'\ge \dim c_I+k+1$, then we may construct a sequence $H_i^{'(a)}\to H_i$ of $\dim c_i+k+1$-planes by starting at $H_i'$ and exchanging one of its spanning vectors to a spanning vector of $H_i$, one at a time. This can be done in a way that preserves the dimension of the back-projected planes. The dimension of $H_I^{'(a)}$ changes at most by one along this sequence. This means that for some $a$, $H_i'':=H_i^{'(a)}$ satisfies 
\begin{align}\label{eq: HI''}
   k\le \dim  H_I''<\dim c_I+k+1.
\end{align}
Then, $(H_i'')_{i\in I}\in \mathcal P_{\ca_I,k}\cap \mathcal H_{\ca_I,k} $, and since \eqref{eq: Pck Vck Hck} holds for $\ca_I$, we have that $(H_i'')_{i\in I}\in \mathcal V_{\ca_I,k}$. This imples that $\dim H_I''\ge \dim c_I+k+1$, which is a contradiction to \eqref{eq: HI''}.
\end{proof}

\begin{proposition}\label{prop: Pseudo-Disjoint} The center arrangement $\ca$ is pseudo-disjoint with respect to $k$ if and only if for each $\emptyset \neq I\subseteq [n]$ such that $c_I\neq \emptyset $, we have
    \begin{align}\label{ineq: pseudo}
        \sum_{i\in I} \dim c_i\ge (|I|-1)(N-k-1) + \dim c_{I}.
    \end{align}
If $\textit{\textbf c}$ is generic, then this condition is satisfied for every $k\ge 0$.
\end{proposition}

Recall that for a subspace $V$, a dual $V^*$ is a subspace of dimension $N-\dim V-1$, disjoint from $V$.

\begin{proof} By \Cref{le: Pck Vck}, we may take $\Pck=\Pck\cap \Vck$ as the definition for pseudo-disjointedness.

$ \Leftarrow)$ Let $I\subseteq [n]$ be such that $c_I\neq \emptyset$. \Cref{ineq: pseudo} can be rewritten as
    \begin{align}
        \sum_{i\in I}\big(\dim c_i+k+1\big)\ge (|I|-1)N + \dim c_{I}+k+1.
    \end{align}
By the multiview identities and \Cref{le: intDim}, this means that for any back-projected planes $H_i,i\in I,$ we have that $\dim H_I\ge \dim c_I+k+1$, which shows that $\Pck\subseteq \Pck\cap \Vck$. 

$\Rightarrow)$ Assume by induction that the statement is true for all $N=0,\ldots,N_0$, we show that it holds for $N=N_0+1$. The base case $N=0$ has that $k=0$ and that all centers are empty. In particular, $c_I=\emptyset$ for any $I\subseteq[n]$. 

Choose $I\subseteq [n]$. If $c_I=\emptyset$, there is nothing to prove, and we therefore assume that $c_I\neq \emptyset$. Write $\textit{\textbf c}'$ for the arrangement of centers $c_i':=c_i\wedge c_I^*,i\in I,$ in $c_I^*$. Since $c_I^*$ is a subspace of dimension $N_1:=N-\dim c_I-1$, we may consider $\mathcal P_{\textit{\textbf c}',k}$ as a subset of the ambient space
\begin{align}
     \Gr(\dim c_1'+k+1,\PP^{N_1})\times \cdots\times \Gr(\dim c_n'+k+1,\PP^{N_1}).
\end{align}
We claim that
\begin{align}\label{eq: Pc'k}
    \mathcal P_{\textit{\textbf c}',k}=\mathcal P_{\textit{\textbf c}',k}\cap \mathcal V_{\textit{\textbf c}',k}\cap \mathcal H_{\textit{\textbf c}',k}
\end{align}
if and only if $\textit{\textbf c}_I=(c_i)_{i\in I}$ is pseudo-disjoint with respect to $k$. Before, we prove directions $\Rightarrow$ and $\Leftarrow$ of this claim, note that by \Cref{le: isomo onto *}, we have an isomorphism between elements of $ \mathcal P_{\textit{\textbf c}_{I},k}$ and $\mathcal P_{\textit{\textbf c}',k}$ sending $(H_i)_{i\in I}$ to $(H_i\wedge c_I^*)_{i\in I}$.
\begin{itemize}
\item[$\Rightarrow)$] Take $H\in \mathcal P_{\textit{\textbf c}_{I},k}$, and write $H_i'=H_i\wedge c_I^*$. By \eqref{eq: Pc'k}, we have $\dim H_J'\ge k$ for any $J\subseteq I$, and $\dim H_J'\ge \dim c_J'+k+1$ for any $J\subseteq I$ with $c_J'\neq \emptyset$. In either case, we get $\dim H_J'\ge \dim c_J'+k+1$. By \Cref{le: int join *}, $\dim H_J\ge \dim c_I+\dim c_J'+k+2$. However, by \Cref{le: U vee V*} and the fact that $c_J\subseteq c_I$, we get $\dim c_J'=\dim c_J-\dim c_I-1$. It follows that $\dim H_J\ge \dim c_J+k+1$ for any $J\subseteq I$, showing $H\in \mathcal V_{\textit{\textbf c}_{I},k}$. 

    \item[$\Leftarrow)$] Let $H'\in \mathcal P_{\textit{\textbf c}',k}$ and write $H_i=c_I\vee H_i'$. Then $H\in \mathcal P_{\textit{\textbf c}_{I},k}$, and for any $J\subseteq I$, we have $\dim H_J\ge \dim c_J+k+1$. Here we used that $c_J\neq \emptyset$, since $c_I\neq \emptyset$. By \Cref{le: int join *}, we have $\dim H_J'=\dim H_J-\dim c_I-1$ and as a consequence, $\dim H_J'\ge \dim c_J-\dim c_I+k$. Further, by \Cref{le: uvw}, we have $\dim c_J-\dim c_I=\dim c_J'+1$. This implies that $\dim H_J'\ge \dim c_J'+k+1$ and $H'\in \mathcal V_{\textit{\textbf c}',k}\cap \mathcal H_{\textit{\textbf c}',k}$.
\end{itemize} 

It follows by definition that if $\textit{\textbf c}$ is pseudo-disjoint, then so is $\textit{\textbf c}_I$. By the above, this implies that $\textit{\textbf c}'$ is pseudo-disjoint inside $c_I^*\cong \PP^{N_1}$. Since $N_1\le N_0$, we have by induction 
\begin{align}\label{eq: c_ic_I*}
    \sum_{i\in I} \dim c_i'\ge (|I|-1)(\dim c_I^*-k-1)+\dim c_I',
\end{align}
where $\dim c_I'=-1$. We have $\dim c_I^*=N-\dim c_I-1$, and by \Cref{le: uvw}, we get $\dim c_i=\dim c_i\wedge c_I^*+\dim c_I+1$. Making these substitution, and adding $|I|(\dim c_I+1)$ to both sides of \eqref{eq: c_ic_I*}, we arrive at \Cref{ineq: pseudo} for $I$.

For the last part, we use \Cref{le: subsp dim formula}. Namely, given two generic centers $c_1,c_2$, their intersection may be taken to be a generic subspace of dimension $\dim c_1+\dim c_2-N$. Then the intersection of $c_1\wedge c_2$ with a generic $c_3$ can be taken to be a generic subspace of dimension $\dim c_1+\dim c_2+\dim c_3-2N$, and so on. By reiteration, we get \Cref{ineq: pseudo} for $k=-1$. In particular, it holds for any $k\ge 0$.
\end{proof}

Taking \Cref{ineq: pseudo} as an alternative definition for pseudo-disjointedness, we have that pesudo-disjointedness with respect to $k\le -2$ is equivalent to $\textit{\textbf c}$ being disjoint. Indeed, assume $c_{\{1,2\}}\neq \emptyset$. Then $\dim c_1+\dim c_2\ge N+1+ \dim c_1\wedge c_2$ by \Cref{ineq: pseudo}. It follows by \Cref{le: int + join} that $\dim c_1\vee c_2\ge N+1$ which is a contradiction. Similarly, pseudo-disjointedness for $k=-1$ is equivalent to $c_i$ meeting in the expected dimension, i.e. 
\begin{align}
    \dim c_I=\max\Big\{-1,\Big(\sum_{i=1}^n \dim c_i\Big)-N(n-1)\Big\}.
\end{align}
Let us also consider pseudo-disjointedness for point centers. These are the most common types of centers in application, take for instance the point multiview variety $\mathcal M_{\mathcal C}$ as defined in the introduction. In this case, we can express \Cref{ineq: pseudo} as  
\begin{align}\label{ineq: pseudo points}
     0\ge (|I|-1)(N-k-1),
\end{align}
whenever $c_I\neq \emptyset$, i.e. $\dim c_I=0$. This is equivalent to either $|I|=1$ or $k=N-1$. In the latter case, $c_i\vee P=\PP^N$ for any $k$-plane not meeting $c_i$ and $\Mck=\Pck$ is just a point. Therefore, in this case, pseudo-disjointedness is equivalent to that either $\ca$ is disjoint or $k=N-1$.

\begin{theorem}\label{thm: pseudo dim} Let $\textit{\textbf c}$ be pseudo-disjoint with respect to $k$. Then 
\begin{align}\label{eq: lck}
    \ell_{\textbf c,k}=\max \Big\{k,N-\sum_{i=1}^n\big(h_i-k\big)\Big\}.
\end{align}
\end{theorem}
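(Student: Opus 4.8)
The plan is to combine the partition formula from \Cref{thm: lck final} with the pseudo-disjointedness inequality \eqref{ineq: pseudo}, and show that under pseudo-disjointedness only two partitions can achieve the maximum in \eqref{eq: lck part}: the discrete partition $\{\{1\},\ldots,\{n\}\}$ and the coarse partition $\{[n]\}$ (the latter only when $c_{[n]}\neq\emptyset$). First I would rewrite the right-hand side of \eqref{eq: lck part} using the multiview identities $h_i=N-\dim c_i-1$, so that for a partition $\lambda=\{I_1,\ldots,I_s\}$ the contribution is
\begin{align}
  N-|\lambda|(N-k-1)+\sum_{I\in\lambda}\dim c_I
  = N+\sum_{I\in\lambda}\Big(\dim c_I-(N-k-1)\Big).
\end{align}
For a singleton block $I=\{i\}$ this summand is $\dim c_i-(N-k-1)=k-h_i$, so the discrete partition gives exactly $N-\sum_i(h_i-k)$, matching the second term inside the max in \eqref{eq: lck}. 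The coarse partition $\{[n]\}$ gives $N-(N-k-1)+\dim c_{[n]}=\dim c_{[n]}+k+1\ge k$ if $c_{[n]}\neq\emptyset$, and the empty partition contribution is never available since $\dim\emptyset=-1$ is only used when the block has empty intersection — I need to be careful here that blocks $I$ with $c_I=\emptyset$ contribute $-1-(N-k-1)=k-N$, a large negative quantity, so they are never helpful and the optimal partition has all blocks with nonempty center-intersection, unless the whole thing degenerates.

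The core step is to show that \emph{refining} a partition never decreases the objective, given pseudo-disjointedness. Concretely, suppose $\lambda$ contains a block $I$ with $|I|\geq 2$ and $c_I\neq\emptyset$; I would split $I$ into singletons (or more generally into any finer partition of $I$) and compare. Replacing the single summand $\dim c_I-(N-k-1)$ by $\sum_{i\in I}\big(\dim c_i-(N-k-1)\big)$ changes the objective by
\begin{align}
  \sum_{i\in I}\dim c_i-\dim c_I-(|I|-1)(N-k-1),
\end{align}
which is $\geq 0$ precisely by the pseudo-disjointedness inequality \eqref{ineq: pseudo}. Hence any block with nonempty intersection can be broken into singletons without loss; iterating, either the discrete partition is optimal, or at some point a block $I$ with $c_I=\emptyset$ must be formed during refinement — but since $c_I\subseteq c_{I'}$ for $I'\subseteq I$, a block has nonempty center-intersection as long as it is a subset of some block of the original $\lambda$ whose center-intersection is nonempty. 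The only obstruction to refining all the way down is a block whose own intersection is nonempty but which cannot be split because... actually it always can be split; the point is that when we split, the sub-blocks' intersections are \emph{larger}, hence still nonempty, so \eqref{ineq: pseudo} applies at each step. Therefore the discrete partition beats every partition all of whose blocks have nonempty center-intersection. It remains to handle partitions with a block $I$, $c_I=\emptyset$: such a block contributes $k-N\le$ (contribution of splitting $I$ into singletons, each contributing $k-h_i\ge k-N$ is false in general)... so instead I argue directly that a partition with an empty-intersection block is dominated: merge that block with everything, or rather observe $\lck\ge k$ always (a $k$-plane $P$ lies in every $c_i\vee P$), and $\lck\ge\dim c_{[n]}+k+1$ when $c_{[n]}\neq\emptyset$ via the coarse partition, and $\lck\ge N-\sum_i(h_i-k)$ via the discrete partition; then the upper bound from \Cref{thm: lck final} together with the refinement argument shows no partition exceeds $\max\{k,\,N-\sum_i(h_i-k)\}$ — here I use that $\dim c_{[n]}+k+1 = N-\sum_{i}(h_i-k) + (\text{correction})$ simplifies, and in fact pseudo-disjointedness with $I=[n]$ gives $\sum_i\dim c_i\ge(n-1)(N-k-1)+\dim c_{[n]}$, i.e. $\dim c_{[n]}+k+1\le N-\sum_i(h_i-k)$, so the coarse partition never beats the discrete one.

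The main obstacle I anticipate is the careful bookkeeping around blocks with empty center-intersection and the $\dim\emptyset=-1$ convention: I must verify that the maximum in \eqref{eq: lck part} is either attained by the discrete partition (giving the second term) or, when all centers are so large that $N-\sum_i(h_i-k)<k$, forced down to the floor value $k$ (which is always a lower bound for $\lck$, as any $k$-plane $P$ sits inside all its back-projected planes). The clean way to organize this: prove $\lck\ge\max\{k,N-\sum(h_i-k)\}$ directly from the two explicit partitions, then prove $\lck\le\max\{k,N-\sum(h_i-k)\}$ by showing every partition's objective $N+\sum_{I\in\lambda}(\dim c_I-(N-k-1))$ is at most this maximum — splitting nonempty-intersection blocks into singletons (using \eqref{ineq: pseudo}) and bounding the remaining empty-intersection blocks' contributions crudely, using that such blocks force the total below $k$ or can be absorbed. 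I would also double-check the base/degenerate cases $n=1$ and $k=N-1$ where, as the text notes, pseudo-disjointedness collapses to disjointedness.
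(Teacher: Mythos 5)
Your proposal is correct and follows essentially the same route as the paper: the lower bound comes from the two explicit configurations ($\lck\ge k$ always, and the expected-dimension count $N-\sum_i(h_i-k)$), and the upper bound comes from \Cref{thm: lck final} by applying the pseudo-disjointedness inequality \eqref{ineq: pseudo} blockwise to reduce nonempty-intersection blocks to singletons, with any empty-intersection block contributing $k-N$ and thus capping the objective at $k$. The only cosmetic difference is that the paper first restricts to partitions with at most one empty-intersection block before splitting into the two cases, whereas you bound all empty blocks crudely at once; both work.
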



\begin{proof} By \Cref{le: subsp dim formula}, we have that $H_i$ meet in at least $\big(\sum_{i=1}^n \dim H_i\big)-(n-1)N$ dimensions, and a direct calculation using the multiview identities shows that this expression equals $N-\sum_{i=1}^n(h_i-k)$. We also know that $\lck\ge k$ always holds. This means that the inequality $\ge$ in \eqref{eq: lck} holds. We are done if we can show that $\le$ also holds. For this we use \Cref{thm: lck final}. The maximum of \eqref{eq: lck part} is attained over partitions $\lambda\vdash n$ such that at most one $I\in \lambda$ has $c_I=\emptyset$. Indeed, if $I_1,I_2\in \lambda$ both have $\dim c_{I_1}=\dim c_{I_2}=\emptyset$, then the partition we get by taking the union of $I_1\cup I_2$ gives a higher value in the right-hand side of \eqref{eq: lck part}. Then, let $B_{[n]}^0$ denote the partitions such that $c_I\neq \emptyset$ for each $I\in \lambda$ and let $B_{[n]}^1$ denote the partitions such that $c_I=\emptyset$ for exactly one $I\in \lambda$. Consider
\begin{align}
    N+\max_{\lambda\in B_{[n]}^0}\Big\{\sum_{I\in \lambda} (\dim c_I+k+1-N)\Big\}.
\end{align}
By \Cref{prop: Pseudo-Disjoint}, it can be estimated from above by 
\begin{align}
    N+\max_{\lambda\in B_{[n]}^0}\Big\{\sum_{I\in \lambda} \sum_{i\in I}(\dim c_i+k+1-N)\Big\}.
\end{align}
This expression may be written as $N-\sum_{i=1}^n(h_i-k)$. Next, for $\lambda\in B_{[n]}^1$, the right-hand side of \eqref{eq: lck part} becomes 
\begin{align}
    N+\max_{\lambda\in B_{[n]}^1}\Big\{k-N+\sum_{I\in \lambda: c_I\neq \emptyset} (\dim c_I+k+1-N)\Big\}.
\end{align}
By \Cref{prop: Pseudo-Disjoint}, it can be estimated from above by 
\begin{align}
    k+\max_{\lambda\in B_{[n]}^1}\Big\{\sum_{I\in \lambda:c_I\neq \emptyset} \sum_{i\in I}(\dim c_i+k+1-N)\Big\}.
\end{align}
Since $\dim c_i+k+1\le N$, this maximum is bounded from above by $k$. Because it was enough to maximize the right-hand side of \eqref{eq: lck part} over $B_{[n]}^0\cup B_{[n]}^1$, this shows that the $\le$ inequality of \eqref{eq: lck}.
\end{proof}

Combining \Cref{prop: dim} and \Cref{thm: pseudo dim} now yields: 

\begin{corollary}\label{cor: dim} Let $\textit{\textbf c}$ be pseudo-disjoint with respect to $k$. Then 
\begin{align}
    \dim \Mck=(k+1)\min\Big\{(N-k),\sum_{i=1}^n\big(h_i-k\big)\Big\}.
\end{align}
\end{corollary}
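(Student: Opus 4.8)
The plan is to combine the two results already in hand: Proposition~\ref{prop: dim}, which expresses $\dim \Mck$ in terms of $\lck$ for an arbitrary center arrangement, and Theorem~\ref{thm: pseudo dim}, which evaluates $\lck$ under the pseudo-disjointedness hypothesis. No new geometry is needed; the corollary is essentially a reformulation.

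First I would invoke Proposition~\ref{prop: dim} to write $\dim \Mck = (k+1)(N-\lck)$. Since $\textit{\textbf c}$ is pseudo-disjoint with respect to $k$ by hypothesis, Theorem~\ref{thm: pseudo dim} applies and gives $\lck = \max\{k,\, N-\sum_{i=1}^n(h_i-k)\}$. Substituting this value, it remains only to simplify $(k+1)\bigl(N-\max\{k,\,N-\sum_{i=1}^n(h_i-k)\}\bigr)$.

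The last step is the elementary identity $N-\max\{a,b\}=\min\{N-a,\,N-b\}$, applied with $a=k$ and $b=N-\sum_{i=1}^n(h_i-k)$, so that $N-a=N-k$ and $N-b=\sum_{i=1}^n(h_i-k)$. This yields $N-\lck=\min\{N-k,\,\sum_{i=1}^n(h_i-k)\}$, and multiplying by $k+1$ produces the claimed formula. There is no genuine obstacle here: all the content resides in Theorem~\ref{thm: pseudo dim} and Proposition~\ref{prop: dim}, and the proof is a one-line substitution followed by this trivial rewriting of a $\max$ as a $\min$.
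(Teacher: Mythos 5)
Your proposal is correct and matches the paper exactly: the corollary is obtained by substituting the value of $\lck$ from \Cref{thm: pseudo dim} into the dimension formula of \Cref{prop: dim} and rewriting $N-\max\{a,b\}$ as $\min\{N-a,N-b\}$. Nothing further is needed.
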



\section{Properness}\label{s: Prop} This paper has so far studied the question when triangulation is possible for a multiview variety. We take this opportunity to discuss for what center arrangements calibration is possible. In particular, we are interested in when $\Mck=\Pck$. This is equivalent to
\begin{align}
    \mathcal M_{\Ca,k}=\Gr(k,\PP^{h_1})\times \cdots \times \Gr(k,\PP^{h_n}).
\end{align}
If this holds, then any data of an image correspondence $\widetilde{p}$ implies no conditions of $\Ca$, because set-theoretically, there are no constraints on $\mathcal M_{\Ca,k}$. By the following, this property is essentially determined only by geometry of the centers.

\begin{proposition}\label{prop: Spec} We have $\Mck=\Pck$ if and only if $\ca$ is pseudo-disjoint with respect to $k$ and
\begin{align}\label{eq: Spec}
    N-\sum_{i=1}^n(h_i-k)\ge k.
\end{align}
\end{proposition}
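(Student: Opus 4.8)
The plan is to characterize $\Mck = \Pck$ via the dimension formula. Note that $\Pck$ is irreducible of dimension $\sum_{i=1}^n (k+1)(h_i - k)$, since $\Pck \cong \Gr(k,\PP^{h_1})\times\cdots\times\Gr(k,\PP^{h_n})$ after applying the linear isomorphism of \Cref{le: lin isomo} and \Cref{le: isomo onto *} coordinatewise (each factor $\Gr(\dim c_i + k + 1, \PP^N)$ of planes through $c_i$ is isomorphic to $\Gr(k, c_i^*) \cong \Gr(k,\PP^{h_i})$). Since $\Mck \subseteq \Pck$ and both are irreducible, $\Mck = \Pck$ if and only if $\dim \Mck = \dim \Pck$. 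By \Cref{prop: dim}, $\dim \Mck = (k+1)(N - \lck)$, so equality holds if and only if $(k+1)(N - \lck) = \sum_{i=1}^n (k+1)(h_i - k)$, i.e. $\lck = N - \sum_{i=1}^n (h_i - k)$.

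Next I would connect this to the stated condition. On one hand, $\lck \ge k$ always, so $\lck = N - \sum(h_i - k)$ forces $N - \sum(h_i-k) \ge k$, which is \Cref{eq: Spec}. On the other hand, I claim $\lck = N - \sum(h_i-k)$ together with \Cref{eq: Spec} is exactly "pseudo-disjoint with respect to $k$ and \Cref{eq: Spec}." The key input is \Cref{thm: pseudo dim}: if $\ca$ is pseudo-disjoint with respect to $k$, then $\lck = \max\{k, N - \sum(h_i-k)\}$, which under \Cref{eq: Spec} equals $N - \sum(h_i - k)$ exactly. This gives the "if" direction immediately. For the "only if" direction, I need to show that $\lck = N - \sum(h_i - k)$ (which holds when $\Mck = \Pck$) forces pseudo-disjointedness.

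For that converse, I would use the partition formula of \Cref{thm: lck final}: $\lck = N + \max_{\lambda \vdash n}\{-|\lambda|(N-k-1) + \sum_{I \in \lambda} \dim c_I\}$. Taking $\lambda$ to be the partition into singletons gives $\lck \ge N - n(N-k-1) + \sum_i \dim c_i = N - \sum_i(h_i - k)$ (using $\dim c_i = N - h_i - 1$), so that partition achieves the value $N - \sum(h_i - k)$. The hypothesis $\lck = N - \sum(h_i-k)$ thus says the singleton partition is \emph{optimal}. Now for any $I \subseteq [n]$ with $|I| > 1$ and $c_I \neq \emptyset$, consider the partition $\lambda_I$ that groups $I$ into one block and leaves the rest as singletons; optimality of the singleton partition over $\lambda_I$ yields precisely $-|I|(N-k-1) + \dim c_I + \sum_{i \notin I}\dim c_i \le -\sum_i(N-k-1) + \sum_i \dim c_i$, which rearranges to $\sum_{i \in I}\dim c_i \ge (|I| - 1)(N-k-1) + \dim c_I$. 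That is exactly \Cref{ineq: pseudo}, so by \Cref{thm: Pseudo-Disjoint} the arrangement is pseudo-disjoint with respect to $k$. Combined with \Cref{eq: Spec} (derived above from $\lck \ge k$), this completes the converse.

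\textbf{Main obstacle.} The one point that deserves care is the identification $\dim \Pck = \sum_{i=1}^n (k+1)(h_i - k)$ — i.e. that the space of $(\dim c_i + k+1)$-planes through a fixed center $c_i$ in $\PP^N$ is isomorphic (or at least equidimensional) to $\Gr(k,\PP^{h_i})$. This is essentially \Cref{le: isomo onto *} applied with $c_0 = c_i$ (giving a bijection with $k$-planes inside $c_i^* \cong \PP^{h_i}$), so it is not truly hard, but it is the step where one must be precise about dimensions; everything else is a formal manipulation of the two formulas for $\lck$ already proved. An alternative route avoiding the partition formula entirely: prove the converse directly by showing that if $\ca$ is \emph{not} pseudo-disjoint, some $\Vck$-constraint cuts $\Pck$ down, hence $\Mck \subseteq \Vck \cap \Pck \subsetneq \Pck$ — but invoking \Cref{thm: lck final} as above is cleaner.
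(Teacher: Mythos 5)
Your proof is correct, and its skeleton (irreducibility plus dimension count, $\dim\Pck=\sum_i(k+1)(h_i-k)$ via \Cref{le: isomo onto *}, then \Cref{prop: dim} to convert the dimension equality into $\lck=N-\sum_i(h_i-k)$, and \Cref{thm: pseudo dim} for the ``if'' direction) matches the paper's proof exactly. The one place you diverge is in extracting pseudo-disjointedness from $\Mck=\Pck$: you observe that the singleton partition already attains the value $N-\sum_i(h_i-k)$ in \Cref{thm: lck final}, so equality $\lck=N-\sum_i(h_i-k)$ forces the singleton partition to be optimal, and comparing it against the partition that merges a block $I$ with $c_I\neq\emptyset$ recovers \Cref{ineq: pseudo} verbatim, whence pseudo-disjointedness via \Cref{thm: Pseudo-Disjoint}. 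The paper instead disposes of this in one line: $\Mck\subseteq\Pck\cap\Vck\cap\Hck\subseteq\Pck=\Mck$ forces $\Pck=\Pck\cap\Vck\cap\Hck$, which gives pseudo-disjointedness directly from the definition, with no appeal to either \Cref{thm: lck final} or \Cref{thm: Pseudo-Disjoint}. Your route is heavier but has a small side benefit: it shows that the numerical condition \Cref{ineq: pseudo} is itself forced by the single scalar equation $\lck=N-\sum_i(h_i-k)$, i.e.\ that optimality of the singleton partition in \Cref{eq: lck part} is equivalent to pseudo-disjointedness --- a reformulation the paper's set-theoretic argument does not make visible.
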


\begin{proof} We always have $\Mck\subseteq \Pck$ and since they are both irreducible, $\Mck=\Pck$ is equivalent to $\dim \Mck=\dim \Pck$. If this equality holds, then 
\begin{align}
    \dim \Mck=\sum_{i=1}^n(k+1)\big(h_i-k\big),
\end{align}
since the dimension of the set of $\dim c_i+k+1$-planes through $c_i$ is $(k+1)(N-\dim c_i-k-1)$. It follows by \Cref{prop: dim} that $\lck = N-\sum_{i=1}(h_i-k)$ and since $\lck\ge k$, we conclude that $N-\sum_{i=1}(h_i-k)\ge k$. Further, 
\begin{align}
 \Mck\subseteq \Pck\cap \Hck\cap \Vck\subseteq \Pck=\Mck,  
\end{align}
implying that $\ca$ is pseudo-disjoint with respect to $k$. 

For the other direction, \Cref{thm: pseudo dim} says that 
\begin{align}
    \dim \Mck= \sum_{i=1}^n(k+1)\big(h_i-k\big),
\end{align}
which are stated above is the dimension of $\Pck$, which shows $\Mck=\Pck$.
\end{proof}

By \Cref{prop: Pseudo-Disjoint}, $\Mck=\Pck$ can be rephrased in terms of the intersections of centers and the additional inequality \eqref{eq: Spec}: 

\begin{definition} Let $\ca$ be a center arrangement. We say that $\ca$ is:
    \begin{itemize}
       \item \textit{Proper with respect to $k$} if $\Mck$ is a proper subvariety of $\Pck$, meaning it does not satisfy either \Cref{ineq: pseudo} or \eqref{eq: Spec}.
\end{itemize}
\end{definition}

In application, cameras are manually placed and therefore one usually has a good idea of the corresponding center arrangements. Then for the structure-from-motion pipeline, one should avoid non-proper center arrangements, since they can not be used for calibration, or in other words the specification of camera parameters. We elucidate specifiability in the example below.  

\begin{example} Consider a center arrangement $\ca=(c_1,c_2)$ of two distinct point centers in $\PP^3$. Then $\mathcal M_{\ca,0}\subsetneq \mathcal P_{\ca,0}$ and $\ca$ is proper with respect to $k=0$, since two generic lines in $\PP^3$ do not meet. However, $\mathcal M_{\ca,1}= \mathcal P_{\ca,1}$ and $\ca$ is non-proper with respect to $k=1$, because any two planes in $\PP^3$ meet in a line. \hspace*{\fill}$\diamondsuit\,$ 
\end{example}

It follows from \Cref{prop: Spec,thm: pseudo dim} that in the pseudo-disjoint case, properness implies triangulability. This not true in general, as seen in the next example.


\begin{example} Consider a linear projection $\PP^3\dashrightarrow (\PP^1)^n$, given by at least two cameras with distinct line centers $c_i$ all meeting in a mutual point. One can check that $\ell_{\ca,0}=1$ and therefore $\ca$ is not triangulable with respect to $k=0$. Further, since $\dim \mathcal M_{\textbf c,0}=2$, for big enough $n$, the multiview variety cannot equal $\mathcal P_{\ca,0}$, which is of dimension $n$. If $n=3$, then $\ca$ is proper with respect to $k=0$.
\hspace*{\fill}$\diamondsuit\,$ 
\end{example}

In \Cref{s: GMV}, we saw that $\Mck$ was a subset of the intersection $\Pck\cap \Hck\cap \Vck$, but that this inclusion is not always an equality. However, under the conditions of pseudo-disjointedness or triangulability, we can show that it is an irreducible component of $\Pck\cap \Hck\cap \Vck$.

\begin{proposition}\label{prop: irred comp} If $\textit{\textbf c}$ is pseudo-disjoint with respect to $k$ or if $\Mck$ is triangulable, then $\Mck$ is an irreducible component of $\Pck\cap \Hck\cap \Vck$.
\end{proposition}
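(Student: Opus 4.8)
The plan is to show that $\Mck$ and $\Pck\cap\Vck\cap\Hck$ have the same dimension; since $\Mck$ is irreducible (Proposition~\ref{prop: dim}) and contained in the right-hand side, equality of dimensions forces $\Mck$ to be an irreducible component. By Proposition~\ref{prop: dim} we have $\dim\Mck=(k+1)(N-\lck)$, so it suffices to exhibit, inside $\Pck\cap\Vck\cap\Hck$, a constructible subset of dimension at most $(k+1)(N-\lck)$ through which every point of the variety factors — or, more directly, to bound $\dim\bigl(\Pck\cap\Vck\cap\Hck\bigr)$ from above by $(k+1)(N-\lck)$.

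First I would treat the pseudo-disjoint case. Here $\Pck\cap\Hck=\Pck\cap\Vck\cap\Hck$ by definition, so it is enough to bound $\dim(\Pck\cap\Hck)$. A tuple $H=(H_1,\dots,H_n)\in\Pck\cap\Hck$ is determined by choosing a $k$-plane $P\subseteq H_{[n]}$ (dimension $(k+1)(N-k)$ worth of choices at most, but constrained) together with, for each $i$, a $\dim c_i+k+1$-plane $H_i$ through $c_i\vee P$; the fibre of the forgetful map $H\mapsto$ (a generic $k$-plane $P$ in $H_{[n]}$) has the dimension of a product of Grassmannians of $k$-planes through the fixed subspaces $c_i\vee P$ inside $H_i$, which is $\sum_i (k+1)(\dim H_i-\dim(c_i\vee P))=\sum_i(k+1)(h_i-k)$ when $\dim H_i-\dim c_i-k-1 = h_i-k$ — wait, this needs care: the parameter count should reproduce $(k+1)\bigl(\sum_i(h_i-k)\bigr)$ for the part "off" the common $k$-plane, plus $(k+1)(N-k)$ for $P$ minus the overcount. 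The cleanest route is: map $\Pck\cap\Hck\to\Gr(k,\PP^N)$, $H\mapsto$ (generic $k$-plane in $H_{[n]}$, when $\lck=k$) or more robustly parametrize by $P$ and the residual data, compare with the fibre-dimension computation already used in the proof of Proposition~\ref{prop: dim}, and invoke Corollary~\ref{cor: dim} to identify the total as $(k+1)\min\{N-k,\sum_i(h_i-k)\}=\dim\Mck$. I would lean on \Cref{le: subsp dim formula} and \Cref{le: intDim} to verify the generic intersection dimensions match $\lck$ so no extra components of larger dimension appear.

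Next the triangulable case, $\lck=k$. Then for $H\in\Hck$ we have $\dim H_{[n]}\ge k$, and the rational map $\psi:\Pck\cap\Vck\cap\Hck\dashrightarrow\Gr(k,\PP^N)$ sending $H$ to $H_{[n]}$ (defined where $\dim H_{[n]}=k$) is, on a dense open set, a section-like inverse to $\Phick$: indeed $\Phick$ composed with "take $H_{[n]}$" is the identity on its domain, since $P\subseteq c_i\vee P$ for all $i$ and generically $\bigwedge_i(c_i\vee P)=P$ by \Cref{le: semi-upper}. So the component of $\Pck\cap\Vck\cap\Hck$ containing $\Mck$ maps dominantly to $\Gr(k,\PP^N)$ with generic fibre over $P$ equal to the tuples $H$ with $H_i\supseteq c_i\vee P$ and $H_{[n]}=P$; the closure of this locus is exactly $\Mck$ because $H_i$ is then forced to equal $c_i\vee P$ on the dense locus where $\dim H_i=\dim c_i+k+1$ and $H_{[n]}$ is as small as possible — here I must argue that requiring $\dim H_{[n]}=k=\lck$ together with $c_i\vee P\subseteq H_i$ and $\dim H_i=\dim c_i+k+1$ pins down $H_i=c_i\vee P$ generically, which follows from dimension count since $\dim(c_i\vee P)=\dim c_i+k+1=\dim H_i$.

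The main obstacle is the pseudo-disjoint case's dimension bound: one must show $\Pck\cap\Hck$ has no component of dimension exceeding $(k+1)\min\{N-k,\sum_i(h_i-k)\}$, i.e. that the "extra" conditions cut down dimension as expected. I would handle this by stratifying $\Pck\cap\Hck$ according to $d:=\dim H_{[n]}$: on the stratum with $\dim H_{[n]}=d$, choose the $d$-plane (dimension $(k+1)(N-d)$... rather $(d+1)(N-d)$) carrying a $k$-plane, then the $H_i\supseteq c_i$ meeting in it, and check via \Cref{le: subsp dim formula} and \Cref{le: int + join} that pseudo-disjointness (\Cref{ineq: pseudo}) forces the total not to exceed $\dim\Mck$, with equality only on the stratum $d=\lck$. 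This is the only place genuine combinatorial bookkeeping over subsets $I\subseteq[n]$ is needed, mirroring the partition estimates in the proof of \Cref{thm: pseudo dim}.
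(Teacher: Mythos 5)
There is a genuine gap, and it is in the overall strategy rather than in a single step. Your plan is to prove $\dim\bigl(\Pck\cap\Vck\cap\Hck\bigr)\le\dim\Mck$ and deduce componenthood from equality of dimensions. But that inequality is false under the stated hypotheses: being an irreducible component does not require being a top-dimensional one, and here it genuinely is not. Take $N=3$, $k=1$ and $n=5$ distinct point centers $c_1,\dots,c_5$ lying on a common line $E\subseteq\PP^3$. This arrangement is disjoint, hence pseudo-disjoint, and $\ell_{\textit{\textbf c},1}=\max\{1,3-5\}=1$, so $\mathcal M_{\textit{\textbf c},1}$ is also triangulable, with $\dim\mathcal M_{\textit{\textbf c},1}=(k+1)(N-\lck)=4$. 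However, the set $Z=\{(H_1,\dots,H_5): E\subseteq H_i \text{ for all } i\}\cong(\PP^1)^5$ is contained in $\mathcal P_{\textit{\textbf c},1}\cap\mathcal V_{\textit{\textbf c},1}\cap\mathcal H_{\textit{\textbf c},1}$ (each $H_i$ contains $c_i$ and $\dim H_{[5]}\ge\dim E=1$, while $\mathcal V_{\textit{\textbf c},1}$ is vacuous since the centers are pairwise disjoint), and $\dim Z=5>4$. So both your pseudo-disjoint dimension bound for $\Pck\cap\Hck$ and the "equality only on the stratum $d=\lck$" claim in your stratification fail — in this example the offending stratum is $d=\lck=1$ itself, over the special line $E$. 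No amount of bookkeeping with \Cref{ineq: pseudo} can rescue the bound, because the inequality you are trying to prove is simply not true.

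The repair is to abandon the dimension comparison and argue locally: it suffices to show that $\Mck$ contains a nonempty Zariski-open subset of $\Pck\cap\Vck\cap\Hck$. Concretely, when $\lck=k$ one shows the inclusion $\bigl(\Pck\cap\Vck\cap\Hck\bigr)\setminus\Mck\subseteq\{H: c_i\wedge H_{[n]}\neq\emptyset\text{ for some }i\}$: if no center meets $H_{[n]}$, pick any $k$-plane $P\subseteq H_{[n]}$ (it exists by $\Hck$ and meets no center), and then $c_i\vee P\subseteq H_i$ with $\dim(c_i\vee P)=\dim c_i+k+1=\dim H_i$ forces $H_i=c_i\vee P$, i.e.\ $H=\Phick(P)\in\Mck$. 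The right-hand locus is closed and does not contain $\Mck$ because a generic element of $\Mck$ has $H_{[n]}$ equal to a $k$-plane meeting no center (this is where $\lck=k$ enters). Your triangulable-case paragraph already contains the key forcing step $H_i=c_i\vee P$, but you phrase it as a statement about generic fibres of $H\mapsto H_{[n]}$ on "the component containing $\Mck$", which presupposes that this component has a dense subset lying over generic $P$ — exactly what the complement inclusion above is needed to establish. Finally, the remaining case (pseudo-disjoint with $\lck>k$) is not covered by your argument at all; it is handled by \Cref{prop: Spec}, which gives $\Mck=\Pck$ there, so that $\Mck$ equals the whole of $\Pck\cap\Vck\cap\Hck$.
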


\begin{proof} If $\ca$ is pseudo-disjoint with respect to $k$ and $\lck>k$, then by \Cref{prop: Spec}, $\Mck=\Pck$, implying that $\Mck$ is equal to $\Pck\cap \Hck\cap \Vck$.

In the case that $\lck=k$, we first prove the inclusion 
\begin{align}\label{eq: right-hand-side}
    \Big(\Pck\cap \Hck\cap \Vck\Big)\setminus \mathcal M_{\ca,k}\subseteq \Big\{(H_1,\ldots,H_n):  c_i\wedge H_{[n]} \neq \emptyset   \textnormal{ for some }i\Big\}.
\end{align}
We do this by taking an element $(H_1,\ldots,H_n)$ in the complement of the right-hand side of \eqref{eq: right-hand-side}, i.e. such that for every $i$, $c_i\wedge H_{[n]}=\emptyset$. It suffices to show that if $(H_1,\ldots,H_n)\in\Pck\cap \Hck\cap \Vck$, then $(H_1,\ldots,H_n)\in \mathrm{Im}\; \Phi_{\ca,k}$. The fact that $c_i\wedge H_{[n]}=\emptyset$, implies that $k$-planes in $H_{[n]}$ do not meet $c_i$. If $P$ is any $k$-plane in $H_{[n]}$, then $(H_1,\ldots,H_n)=\Phi_{\ca,k}(P)$.

Finally, observe that the right-hand side of \eqref{eq: right-hand-side} is a variety and it does not contain $\mathcal M_{\ca,k}$. To see this, take $(H_1,\ldots,H_n)\in  \mathrm{Im}\; \Phi_{\ca,k}$ such that $\dim H_{[n]}=k$. If we write $P=H_{[n]}$, then we must have $(H_1,\ldots,H_n)=\Phi_{\ca,k}(P)$. Now if $c_i\wedge H_{[n]}\neq \emptyset$ for some $i$, then $P$ would meet a center, which is a contradiction.  
\end{proof}

This proof can be extended to show that $\Mck$ is always an irreducible component of $\Pck\cap \mathcal H_{\ca,\ell}\cap \Vck$, where $\ell=\lck$. We leave it is as an open problem to determine whether $\Mck$ is in fact always an irreducible component of $\Pck\cap \Hck\cap \Vck$. A more general problem is to determine precisely when we have $\Mck=\Pck\cap \Vck$. If $\ca$ is pseudo-disjoint, then this property is equivalent to $\Mck=\Pck$, by \Cref{le: Pck Vck}. For instance, if non-triangulability implies $\Mck=\Pck\cap \Vck$, then it would directly follow that $\Mck$ is always an irreducible component of $\Pck\cap \Hck\cap \Vck$.


\section{Super-Triangulability}\label{s: Super-Tri} There is a natural, stronger condition than triangulability. This is the property that every tuple $H\in \Mck$, and not just generic ones, have a unique $k$-plane $P$ in the intersection of $H_1,\ldots,H_n$. In this direction, for a center arrangement $\ca$ and a non-negative integer $k$, define
\begin{align}
       \upsilon_{\textit{\textbf c},k}:=\max_{(H_1,\ldots,H_n)\in \mathcal M_{\textit{\textbf c},k}} \dim H_{[n]}.
\end{align}

\begin{definition} We say that $\Mck$ is:
\begin{itemize}
   \item \textit{Super-Triangulable} if every tuple $H\in \mathcal M_{\textit{\textbf c},k}$ can be triangulated, meaning $\upsilon_{\ca,k}=k$. 
\end{itemize}
\end{definition}
Our motivation to consider this stronger notion of triangulability is that under this condition, the multiview variety is naturally isomorphic to the associated \textit{multiview blowup}, i.e. the closure $\Gck$ of the graph
\begin{align}
    \Gamma_{\ca,k}:=\{(P,\Phick(P)): P\textnormal{ meets no centers}\}.
\end{align}
Relating multiview varieties to their blowups can be helpful in the study of smoothness and of Chern classes, see for instance \cite[Supplementary Material B3]{trager2015joint}\cite{EDDegree_point}.

\begin{proposition}\label{prop: uck isom} We have $\uck=k$ if and only if $\mathcal M_{\ca,k}$ is isomorphic to the blowup $\Gck$ via the projection $\pi: \Gck\to \Mck$ away from the first factor. 
\end{proposition}

\begin{proof} $\pi$ is an isomorphism if and only if it is injective. Indeed, that $\pi$ is injective is equivalent to that for each $(P,H)\in \Gck$, we have $P=H_{[n]}$. In this case, the inverse morphism of $\pi$ sends $H\in \Mck$ to $(H_{[n]},H)\in \Gck$. Further, $\pi$ is injective if and only if $\uck=k$.
\end{proof}

We have the following relations for $\lck$ and $\uck$:
\begin{align}\label{eq: ineqs u l} 
    \dim c_i+k+1\ge \upsilon_{\textit{\textbf c},k}\ge \ell_{\textit{\textbf c},k}\ge \dim c_{[n]}+k
+1 \textnormal{ for each }i.\end{align}
The first inequality follows from the fact that given $H\in \mathcal M_{\textit{\textbf c},k}$, $H_{[n]}$ is contained in each $H_i$, which is of dimension $\dim c_i+k+1$. The second inequality follows by definition. The third inequality is a consequence of \Cref{enum: Hck,enum: Vck} from \Cref{s: GMV}. 

The number $\uck$ has the following natural description.

\begin{proposition}\label{prop: uck first}
    $\upsilon_{\textbf c,k}$ is the largest dimension of a subspace $V$ that meets each center in at least $\dim V-k-1$ dimensions.
\end{proposition}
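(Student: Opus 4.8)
The plan is to establish two inequalities. First, I would show that $\uck$ is at most the largest dimension $d$ of a subspace $V$ meeting each center in at least $\dim V - k - 1$ dimensions. Given $H = (H_1, \ldots, H_n) \in \Mck$ with $\dim H_{[n]} = \uck$, set $V := H_{[n]}$. Each $H_i$ contains $V$ and also contains the center $c_i$ (since $H \in \Pck$ by item \ref{enum: Pck} of \Cref{s: GMV}), so $c_i \vee V \subseteq H_i$, hence $\dim (c_i \vee V) \le \dim H_i = \dim c_i + k + 1$. By \Cref{le: int + join}, $\dim c_i \wedge V = \dim c_i + \dim V - \dim(c_i \vee V) \ge \dim c_i + \dim V - \dim c_i - k - 1 = \dim V - k - 1$. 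Thus $V = H_{[n]}$ is a subspace of dimension $\uck$ meeting each center in at least $\dim V - k - 1$ dimensions, giving $\uck \le d$.

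For the reverse inequality, I would take a subspace $V$ of dimension $d$ meeting each $c_i$ in at least $d - k - 1$ dimensions, and produce a tuple in $\Mck$ whose back-projected intersection has dimension at least $d$. The idea: choose a $k$-plane $P$ inside $V$ that is generic in $V$; then $P$ avoids each $c_i \wedge V$ provided $\dim(c_i \wedge V) \le \dim V - k - 1 = d - k - 1$ — wait, we need $P$ to avoid $c_i \wedge V$, and a generic $k$-plane in a $d$-dimensional space avoids a fixed subspace of dimension $\le d - k - 1$; but we only know $\dim(c_i \wedge V) \ge d - k - 1$, so this needs care. The resolution is that we may replace $V$ by a subspace of the \emph{same} dimension meeting each center in \emph{exactly} $d-k-1$ dimensions if possible, or rather work directly: since $\dim(c_i \wedge V) \ge d-k-1$ but also $c_i \not\supseteq$ any $k$-plane would require... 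Actually the cleanest route is: pick $P \subseteq V$ a $k$-plane with $P \wedge (c_i \wedge V) = \emptyset$ for all $i$; this is possible iff $\dim(c_i \wedge V) \le d - k - 1$, which combined with the hypothesis $\dim(c_i \wedge V) \ge d-k-1$ forces $\dim(c_i \wedge V) = d-k-1$ exactly for those $i$ where it matters — and if some $c_i \wedge V$ has dimension $> d-k-1$ we shrink $V$. So I would first argue we may assume $\dim(c_i\wedge V) = d-k-1$ for every $i$ (by replacing $V$ with a subspace of the same dimension $d$ containing a suitable generic complement, which only decreases intersections with the $c_i$ while a dimension count shows we can keep them at least $d-k-1$; the extremal such $V$ realizing the max $d$ can be taken with equalities). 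Then choose a generic $k$-plane $P \subseteq V$; it avoids each $c_i \wedge V$, hence avoids each $c_i$, so $\Phick(P) = (c_1 \vee P, \ldots, c_n \vee P)$ is defined. Set $H_i := c_i \vee P$. Since $V$ contains $P$ and meets $c_i$ in $d-k-1$ dimensions, $\dim(c_i \vee P) $ restricted appropriately gives $V \subseteq H_i$: indeed $c_i \wedge V$ and $P$ together span a subspace of $V$ of dimension $(d-k-1) + k + 1 = d$ (they are disjoint by genericity of $P$), i.e. all of $V$, so $V = (c_i \wedge V) \vee P \subseteq c_i \vee P = H_i$. Therefore $V \subseteq H_{[n]}$, giving $\uck \ge \dim H_{[n]} \ge d$.

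The main obstacle I anticipate is the reduction step in the second half: justifying that the supremum over subspaces $V$ with $\dim(c_i \wedge V) \ge \dim V - k - 1$ is attained by a $V$ for which we can find a $k$-plane $P \subseteq V$ disjoint from every $c_i \wedge V$. If $\dim(c_i \wedge V)$ is strictly larger than $\dim V - k - 1$ for some $i$, then every $k$-plane in $V$ meets $c_i$, and $\Phick$ is undefined on all of them. One must argue that in that case one can pass to a subspace $V' \subsetneq V$ of strictly smaller dimension still satisfying the hypothesis — but that decreases $d$, so instead one should argue that the \emph{maximal} such $V$ automatically has $\dim(c_i \wedge V) = \dim V - k - 1$ whenever $c_i \wedge V$ is the obstruction; this is a maximality/genericity argument that needs to be spelled out carefully, possibly invoking \Cref{le: int + join} and a dimension count showing that if all intersections were too large one could enlarge $V$. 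Once that reduction is in hand, the rest is routine linear algebra using the lemmas of \Cref{s: Geo Int Joins}.
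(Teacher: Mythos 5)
Your first inequality ($\uck\le d$) is correct and matches the paper's argument: take $V=H_{[n]}$ for a tuple attaining $\uck$ and apply \Cref{le: int + join}.

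The second inequality, however, has a genuine gap, and it is exactly the one you flagged: your strategy requires a $k$-plane $P\subseteq V$ that avoids every $c_i\wedge V$, which forces $\dim(c_i\wedge V)\le \dim V-k-1$, and your proposed repair --- that the maximal $V$ can be replaced by one of the same dimension with equality for every $i$ --- is false. Concretely, take $N=4$, $k=0$, $c_1$ a plane, and $c_2,c_3$ two distinct points inside $c_1$. The condition $\dim(c_i\wedge V)\ge \dim V-1$ for a line $V$ forces $c_2,c_3\in V$, so $V=c_2\vee c_3$ is the \emph{unique} maximal subspace, and it lies entirely inside $c_1$, so $\dim(c_1\wedge V)=1>\dim V-1$ and \emph{every} point of $V$ meets $c_1$. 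No element of $V$ is in the domain of $\Phi_{\ca,0}$, so no tuple in the image of $\Phi_{\ca,0}$ can witness $\upsilon_{\ca,0}\ge 1$; yet the proposition is still true here. The resolution is that the witnessing tuple need only lie in the \emph{closure} $\Mck$ of the image, not in the image itself. The paper's proof takes $P$ generic in $V$ (allowing $P$ to meet centers), notes that $(c_i\wedge V)\vee P=V$ for each $i$, approximates $P$ by a sequence $P^{(a)}\to P$ of $k$-planes meeting no center, and passes to a convergent subsequence of the tuples $H^{(a)}_i=c_i\vee P^{(a)}$; each limit $H_i$ contains both $c_i\wedge V$ and $P$, hence contains $V$, and the limit tuple lies in $\Mck$. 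Without this limiting step (or an equivalent device), your argument only covers the case where the extremal $V$ happens to admit a $k$-plane disjoint from all centers, which is not the general case.
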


\begin{proof} First, let $H\in \Mck$ be such that $\dim H_{[n]}=\uck$. Since $H_{[n]}\subseteq H_i$ for every $i$, we have that $H_{[n]}$ meets $c_i$ in at least $\dim H_{[n]}-k-1$ dimensions.

Second, let $V$ be a subspace that meets each center in at least $\dim V-k-1$ dimensions. We show that $\uck\ge \dim V$. By \Cref{le: int + join} and assumption,
\begin{align}
    \dim c_i\vee V=\dim c_i+\dim V-\dim c_i\wedge V\le \dim c_i+k+1.
\end{align}
Let $P$ be a generic $k$-plane in $V$. Then, since $\dim c_i\wedge V\ge \dim V-k-1$, we must have $(c_i\wedge V)\vee P= V$. 
Let $P^{(a)}\to P$ be a sequence of $k$-planes meeting no center and define $H_i^{(a)}:=c_i\vee P^{(a)}$. There is a subsequence such that each $H_i^{(a)}$ converges (in Euclidean topology) and we consider this subsequence. The limits $H_i:=\lim H_i^{(a)}$ contain by construction $(c_i\wedge V)\vee P$ and therefore $V$. Further, $H$ is an element of $\Mck$, showing that $\uck\ge \dim V$.
\end{proof}

Just as we found a formula for $\lck$ in \Cref{s: Tri}, characterizing precisely which multiview varieties are triangulable, we wish to find a formula for $\uck$. We do this using \Cref{prop: uck first} in the case of generic centers. For this, define
\begin{align}\begin{aligned}\label{eq: vck}
  \chi_{\ca,k} &:= \frac{N-1-\sum_{i=1}^n(h_i-k)}{2},\\
    \tau_{\ca,k} &:= \Big\lfloor \chi_{\ca,k} +\sqrt{(k+1)(N-k)+(\chi_{\ca,k}-k)^2}\Big\rfloor,
\end{aligned}
\end{align}
where, $\lfloor \alpha \rfloor$ is the largest integer bounded above by $\alpha\in \RR$. 

\begin{theorem}\label{thm: uck} Let $\textit{\textbf c}$ be a generic center arrangement. Then
\begin{align}
      \upsilon_{\ca,k}=\min\Big\{\tau_{\ca,k},\dim c_1+k+1,\ldots,\dim c_n+k+1\Big\}.
\end{align}
\end{theorem}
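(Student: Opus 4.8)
By Proposition~\ref{prop: uck first}, the quantity $\uck$ equals the largest dimension $d$ for which there exists a subspace $V$ of dimension $d$ meeting each center $c_i$ in at least $d-k-1$ dimensions. The plan is to reduce this to a purely dimension-counting problem using genericity, and then to solve that counting problem. First I would record the two obvious obstructions. Since $V \wedge c_i \subseteq c_i$, the inequality $\dim V \wedge c_i \ge d-k-1$ forces $d-k-1 \le \dim c_i$, i.e. $d \le \dim c_i + k + 1$ for each $i$; this gives the terms $\dim c_i + k + 1$ in the minimum. The remaining content is to show that, for generic centers, the largest $d$ admitting such a $V$ subject only to the ``dimension budget'' is exactly $\tau_{N,h_i,k}$ (when this is the binding constraint).

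The heart of the argument is an incidence/dimension count on the appropriate flag-type variety. Fix $d$ and consider the incidence variety of pairs $(V, (W_1,\ldots,W_n))$ where $\dim V = d$, $W_i \subseteq V \wedge c_i$ with $\dim W_i = d-k-1$ (equivalently, $W_i$ is a codimension-$(k+1)$ subspace of $V$ contained in $c_i$). For fixed generic centers $c_i$, a subspace $V$ of dimension $d$ meets $c_i$ in dimension $\ge d-k-1$ precisely on a Schubert-type condition. I would compute the expected dimension of the locus of such $V$ in $\Gr(d,\PP^N)$ by summing the codimensions of the $n$ Schubert conditions: imposing $\dim V \wedge c_i \ge d - k - 1$ has codimension $(\dim c_i + k + 1 - d)(N - \dim c_i - d + (d-k-1)+1)$ — here I would use Lemma~\ref{le: subsp dim formula} and the standard formula that requiring a $d$-plane to meet a fixed $e$-plane in at least a $t$-plane is a condition of codimension $(d-t)(e-t)$ in the Grassmannian, with $e = \dim c_i$, $t = d-k-1$, so codimension $(d-(d-k-1))(\dim c_i - (d-k-1)) = (k+1)(\dim c_i - d + k + 1)$. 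Using $h_i = N - \dim c_i - 1$, this is $(k+1)(N - h_i - 1 - d + k + 1) = (k+1)(N - h_i + k - d)$. The locus of valid $V$ is nonempty for generic centers iff $\dim \Gr(d,\PP^N) = (d+1)(N-d) \ge \sum_{i=1}^n (k+1)(N-h_i+k-d)$ (one must also check the Schubert conditions are in ``general position'' so that intersection is transverse, which follows from genericity of the $c_i$ and Kleiman transversality). Rearranging $(d+1)(N-d) \ge (k+1)\sum_i(N - h_i + k - d)$ is a quadratic inequality in $d$; expanding gives $-d^2 + (N-1)d + N \ge (k+1)\big(\sum_i(h_i-k)\big)(-1)\cdot(\text{sign})$ — carefully, $\sum_i(N+k - h_i - d) = n(N-d) - \sum_i(h_i - k)$, wait, I must be careful: $N - h_i + k - d$, summed, is $\sum_i (N+k-h_i) - nd$. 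The quadratic $d^2 - (N-1)d - N + (k+1)(\sum_i(N+k-h_i) - nd) \le 0$ has largest root, and one checks directly that the floor of that root is exactly $\tau_{N,h_i,k}$ as defined in~\eqref{eq: vck}; this is a routine completion-of-the-square that matches the displayed expression $\tfrac{N-1-\sum(h_i-k)}{2} + \sqrt{(k+1)(N-k) + (\tfrac{N-1-\sum(h_i-k)}{2}-k)^2}$.

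Having established that the maximal $d$ with the Schubert locus nonempty is $\tau := \tau_{N,h_i,k}$, I would finish by combining the two constraints: if $\tau \le \min_i(\dim c_i + k + 1)$ then taking a generic $V$ in the (nonempty) Schubert locus gives a valid subspace of dimension $\tau$ and no larger one exists, so $\uck = \tau$; if instead $\min_i(\dim c_i + k+1) < \tau$, then the budget constraint is not binding and for $d$ up to $\min_i(\dim c_i+k+1)$ the Schubert locus is still nonempty (the inequality $(d+1)(N-d)\ge\ldots$ holds for all smaller $d$ up to $\tau$), while $d > \min_i(\dim c_i+k+1)$ is impossible by the first paragraph; hence $\uck = \min_i(\dim c_i + k+1)$. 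Either way $\uck = \min\{\tau_{N,h_i,k}, \dim c_1 + k + 1, \ldots, \dim c_n + k + 1\}$. I also need the monotonicity remark that the Schubert-nonemptiness inequality, being ``$\le 0$'' for a downward parabola in $d$, holds on an interval containing all small $d$, which legitimizes passing from ``nonempty at $\tau$'' to ``nonempty at $\min_i(\dim c_i+k+1)$'' in the second case.

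The main obstacle I anticipate is the transversality/general-position claim: one must show that for \emph{generic} centers $c_1,\ldots,c_n$, the $n$ Schubert conditions ``$\dim V\wedge c_i \ge d-k-1$'' on $V\in\Gr(d,\PP^N)$ intersect in the expected dimension (and in particular that the intersection is nonempty exactly when the dimension count is nonnegative, not merely when it is ``large''). This should follow from Kleiman's transversality theorem applied to the action of $\mathrm{PGL}_{N+1}$ moving the centers, together with the fact that each individual Schubert variety is irreducible of the stated codimension; but one has to be slightly careful because the conditions involve a common $V$ and the Schubert varieties live in the same Grassmannian, so the relevant statement is that generic translates of irreducible subvarieties of $\Gr(d,\PP^N)$ meet properly. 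A secondary, purely bookkeeping, obstacle is verifying that the largest root of the quadratic matches the closed-form $\tau_{N,h_i,k}$ exactly, including the floor; this is mechanical but must be done carefully to get the ``$-k$'' shift inside the square root right.
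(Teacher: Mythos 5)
Your overall strategy is exactly the paper's: reduce via \Cref{prop: uck first} to finding the largest $d$ for which the Schubert-type locus $\{V\in\Gr(d,\PP^N):\dim V\wedge c_i\ge d-k-1 \text{ for all }i\}$ is nonempty, use genericity to intersect in the expected dimension, and extract $\tau_{N,h_i,k}$ as the floor of the largest root of the resulting quadratic, with the terms $\dim c_i+k+1$ coming from the trivial bound $\dim V\wedge c_i\le\dim c_i$. However, your codimension computation is wrong, and this is not a bookkeeping slip but a substantive error that changes the quadratic. The condition $\dim V\wedge c_i\ge t$ on $V\in\Gr(d,\PP^N)$ with $\dim c_i=e$ has codimension $(t+1)(N-e-d+t)$, not $(d-t)(e-t)$. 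With $t=d-k-1$ this gives codimension $(d-k)(N-e-1-k)=(d-k)(h_i-k)$, which is what the paper uses (its $\mathcal U_{s,i}$ has dimension $(s-k)(\dim c_i+k+1-s)+(k+1)(N-s)$, i.e.\ codimension $(s-k)(h_i-k)$ in $\Gr(s,\PP^N)$). Your formula yields $(k+1)(\dim c_i+k+1-d)$ instead. A quick sanity check: lines in $\PP^4$ meeting a fixed line ($N=4$, $d=1$, $e=1$, $k=0$, $t=0$) form a codimension-$2$ locus in the $6$-dimensional $\Gr(1,\PP^4)$, whereas your formula predicts codimension $1$.

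The error propagates: your total codimension $(k+1)\sum_i(N-h_i+k-d)$ is linear in $d$ with slope $-(k+1)n$, whereas the correct total $(d-k)\sum_i(h_i-k)$ has slope $+\sum_i(h_i-k)$. The nonemptiness inequality is therefore $(d+1)(N-d)\ge(d-k)\sum_i(h_i-k)$, whose largest root is $\tfrac{N-1-\sum(h_i-k)}{2}+\sqrt{N+k\sum(h_i-k)+\bigl(\tfrac{N-1-\sum(h_i-k)}{2}\bigr)^2}$, and one checks this radicand equals $(k+1)(N-k)+\bigl(\tfrac{N-1-\sum(h_i-k)}{2}-k\bigr)^2$, matching \Cref{eq: vck}. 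Your quadratic has linear coefficient $N-1+n(k+1)$ rather than $N-1-\sum_i(h_i-k)$, so its largest root does not reproduce $\tau_{N,h_i,k}$; the claimed ``routine completion-of-the-square'' verification would fail. The rest of your argument (the two-case analysis depending on whether $\tau$ or $\min_i(\dim c_i+k+1)$ is binding, the observation that the inequality holds on an interval of $d$ containing $k$, and the transversality justification via genericity of the centers) is sound and mirrors the paper, but it all rests on the corrected count.
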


\begin{proof} Consider the set $\mathcal U_{s,i}$ of $s$-planes $V$ that meet $c_i$ in at least $s-k-1$ dimensions for $s$ with $k\le s\le \dim c_i+k+1$. We have
\begin{align}\begin{aligned}
    \dim\;\mathcal U_{s,i}&=(s-k)(\dim c_i+k+1-s)+(k+1)(N-s).   
\end{aligned}
\end{align}
Its codimension in $\Gr(s,\PP^N)$ is $(s-k)(h_i-k)$ by the multiview identities. By \cite[Chapter 4]{eisenbud-harris:16}, the genericity of $c_i$ implies 
\begin{align}\label{eq: cap Usi}
    \dim \bigcap_{i=1}^n\mathcal U_{s,i}=\max \{-1,(s+1)(N-s)-(s-k)\sum_{i=1}^n(h_i-k)\}.
\end{align}
We write $\mathcal U_s$ for the intersection of all $\mathcal U_{s,i}$. We are looking to find the largest $s$ such that $\dim\; \mathcal U_s\ge 0$; this $s$ is $\uck$ by \Cref{prop: uck first}. It is equivalent to find the largest integer $s$ such that 
\begin{align}
    (s+1)(N-s)-(s-k)\sum_{i=1}^n(h_i-k)\ge 0,
\end{align}
which we can rewrite to
\begin{align}\label{eq: second deg}
    s^2-\Big(N-1-\sum_{i=1}^n(h_i-k)\Big)s\le N+\sum_{i=1}^nk(h_i-k).
\end{align}
For big $s$, the left-hand side of \eqref{eq: second deg} is greater than the right-hand side, and for $s=0$, the inequality is satisfied. Setting both sides of \eqref{eq: second deg} equal, we get a degree-2 equation in $s$ that has two real solutions. We deduce that non-negative $s$ that satisfy \eqref{eq: second deg} are characterized by
\begin{align}\begin{aligned}\label{eq: 51}
    0\le s\le &\;\frac{N-1-\sum_{i=1}^n(h_i-k)}{2}\;+\\
    &\sqrt{ N+\sum_{i=1}^nk(h_i-k)+\Big(\frac{N-1-\sum_{i=1}^n(h_i-k)}{2}\Big)^2}.
\end{aligned}
\end{align}
The right-hand side of \eqref{eq: 51} equals the expression inside the floor function of $\tau_{\ca,k}$. Then, the largest integer $s$ that satisfies \eqref{eq: 51} is precisely $\tau_{\ca,k}$. We shown that $\uck=\tau$ as long as $k\le \tau\le \dim c_i+k+1$ for each $i$.

We now prove that $\tau_{\ca,k}\ge k$. We do this by putting $s=k$ in \eqref{eq: cap Usi}. From this we get $\dim \;\mathcal U_s=(s+1)(N-s)\ge 0$, and it follows that \eqref{eq: 51} is satisfied for this choice of $s$. If $\tau \ge \dim c_i+k+1$ for some $i$, let $s$ be the minimum among $\dim c_i+k+1$. We show that $\uck=s$. The integers $s$ satisfies \eqref{eq: 51} and therefore $\uck\ge s$. Conversely, by \eqref{eq: ineqs u l}, $\uck$ is bounded from above by $s$, meaning $\uck=s$.
\end{proof}

In order to examplify this result, we look at a few familiar cases, namely the standard point multiview and line multiview varieties. In particular, we see that we really need to assume that the center arrangement $\ca$ is generic for \Cref{thm: uck} to hold; it does not suffice to assume disjoint or pseudo-disjoint centers. 

\begin{example} Consider a linear projection $\PP^3\dashrightarrow (\PP^2)^n$, given a generic arrangement $\textit{\textbf c}$ of point centers. For $k=0$, we have \begin{align}
    \tau=\lfloor1-n+ \sqrt{3+(1-n)^2}\rfloor.
\end{align}
For $n=1,2$, $\tau$ equals $1$, while for $n\ge 3$, $\tau$ equals $0$. Since all back-projected planes are lines in this case, $\upsilon_{\ca,0}=\tau$ by \Cref{thm: uck}. This results makes sense with \Cref{prop: uck first}, which tells us that $\upsilon_{\ca,0}=0$ if and only if the centers of $\ca$ are not contained in a line.

Observe that given three point centers that do lie on a line $L$, we get $\uck=1$, because for any point $X\in L$ meeting no centers, $c_i\vee X=L$.\hspace*{\fill}$\diamondsuit\,$   
\end{example}

\begin{example} Consider a linear projection $\PP^3\dashrightarrow (\PP^2)^n$, given a generic arrangement $\textit{\textbf c}$ of point centers. For $k=1$, we have  \begin{align}
    \tau=\Big\lfloor\frac{2-n}{2}+ \sqrt{4+\Big(\frac{2-n}{2} -1\Big)^2}\Big\rfloor.
\end{align}
For $n=1,2,3$, $\tau$ equals $2$, while for $n\ge 4$, $\tau$ equals $1$. Since all back-projected planes are of dimension 2 in this case, $\uck=\tau$. Using \Cref{prop: uck first} directly, we have that $\upsilon_{\ca,1}=1$ if and only if the centers of $\ca$ are not coplanar.\hspace*{\fill}$\diamondsuit\,$  
\end{example}

\begin{example} $\uck$ is not always equal to $\tau$. Indeed, consider the projection $\PP^N\dashrightarrow \PP^{N-1}$, given one point center. Then $\tau=\lfloor \sqrt{N}\rfloor$. However, $\uck=1$, because that is the dimension of a single back-projected line.     \hspace*{\fill}$\diamondsuit\,$ 
\end{example}








\bibliographystyle{alpha}
\bibliography{VisionBib}



\end{document}